\newtheorem{thm}{Theorem}[section]
\newtheorem{cor}[thm]{Corollary}
\newtheorem{lem}[thm]{Lemma}
\newtheorem{prop}[thm]{Proposition}
\newtheorem{fact}[thm]{Fact}
\theoremstyle{definition}
\newtheorem{defn}[thm]{Definition}
\theoremstyle{remark}
\newtheorem{remark}[thm]{Remark}
\newtheorem{remarks}[thm]{Remarks}
\newtheorem{example}[thm]{Example}
\numberwithin{equation}{section}
\newcommand{\delete}[1]{} 
\newcommand{\nt}{\noindent}
\def\eps{{\varepsilon}}
\newcommand{\ep}{\varepsilon}
\newcommand{\sk}{\vskip 0.2cm}
\newcommand{\nl}{\newline}
\newcommand{\ben}{\begin{enumerate}}
\newcommand{\een}{\end{enumerate}}
\newcommand{\bit}{\begin{itemize}}
\newcommand{\eit}{\end{itemize}}
\newcommand{\rest}{\upharpoonright}
\def\R {{\mathbb R}}
\def\N {{\mathbb N}}
\def\Z {{\mathbb Z}}
\def\calB{\mathcal B}
\def\norm#1{\left\Vert#1\right\Vert}
\def\Iso{{\mathrm{Iso}}\,}
\def\diam{{\mathrm{diam}}}
\def\QED{\nobreak\quad\ifmmode\roman{Q.E.D.}\else{\rm Q.E.D.}\fi}
\def\sbs{\subset}
\def\a{\alpha}
\def\g{\gamma}
\newcommand{\br}{\vspace{3 mm}}
\newcommand{\cls}{{\rm{cls\,}}}
\newcommand{\Acal}{\mathcal{A}}
\newcommand{\co}{\rm{co}}
\newcommand{\card}{\rm{card\,}}
\begin{document}

\title[]
{Representations of dynamical systems on Banach spaces
not containing $l_1$}

\author[]{E. Glasner}
\address{Department of Mathematics, Tel-Aviv University,
Tel Aviv, Israel} \email{glasner@math.tau.ac.il}
\urladdr{http://www.math.tau.ac.il/$^\sim$glasner}

\author[]{M. Megrelishvili}
\address{Department of Mathematics,
Bar-Ilan University, 52900 Ramat-Gan, Israel}
\email{megereli@math.biu.ac.il}
\urladdr{http://www.math.biu.ac.il/$^\sim$megereli}

\date{May 17, 2012}

\keywords{Baire one function, Banach representation of
dynamical systems,
enveloping semigroup,
fragmentability,
Rosenthal's dichotomy, Rosenthal's compact, Tame system}

\begin{abstract}
For a topological group $G$,
we show that a compact metric $G$-space is tame if and only if it
can be linearly represented on a separable Banach space which does
not contain an isomorphic copy of $l_1$
(we call such Banach spaces, Rosenthal spaces).
With this goal in mind we study tame dynamical systems
and their representations on Banach spaces.
\end{abstract}

\thanks{Research partially supported by BSF (Binational USA-Israel)
grant no. 2006119.}

\thanks{{\em 2000 Mathematical Subject Classification} 37Bxx, 54H20,
54H15, 46xx}

\maketitle
\tableofcontents

\section{Introduction}

\subsection{Some important dichotomies}

Rosenthal's celebrated dichotomy theorem asserts that every bounded
sequence in a Banach space either has a weak Cauchy subsequence or
admits
a subsequence equivalent to the unit vector basis of $l_1$ (an
$l_1$-\emph{sequence}).
Thus, a Banach space $V$ does not contain an  $l_1$-sequence
(equivalently, does not contain an isomorphic copy of $l_1$) if
and only if every bounded sequence in $V$ has a weak-Cauchy
subsequence \cite{Ros0}. 
In the present work we
call a Banach space satisfying these
equivalent conditions a {\em Rosenthal space}.

The theory of Rosenthal spaces is one of the cases where the
interplay between Analysis and Topology gives rise to many deep
results. Our aim is to show the relevance of Topological Dynamics
in this interplay. In particular, we examine
when a dynamical system can be represented on a Rosenthal space,
and show that being {\em tame} is a complete characterization of
such systems.

First we recall some results and ideas. The following
dichotomy is a version of a result of Bourgain, Fremlin and Talagrand
\cite{BFT} (as presented in the book of Todor\u{c}evi\'{c}
\cite{To-b}, Proposition 1 of Section 13).

\begin{fact} [BFT dichotomy]\label{f:BFT}
Let $X$ be a Polish space and
let $\{f_n\}_{n=1}^\infty \subset C(X)$ be a sequence of real
valued
functions which is {\em pointwise bounded}
(i.e. for each $x\in X$ the sequence $\{f_n(x)\}_{n=1}^\infty$ is bounded in $\R$).
Let $K$ be the pointwise closure of $\{f_n\}_{n=1}^\infty$
in $\R^X$. Then either $K \subset \calB_1(X)$, where $ \calB_1(X)$
denotes the space of all real valued Baire 1 functions on $X$,
or $K$ contains a homeomorphic copy of $\beta\N$.
\end{fact}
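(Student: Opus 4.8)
\noindent\emph{Sketch.}\ The plan is to prove the contrapositive: supposing $K\not\sbs\calB_1(X)$, I will exhibit a homeomorphic copy of $\beta\N$ inside $K$. Fix a function $f\in K\setminus\calB_1(X)$; since each $f_n\in C(X)\sbs\calB_1(X)$ we have $f\ne f_n$ for every $n$, and hence (closures of unions being unions of closures, and finite sets being closed in $\R^X$) $f$ lies in the pointwise closure of every tail $\{f_n:n\ge N\}$. By the classical Baire characterization of Baire class one functions, $f\notin\calB_1(X)$ forces the existence of a nonempty closed $P\sbs X$ on which $f$ is discontinuous at every point. Inside the Polish space $P$ one then runs a Baire-category argument: $P$ is covered by the countably many relatively closed sets $D_{a,b}$ ($a<b$ rational), where $D_{a,b}$ consists of the points of $P$ each of whose neighbourhoods meets both $\{f<a\}$ and $\{f>b\}$ --- the cover being exhaustive precisely because $f\rest P$ is nowhere continuous --- so some $D_{a_0,b_0}$ has nonempty interior $Q$ in $P$. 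Thus $\{q\in Q:f(q)<a_0\}$ and $\{q\in Q:f(q)>b_0\}$ are both dense in the nonempty set $Q\sbs X$.

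\noindent The heart of the proof --- and the step I expect to be the main obstacle --- is to turn this oscillation into an \emph{independent} subsequence of $\{f_n\}$ whose witnesses remain in a compact set. Fix reals $a_0<a'<b'<b_0$ and $\de>0$ with $a_0+\de\le a'$ and $b_0-\de\ge b'$, together with a complete compatible metric on $X$. I would construct recursively a strictly increasing sequence $n_1<n_2<\cdots$ and points $x_s\in Q$, indexed by the finite $0$--$1$ strings $s$ (write $si$ for the string $s$ followed by $i$), so that $f_{n_k}(x_s)<a'$ when $s(k)=0$ and $f_{n_k}(x_s)>b'$ when $s(k)=1$, for all $k\le|s|$. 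At stage $m$, the set $V_s$ on which the already chosen $f_{n_1},\dots,f_{n_m}$ realize the pattern $s$ is relatively open in $Q$ and contains $x_s$, hence meets each of the two dense sets above; inside $V_s$ and within distance $2^{-m}$ of $x_s$ one picks a witness with $f<a_0$ (to become $x_{s0}$) and one with $f>b_0$ (to become $x_{s1}$), and then chooses $f_{n_{m+1}}$ from a tail of $\{f_n\}$ lying within $\de$ of $f$ at the finitely many new witnesses, which extends every pattern by one further coordinate. The distance control forces $(x_{\sigma\rest m})_m$ to be Cauchy along every branch $\sigma\in2^\om$; writing $x_\sigma$ for its limit, the set $Z:=\{x_s:s\}\cup\{x_\sigma:\sigma\}$ is the continuous image of the compact space of all finite and infinite binary strings, hence is a compact subset of $X$. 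By construction the compact sets $A_k:=\{z\in Z:f_{n_k}(z)\le a'\}$ and $B_k:=\{z\in Z:f_{n_k}(z)\ge b'\}$ satisfy $A_k\cap B_k=\varnothing$ and form an \emph{independent} family: $\bigcap_{k\in I}A_k\cap\bigcap_{k\in J}B_k\ne\varnothing$ for all disjoint finite $I,J\sbs\N$, an appropriate $x_s$ witnessing each such intersection.

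\noindent It remains to read off the copy of $\beta\N$. Pointwise boundedness makes $K$ a compact subspace of $\R^X$; for an ultrafilter $p$ on $\N$ put $g_p:=\lim_{k\to p}f_{n_k}\in K$, the pointwise limit. The map $p\mapsto g_p$ from $\beta\N$ to $K$ is continuous, being coordinatewise the Stone--\v{C}ech extension of $k\mapsto f_{n_k}(x)$. It is injective: given $p\ne q$, choose $S\in p$ with $\N\setminus S\in q$; the family $\{A_k:k\in S\}\cup\{B_k:k\notin S\}$ has the finite intersection property, so by compactness of $Z$ it has a common point $x^{*}\in Z$, and then $g_p(x^{*})\le a'<b'\le g_q(x^{*})$, whence $g_p\ne g_q$. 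A continuous injection of the compact space $\beta\N$ into the Hausdorff space $K$ is a topological embedding, so $K$ contains a homeomorphic copy of $\beta\N$, as required.

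\noindent For perspective: by Rosenthal's theorem recalled at the beginning, the first alternative $K\sbs\calB_1(X)$ amounts to the sequence $\{f_n\}$ having no $\ell_1$-subsequence, so this dichotomy is a topological-dynamics manifestation of Rosenthal's $\ell_1$-dichotomy.
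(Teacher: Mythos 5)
The paper states this result as a Fact and gives no proof, only citations to Bourgain--Fremlin--Talagrand and to Todor\v{c}evi\'{c}'s book, so there is no internal argument to compare against; your proposal is a correct, essentially self-contained reconstruction of the standard proof from those sources (Baire's characterization producing a closed set of everywhere-discontinuity, a category argument yielding two dense oscillation sets, a tree construction giving an independent family of closed subsets of a compact set, and the ultrafilter-limit embedding of $\beta\N$). The one step I would ask you to write out in a full version is the compactness of $Z$: either topologize the set of finite and infinite binary strings concretely (e.g.\ as a closed subspace of $\{0,1,2\}^{\N}$ by appending an infinite string of $2$'s to each finite string) and verify continuity of $s \mapsto x_s$ from your $2^{-m}$ distance control, or show directly that $Z$ is totally bounded and closed in the complete metric; the estimates you set up suffice for either route. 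The remaining points -- in particular that $f$, being distinct from every $f_n$, lies in the pointwise closure of every tail, which is what legitimizes choosing $f_{n_{m+1}}$ with arbitrarily large index $\delta$-close to $f$ on the finitely many new witnesses -- all check out.
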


In \cite[Theorem 3.2]{GM} the following dynamical dichotomy, in the
spirit of Bourgain-Fremlin-Talagrand theorem, was established.

\begin{fact}[A dynamical BFT dichotomy]\label{f:D-BFT}
Let $X$ be a
compact
metric dynamical $G$-system and let $E(X)$ be its
enveloping semigroup. We have the following dichotomy. Either
\begin{enumerate}
\item
$E(X)$ is a separable Rosenthal compactum, hence with cardinality \nl
${\card}{E(X)} \leq 2^{\aleph_0}$; \ or
\item
$E(X)$
contains a homeomorphic copy of $\beta\N$,
hence ${\card}{E(X)} = 2^{2^{\aleph_0}}$.
\end{enumerate}
\end{fact}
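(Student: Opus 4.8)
The plan is to derive this dynamical dichotomy from the purely topological Bourgain--Fremlin--Talagrand dichotomy (Fact~\ref{f:BFT}), applied one coordinate at a time, after first replacing the (possibly large) acting group $G$ by a \emph{countable} family of self-maps of $X$. For the reduction, note that each $g\in G$ acts on $X$ as a homeomorphism, so the action yields a map of $G$ into $\Homeo(X)$; equipped with the topology of uniform convergence, $\Homeo(X)$ is a separable metrizable space (it is a subspace of $C(X,X)$, and $X$ is compact metrizable). Let $H_0$ be the uniform closure in $\Homeo(X)$ of the image of $G$, and let $\{g_n\}_{n\in\N}$ be a countable dense subset of $H_0$. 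Since uniform convergence implies pointwise convergence, a routine verification shows that the closure of $\{g_n\}$ inside $X^X$ (with the product topology) coincides with the closure of $G$, i.e.\ with $E(X)$. Hence $E(X)=\cls\{g_n : n\in\N\}$ is separable, and it is the pointwise closure of a \emph{sequence} of continuous maps $X\to X$. Fix also a countable family $\{h_j\}_{j\in\N}\sbs C(X)$ separating the points of $X$ (e.g.\ $h_j=d(\cdot,x_j)$ for a dense sequence $\{x_j\}$ in $X$).

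Now run Rosenthal's dichotomy on each bounded sequence $\{h_j\circ g_n\}_{n}\sbs C(X)$. \emph{Case (1): for every $j$, the sequence $\{h_j\circ g_n\}_n$ has no $l_1$-subsequence.} Applying Fact~\ref{f:BFT} to it on the Polish space $X$, its pointwise closure $K_j\sbs\R^X$ either lies in $\calB_1(X)$ or contains a copy of $\beta\N$; the latter would force the presence of an $l_1$-subsequence (the standard Rosenthal--Bourgain--Fremlin--Talagrand equivalences, using that $\calB_1(X)$ with the pointwise topology is angelic), contradicting the hypothesis. So $K_j\sbs\calB_1(X)$ for all $j$. The map $\Phi\colon E(X)\to\prod_j\R^X$, $p\mapsto(h_j\circ p)_j$, is continuous and, since $\{h_j\}$ separates points of $X$, injective, hence a topological embedding ($E(X)$ being compact). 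Its image lies in $\prod_j\calB_1(X)$, which is naturally identified with $\calB_1(X\times\N)$, and $X\times\N$ is Polish; thus $E(X)$ is a separable Rosenthal compactum. Since every Baire-$1$ function on a separable metric space is a pointwise limit of a sequence of continuous functions, and there are at most $2^{\aleph_0}$ such sequences, ${\card}{E(X)}\le 2^{\aleph_0}$.

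\emph{Case (2): for some $j$ there is a subsequence $\{f_k\}:=\{h_j\circ g_{n_k}\}_k$ that is an $l_1$-sequence.} Applying Fact~\ref{f:BFT} to $\{f_k\}$, its pointwise closure $K\sbs\R^X$ cannot be contained in $\calB_1(X)$: otherwise $\{f_k\}$ would be relatively compact in the angelic space $\calB_1(X)$ and would therefore have a pointwise-convergent---hence weak-Cauchy---subsequence, which no $l_1$-sequence admits. So $K$ contains a topological copy $Z$ of $\beta\N$. The map $\pi\colon E(X)\to\R^X$, $p\mapsto h_j\circ p$, is continuous and $\pi(E(X))\supseteq\cls\{f_k:k\in\N\}=K\supseteq Z$; hence the compact set $C:=\pi\obr(Z)\sbs E(X)$ maps continuously \emph{onto} $Z\cong\beta\N$. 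As $\beta\N$ is extremally disconnected, hence projective in the category of compact Hausdorff spaces, this surjection has a continuous section, whose image is a copy of $\beta\N$ sitting inside $E(X)$. Finally $|\beta\N|=2^{2^{\aleph_0}}$ while $|E(X)|\le|X^X|\le(2^{\aleph_0})^{2^{\aleph_0}}=2^{2^{\aleph_0}}$, so ${\card}{E(X)}=2^{2^{\aleph_0}}$. Cases (1) and (2) are mutually exclusive and exhaust all possibilities, so the dichotomy follows.

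I expect the main obstacle to be the last step of Case (2): passing from a copy of $\beta\N$ found inside a single coordinate projection $\pi(E(X))$ to a genuine copy of $\beta\N$ inside $E(X)$ itself, which is exactly where the projectivity (Gleason's theorem) of $\beta\N$ is essential. A second point that must be handled with care---though it is more routine---is the initial reduction to a \emph{sequence} $\{g_n\}$ of maps, which is what makes Fact~\ref{f:BFT} (a statement about sequences) applicable and makes the separability of $E(X)$ transparent; the remaining arguments are bookkeeping with the Rosenthal/Bourgain--Fremlin--Talagrand equivalences.
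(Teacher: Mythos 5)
This statement appears in the paper only as a quoted Fact with a citation to \cite[Theorem 3.2]{GM}; no proof is given here, so there is nothing internal to compare against. Your argument is correct and is essentially the standard derivation used in the cited source: reduce to a countable uniformly dense set of translations so that $E(X)$ is the pointwise closure of a sequence of continuous maps, apply the BFT dichotomy coordinatewise through a countable point-separating family, and in the bad case pull the copy of $\beta\N$ back from $\pi(E(X))$ into $E(X)$ itself via Gleason's theorem (projectivity of extremally disconnected compacta) --- which is indeed the one genuinely non-routine step, exactly as you flag.
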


In \cite{Gl-tame} a compact metric dynamical system is called {\em tame}
if the first alternative occurs, i.e. $E(X)$ is a Rosenthal
compactum. By \cite{BFT} every Rosenthal compactum is a Fr\'echet
space (and in particular its topology is determined by the
converging sequences). Thus, either $E(X)$ (although not
necessarily metrizable) has a nice topological structure, or it is
as unruly as possible containing a copy of $\beta\N$. As to the
metrizability of $E(X)$, recent results \cite{GM} and \cite{GMU}
assert that $E(X)$ is metrizable iff the metric compact
$G$-space $X$ is hereditarily non-sensitive (HNS), iff
$X$ is Asplund representable (see Section \ref{subs:RN}).
A Banach space $V$ is an {\em Asplund\/} space if
the dual of every separable Banach subspace is separable (see Remarks \ref{r:fr}.4).
Reflexive spaces and spaces of the type $c_0(\Gamma)$
are Asplund.

\subsection{The main results and related facts}

The main result of the present work is the following:

\begin{thm} \label{t:main-int}
Let $X$ be a compact metric $G$-space.
The following conditions are equivalent:
\begin{enumerate}
\item $(G,X)$ is a tame $G$-system.
\item $(G,X)$ is representable on a separable Rosenthal
Banach space.
\end{enumerate}
\end{thm}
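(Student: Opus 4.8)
The plan is to route both implications through the subalgebra $\mathrm{Tame}(X)\subseteq C(X)$ consisting of those $f$ for which the orbit $fG$ contains no $l_1$-sequence --- equivalently, by Rosenthal's theorem \cite{Ros0}, for which every sequence in $fG$ has a weak-Cauchy subsequence in $C(X)$. One first invokes the standard facts that $\mathrm{Tame}(X)$ is a norm-closed, $G$-invariant, unital subalgebra of $C(X)$ (it contains the constant functions, whose orbits are singletons), and that
\[
(G,X)\ \text{is tame}\ \Longleftrightarrow\ \mathrm{Tame}(X)=C(X).
\]
For the implication $(\Leftarrow)$ here: if every $f\in C(X)$ has an $l_1$-free orbit then, by the BFT dichotomy (Fact \ref{f:BFT}), each pointwise closure $\overline{fG}\subseteq\R^X$ consists of Baire 1 functions and contains no copy of $\beta\N$, and feeding this into the dynamical BFT dichotomy (Fact \ref{f:D-BFT}) rules out $\beta\N\hookrightarrow E(X)$, so $E(X)$ is a Rosenthal compactum, i.e. $(G,X)$ is tame. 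For $(\Rightarrow)$: if $E(X)$ is a Rosenthal compactum, then for each $f\in C(X)$ the set $\overline{fG}\subseteq\R^X$ is the continuous image $\{f\circ p:p\in E(X)\}$ of $E(X)$, hence again a Rosenthal compactum of Baire 1 functions, so $fG$ contains no $l_1$-sequence.

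Granting this reduction, $(2)\Rightarrow(1)$ is short. Let $\alpha\colon X\to(B_{V^*},w^*)$ be a $G$-embedding, $V$ a separable Rosenthal space, and $h\colon G\to\Iso(V)$ the associated co-representation. For $v\in V$ put $f_v:=\langle v,\alpha(\cdot)\rangle\in C(X)$; then $f_v\,g=\langle h(g)v,\alpha(\cdot)\rangle$ (up to the usual adjoint convention), so the orbit $f_vG$ is the image of $h(G)v\subseteq V$ under the bounded operator $w\mapsto\langle w,\alpha(\cdot)\rangle\colon V\to C(X)$. Since $h(G)v$ contains no $l_1$-sequence and bounded operators take weakly precompact sets to weakly precompact sets, $f_vG$ is $l_1$-free, so $f_v\in\mathrm{Tame}(X)$. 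Because $\alpha$ is injective the family $\{f_v:v\in V\}$ separates the points of $X$, so the closed unital subalgebra $\mathrm{Tame}(X)$ equals $C(X)$ by Stone--Weierstrass, and $(G,X)$ is tame.

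The work is in $(1)\Rightarrow(2)$. Fix $f\in C(X)=\mathrm{Tame}(X)$ and run the Davis--Figiel--Johnson--Pe\l czy\'nski interpolation construction on the bounded, $G$-invariant, absolutely convex set $\overline{\mathrm{conv}}\bigl(fG\cup(-fG)\bigr)\subseteq C(X)$, exactly as in the Asplund case of \cite{GM}: this yields a separable Banach space $V_f$ (separable since $C(X)$ is), a continuous representation by isometries $h_f\colon G\to\Iso(V_f)$, a $G$-map $\alpha_f\colon X\to B_{V_f^*}$, and a vector $v_f\in V_f$ with $f=\langle v_f,\alpha_f(\cdot)\rangle$ --- so the cyclic subsystem of $X$ generated by $f$ is $V_f$-representable. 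Now pick a countable norm-dense set $\{f_n\}_{n\ge1}\subseteq C(X)$ (possible as $X$ is metrizable), rescale so that $\sum_n\|f_n\|_\infty<\infty$, and set $V:=\bigl(\sum_n\oplus V_{f_n}\bigr)_{c_0}$ (separable, being a countable $c_0$-sum of separable spaces), with $h:=\bigoplus_n h_{f_n}\colon G\to\Iso(V)$ and $\alpha:=(\alpha_{f_n})_n\colon X\to B_{V^*}=\bigl(\sum_n\oplus V_{f_n}^*\bigr)_{l_1}$. Then $\alpha$ is a $G$-equivariant continuous injection (injective because the $f_n$ separate points), hence, $X$ being compact, a topological embedding into $(B_{V^*},w^*)$; thus $(G,X)$ is representable on the separable space $V$, and it remains to check that $V$ contains no copy of $l_1$. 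Since a $c_0$-sum of countably many Rosenthal spaces is again Rosenthal (a routine perturbation/gliding-hump argument), this reduces to showing that each summand $V_f$ is a Rosenthal space.

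This last point is the crux and the main obstacle. Its difficulty is genuine: the property ``containing no $l_1$-sequence'' (weak precompactness) is not in general inherited by closed absolutely convex hulls, so one cannot simply transfer it from the orbit $fG$ to the set $\overline{\mathrm{conv}}\bigl(fG\cup(-fG)\bigr)$, nor from there to $B_{V_f}$. Instead one must exploit the internal structure of the DFJP interpolation space and argue by contradiction: an $l_1$-sequence $(u_k)$ in $V_f$, pushed forward by the canonical maps of $V_f$ into $C(X)$ and into $\R^X$, would force the pointwise closure of $fG$ in $\R^X$ to violate the BFT dichotomy (Fact \ref{f:BFT}) --- producing either a copy of $\beta\N$ or a function outside $\calB_1(X)$ --- contradicting the tameness of $f$. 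Establishing rigorously that \emph{the DFJP-interpolation space attached to a tame orbit is a Rosenthal space} is thus the technical heart of the argument, and is where Rosenthal's $l_1$-theorem, the Bourgain--Fremlin--Talagrand theory of spaces of Baire 1 functions, and the fragmentability calculus underlying the representation machinery have to be combined; a secondary technical point, handled by the standard (Teleman/Megrelishvili) device, is ensuring that the lifted $G$-action on $V_f$ is a genuinely continuous representation for an arbitrary topological group $G$.
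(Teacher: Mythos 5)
Your overall architecture coincides with the paper's: reduce to the statement that every $f\in C(X)$ is tame, run the Davis--Figiel--Johnson--Pe\l czy\'nski interpolation on the symmetrized convex hull of a tame orbit $fG$ to get a separable space $V_f$ representing the cyclic system of $f$, and then sum countably many such representations (the paper uses an $l_2$-sum and the Odell--Rosenthal cardinality criterion where you use a $c_0$-sum; both work). Your treatment of $(2)\Rightarrow(1)$ is correct and is a legitimate alternative to the paper's: you observe that a bounded operator maps the weakly precompact set $h(G)v\subset V$ to a weakly precompact orbit $f_vG\subset C(X)$, then invoke Stone--Weierstrass and the equivalence of regularity with tameness; the paper instead applies the Saab--Saab theorem to show each $f\circ p$ ($p\in E$, $f\in V$) is fragmented on $B^*$ and concludes via the fragmentability calculus (Lemma \ref{simple-fr}.3). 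Your route is arguably more elementary.

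The problem is that in $(1)\Rightarrow(2)$ you explicitly stop at what you yourself call ``the technical heart'': the proof that the DFJP space $V_f$ attached to a tame orbit is Rosenthal. This is left as an acknowledged gap, and the one-sentence contradiction you sketch does not work as stated. If $(u_k)$ were an $l_1$-sequence in $V_f$, its image $j(u_k)$ under the canonical injection $j\colon V_f\to C(X)$ is merely a bounded sequence in $C(X)$; since $j^{-1}$ is unbounded, $j(u_k)$ can perfectly well have a weak-Cauchy subsequence without $(u_k)$ having one, so ``pushing forward'' an $l_1$-sequence does not by itself contradict the BFT dichotomy for $\cls_p(fG)$. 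The paper closes this gap in two genuinely substantive steps that your write-up is missing. First (Proposition \ref{c:conv}), the symmetrized convex hull $W=\co(fG\cup -fG)$ is again a Rosenthal family; contrary to your remark that $l_1$-freeness ``is not in general inherited by convex hulls,'' it is inherited here, but the proof requires the Bourgain--Fremlin--Talagrand convex-hull theorem (Fact \ref{t:BFT2}). Second (Claims 3--6 of Theorem \ref{t:general}), one exploits the DFJP inclusion $j(B_{V_f})\subset\bigcap_n(2^nW+2^{-n}B)$: given any sequence $c_n=2^kw_n^k+2^{-k}b_n^k$ in this intersection, a diagonal argument extracts a subsequence along which each level $\{w_n^k\}_n$ is weak$^*$-convergent in $C(X)^{**}$ (using sequential precompactness of the Rosenthal family $W$ in $\R^{B^*}$), the limits $2^kx_k^{**}$ are shown to be norm-Cauchy, and $c_n$ converges weak$^*$ to their limit. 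Finally the injectivity of $j^{**}$ (DFJP, Lemma 1(iii)) transfers this sequential precompactness from $j(B_{V_f})\subset (C(X)^{**},w^*)$ back to $B_{V_f}\subset(V_f^{**},w^*)$, which by Rosenthal's theorem is exactly the statement that $V_f$ is a Rosenthal space. Without these steps the proof of the main implication is incomplete.
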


This theorem continues a series of recent results which link
dynamical properties of $G$-systems (like WAP and HNS) to their
representability  on ``good" Banach spaces
(Reflexive and Asplund respectively). See Sections
\ref{s:BanHier} and \ref{s:Ban Rep} below for more details.

One of the important questions in Banach space theory until the mid 70's
was how to construct a separable Rosenthal space which is not Asplund.
The first examples were constructed independently by
James \cite{Ja} and Lindenstrauss and Stegall \cite{LiSt}.
In view of Theorem \ref{t:main-int} we now see that a fruitful
way of producing such distinguishing examples comes from dynamical
systems. Just consider a compact metric tame $G$-system which is
not HNS (see e.g.
Remarks \ref{r:old} below)
and then apply Theorem \ref{t:main-int}.

In order to get a better perspective on the position of
tame systems in the hierarchy of dynamical systems
we remind the reader of some enveloping semigroup characterizations.
For a recent review of enveloping semigroup theory we refer to
\cite{G4}.
A compact $G$-space $X$ is WAP
(weakly almost periodic) if and only if its enveloping semigroup
$E(X) \subset X^X$ consists of continuous maps (Ellis and Nerurkar \cite{EN}).
Recently the following characterization of tameness was
established.

\begin{fact} \label{GMUint} \cite{GMU}
A compact metric dynamical $G$-system $X$ is tame if and only if
every element of $E(X)$ is a Baire
class 1 function (equivalently, has the point of continuity property)
from $X$ to itself.
\end{fact}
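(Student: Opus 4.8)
The plan is to derive both implications from the Bourgain--Fremlin--Talagrand machinery (Facts~\ref{f:BFT} and~\ref{f:D-BFT}) via a ``scalarization'' of the enveloping semigroup, using only routine facts from descriptive set theory. Since $X$ is compact metric it is Polish, so for maps $X\to X$ ``Baire class $1$'' is equivalent to the point of continuity property (the classical equivalence already alluded to in the statement); we argue with the Baire class $1$ formulation, recalling that a Baire class $1$ map between metric spaces is by definition a pointwise limit of continuous maps, and that $E(X)$ is the closure in $X^X$ of the set $\{\tilde g:g\in G\}$ of translations $\tilde g(x)=gx$, each of which is continuous.

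\emph{Tameness implies every element of $E(X)$ is Baire class $1$.} If $(G,X)$ is tame then $E(X)$ is a Rosenthal compactum, hence a Fr\'echet space by~\cite{BFT}; therefore its dense subset $\{\tilde g:g\in G\}$ is sequentially dense, so every $p\in E(X)$ is the pointwise limit of a sequence $\tilde g_n$ with $g_n\in G$, i.e. $g_nx\to px$ in $X$ for every $x$. Being a pointwise limit of the continuous maps $\tilde g_n\colon X\to X$, the map $p$ is of Baire class $1$.

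\emph{The converse.} Assume every $p\in E(X)$ is of Baire class $1$. By the dynamical dichotomy (Fact~\ref{f:D-BFT}) it suffices to show that $E(X)$ contains no homeomorphic copy of $\beta\N$. Fix a countable family $\{f_k\}_{k\in\N}\subset C(X)$ that separates the points of $X$ (possible since $X$ is compact metric), and define $\Phi\colon E(X)\to\R^{X\times\N}$ by $\Phi(p)(x,k)=f_k(px)$. Each $f_k$ and each evaluation $p\mapsto px$ being continuous, $\Phi$ is continuous; it is injective because $\{f_k\}$ separates points, and so, $E(X)$ being compact, $\Phi$ is a homeomorphic embedding. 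Now $X\times\N$ is Polish, and each $\Phi(p)$ belongs to $\calB_1(X\times\N)$: on the clopen slice $X\times\{k\}$ the function $\Phi(p)$ agrees with $f_k\circ p$, which is Baire class $1$ on $X$ because $p$ is Baire class $1$ and $f_k$ is continuous, and a function on $X\times\N$ that is Baire class $1$ on every member of the countable clopen partition $\{X\times\{k\}\}_k$ is itself Baire class $1$ (glue the approximating sequences of continuous functions). Hence $\Phi(E(X))$ is a compact subspace of $\calB_1(X\times\N)$ with $X\times\N$ Polish, i.e. a Rosenthal compactum; by~\cite{BFT} it is Fr\'echet, and since $\beta\N$ is not Fr\'echet it contains no copy of $\beta\N$. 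Therefore neither does $E(X)\cong\Phi(E(X))$, and Fact~\ref{f:D-BFT} then forces the first alternative, i.e. $(G,X)$ is tame.

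\emph{Where the difficulty lies.} Once $\Phi$ is in place the proof is short, and the point that needs care is that the reduction genuinely transfers the dichotomy: one must check that $\Phi(E(X))$ lands inside $\calB_1$ of a \emph{Polish} space (not merely inside some abstract $\beta\N$-free compactum), which rests on the two stability properties of Baire class $1$ used above---preservation under post-composition with continuous maps and under gluing over a countable clopen partition---together with the failure of $\beta\N$ to be Fr\'echet. If one prefers not to invoke the Fr\'echet property of $E(X)$ in the first implication, one can instead run the converse argument backwards: tameness makes the pointwise closure of each scalar orbit $\{x\mapsto f_k(gx):g\in G\}\subset C(X)$ a continuous image of the $\beta\N$-free compactum $E(X)$, hence (using norm-separability of $C(X)$ to replace the orbit by a pointwise-dense sequence and applying Fact~\ref{f:BFT}) a subset of $\calB_1(X)$, whence each $f_k\circ p$, and therefore each $p$, is Baire class $1$.
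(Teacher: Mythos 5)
The paper itself gives no proof of this statement; it is quoted as a Fact with a citation to \cite{GMU}, so there is no in-text argument to compare against. Your proof is correct and is essentially the derivation one would extract from the BFT machinery the paper does record: the forward direction is ``Rosenthal compactum $\Rightarrow$ Fr\'echet $\Rightarrow$ every $p\in E(X)$ is a pointwise limit of a sequence of translations, hence Baire~1 by Lemma~\ref{l:Baire1}.1''; the converse is the standard scalarization of $E(X)$ through a countable point-separating family $\{f_k\}\subset C(X)$, which exhibits $E(X)$ as a pointwise compact subset of $\calB_1(X\times\N)$ with $X\times\N$ Polish, i.e.\ as a Rosenthal compactum directly (your detour through the dichotomy and $\beta\N$ is harmless but not needed once the embedding is in hand). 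Two small caveats. First, your aside that a Baire class~1 map between metric spaces is ``by definition'' a pointwise limit of continuous maps is not the paper's definition (preimages of open sets are $F_\sigma$), and the two notions do not coincide for general metric codomains; fortunately you only use the implication from pointwise limits to Baire~1, and in the converse direction it is the $F_\sigma$ formulation (or, equivalently on Polish spaces, fragmentedness) that makes $f_k\circ p$ Baire~1 and makes the gluing over the countable clopen partition go through. Second, the alternative route you sketch at the end --- deducing $\cls_p(f_kG)\subset\calB_1(X)$ from the absence of $\beta\N$ in $E(X)$ --- silently needs the fact that $\beta\N$-freeness passes to continuous images of compact spaces (e.g.\ via projectivity of $\beta\N$, since $E^{f_k}$ is only a continuous image of $E(X)$ and continuous images of compact Fr\'echet spaces need not be Fr\'echet); as written it is not self-contained, but it is only offered as an aside and your main argument does not depend on it.
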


A function $f: X \to Y$ has
the {\em point of continuity property}
if for every closed nonempty subset $A$ of $X$ the restriction
$f|_A: A \to Y$ has a point of continuity. For compact $X$ and
metrizable $Y$ it is equivalent to the \emph{fragmentability} (see
Section \ref{sec:fr} and Lemma \ref{Baire-1}) of the function $f$.
The topological concept of fragmentability comes in fact from
Banach space theory (Jayne-Rogers \cite{JR}). For dynamical applications of
fragmentability we refer to
\cite{Mefr, Meop, Menz, GM, GM-suc, GM-fixed}.

Fact \ref{GMUint} suggests the following general definition.

\begin{defn} \label{d:TAME}
 Let $X$ be a (not necessarily metrizable) compact $G$-space.
 We say that $X$ is
\emph{tame} if for every element $p \in E(X)$ the function $p: X
\to X$ is fragmented.
\end{defn}

The class of tame dynamical systems contains the class of HNS
systems and hence also WAP systems.
Indeed, as we already mentioned, every
function $p: X \to X$ ($p \in E(X)$) is continuous for WAP
systems. As to the HNS systems they can be characterized as those
$G$-systems where the {\em family} of maps
$\{p: X \to X\}_{p \in E(X)}$
is a \emph{fragmented family} (see Definition
\ref{d:fr-family} and Fact \ref{t:HNS} below). In particular, every individual $p: X
\to X$ is a fragmented map. Thus, these enveloping semigroup
characterizations yield a natural hierarchy of the three
classes, WAP, HNS and Tame, dynamical systems.

In \cite{Ko} K\"{o}hler introduced
the definition of regularity for cascades (i.e. $\Z$-dynamical
systems) in terms of
\emph{independent} sequences and,
using results of Bourgain-Fremlin-Talagrand has shown that
her definition can be reformulated in terms of $l_1$-sequences.
Extending K\"{o}hler's definition to arbitrary topological groups
$G$ we say that compact $G$-space $X$ is \emph{regular} if,
for any $f \in C(X)$, the orbit $fG$ does not contain an
$l_1$-sequence (in other words the second
alternative is ruled out in the Rosenthal dichotomy).
As we will see later, in Corollary \ref{c:tame-char}, a $G$-system
is regular if and only if it is tame (for metrizable $X$ this fact
was
established
in \cite{Gl-tame}).

In Theorem \ref{t:X-repres-iff} we give a characterization of
Rosenthal representable $G$-systems. As a particular case (for
trivial $G$) we get a topological characterization of compact
spaces which are homeomorphic to weak$^*$ compact subsets in the
dual of Rosenthal spaces. A well known result characterizes
Rosenthal spaces as those Banach spaces whose dual has the weak
Radon-Nikod\'ym property \cite[Corollary 7.3.8]{Tal}. It is
therefore natural to call such a compact space a \emph{weakly
Radon-Nikod\'ym} compactum (WRN). Theorem \ref{t:WRN} gives a
simple characterization in terms of fragmentability.
Namely,
{\em a compact space $X$ is WRN iff
there exists a
bounded subset $F \subset C(X)$ such that the pointwise closure of
$F$ in $\R^X$ consists of fragmented maps
from $X$ to $\R$
and $F$ separates points of $X$}.

Theorem \ref{t:X-repres-iff} is
related to yet another
characterization of Rosenthal Banach spaces. Precisely,
let $V$ be a Banach space with dual $V^*$ and second dual
$V^{**}$. One may
consider the elements of $V^{**}$ as functions on the
weak$^*$ compact unit ball $B^*:=B_{V^*} \subset V^*$. While the
elements of $V$ are clearly continuous on $B^*$ it is not true in
general for elements from $V^{**}$. By a result of Odell and
Rosenthal \cite{OR}, a \emph{separable} Banach space $V$ is
Rosenthal iff every element $v^{**}$ from $V^{**}$ is a Baire one
function on $B^*$. More generally  E. Saab and P. Saab \cite{SS}
show that $V$ is Rosenthal iff every element of $V^{**}$ has
the point of continuity property when restricted to $B^*$.
Equivalently, every restriction of $v^{**}$ to a bounded subset
$M$ is fragmented as a function $(M, w^*) \to \R$ (see Fact
\ref{t:SS} below).

Answering a question of Talagrand \cite[Problem14-2-41]{Tal},
R. Pol \cite{Pol} gave an example of a separable
compact Rosenthal space $K$ which cannot be
embedded in $\calB_1(X)$ for any compact metrizable $X$.
We say that a compact space $K$ is \emph{strongly Rosenthal} if
it is homeomorphic to a subspace of $\calB_1(X)$ for a compact
metrizable $X$.
We say that a compact space $K$ is
\emph{admissible}
if
there exists a metrizable compact space $X$ and a bounded subset $Z
\subset C(X)$ such that the pointwise closure $\cls_p(Z)$ of $Z$
in $\R^X$ consists of Baire 1 functions and $K \subset \cls_p(Z)$.
Clearly every admissible compactum is strongly Rosenthal.
We do not know whether these two classes of compact spaces coincide.
Note that the enveloping semigroup $K:=E(X)$ of a compact
metrizable $G$-space $X$ is admissible iff
$(G,X)$ is tame (Proposition \ref{p:EisTame}).

As another consequence of our analysis we show that {\em a compact space
$K$ is an admissible Rosenthal compactum iff it is homeomorphic to
a weak$^*$ closed bounded subset in the
second dual of a separable Rosenthal Banach space $V$}
(Theorem \ref{t:tame-comp}).

\begin{remark}
We note that the main results of our work remain true for
semigroup actions once some easy modifications are introduced.
\end{remark}

\begin{remark}
The attentive reader will not fail to detect the major importance
to our work of the papers \cite{DFJP}, \cite{BFT}, and
the book \cite{Tal}.
\end{remark}

\sk
\subsection{The hierarchy of Banach representations}
\label{s:BanHier}

In the following table we encapsulate some features of the
trinity: dynamical systems, enveloping semigroups, and Banach
representations. Let $X$ be a compact metrizable $G$-space and
$E(X)$ denote the corresponding enveloping semigroup. The symbol
$f$ stands for an arbitrary function in $C(X)$ and
$fG = \{f \circ g: g\in G\}$ denotes its orbit.
Finally, $\cls(fG)$ is the pointwise closure of $fG$ in
$\R^X$.


{\tiny{
\begin{table}[h]
\begin{center}
\begin{tabular}{ | l | l| l | l | l | l | }
\hline  & Dynamical characterization &  Enveloping semigroup &
Banach representation\\
\hline  WAP & $\cls(fG)$ is a subset of $C(X)$  &
Every element is continuous & Reflexive \\
\hline HNS & $\cls(fG)$ is metrizable & $E(X)$ is metrizable
& Asplund\\
\hline Tame &  $\cls(fG)$ is Fr\'echet &
Every element is Baire 1 & Rosenthal\\
\hline
\end{tabular}

\sk
\caption{ \protect  The hierarchy of Banach representations}
\end{center}
\end{table}
}}


\section{Topological background: fragmentability and Baire 1 functions}
\label{sec:fr}

Let $X$ be a topological space and $A \subset X$. We say that $A$
is {\em relatively compact} in $X$ if the closure ${\cls}(A)$ is a
compact subset of $X$. We say that $A$ is {\em sequentially precompact}
in $X$ if every sequence in $A$ has a subsequence which converges
in $X$. Compact space will mean compact and Hausdorff.

The following definition is a generalized version of
{\it fragmentability}.

\begin{defn} \label{def:fr}
\cite{JOPV, Mefr} Let $(X,\tau)$ be a topological space and
$(Y,\mu)$ a uniform space. We say that $X$ is {\em $(\tau,
\mu)$-fragmented\/} by a
(typically not continuous)
function $f: X \to Y$ if for every nonempty subset $A$ of $X$ and every entourage $\eps
\in \mu$ there exists an open subset $O$ of $X$ such that $O \cap
A$ is nonempty and the set $f(O \cap A)$ is $\eps$-small in $Y$.
We also say in that case that the function $f$ is {\em
fragmented\/}. Notation: $f \in {\mathcal F}(X,Y)$, whenever the
uniformity $\mu$ is understood. If $Y=\R$ then we write simply
${\mathcal F}(X)$.
\end{defn}

\begin{remarks} \label{r:fr}
\ben
\item
In Definition \ref{def:fr}.1 when $Y=X, f={id}_X$ and $\mu$ is a
metric uniform structure, we get the usual definition of
fragmentability in the sense of Jayne and Rogers \cite{JR}.
Implicitly it already appears in a paper of Namioka and Phelps
\cite{NP}.

\item
It is enough to check the condition of Definition \ref{def:fr}
only for closed subsets $A \subset X$ and
for $\ep \in \mu$ from a {\it subbase} $\gamma$ of $\mu$ (that is,
the finite intersections of the elements of $\gamma$ form a base
of the uniform structure $\mu$).

\item
Namioka's {\it joint continuity theorem} \cite{N-jct} implies that
every weakly compact subset $K$ of a Banach space is
(weak,norm)-fragmented (that is, $id_K: (K,weak) \to (K,norm)$ is
fragmented).
\item
Recall that a Banach space $V$ is an {\em Asplund\/} space if the
dual of every separable Banach subspace is separable, iff every
bounded subset $A$ of the dual $V^*$ is
(weak${}^*$,norm)-fragmented, iff $V^*$ has the Radon-Nikod\'ym
property.
Reflexive spaces and spaces of the type $c_0(\Gamma)$
are Asplund. For more details cf. \cite{Bo, Fa, N}.


\item A Banach space $V$ is Rosenthal if and only if every bounded
subset $A$ of the dual $V^*$ is (weak${}^*$ topology, weak
uniformity)-fragmented. This follows by Proposition \ref{p:crit}.
\een
\end{remarks}

Recall that $f: X \to Y$ is {\em barely continuous\/}, \cite{MN},
if for every nonempty closed subset $A \subset X$, the restricted
map $f\rest_A$ has at least one point of continuity. Following
\cite[Section 14]{Tal} the set of barely continuous functions
$f: X \to \R$ is denoted by $B_r'(X)$.

\begin{lem} \label{simple-fr}
\begin{enumerate}


\item
Every barely continuous
$f$
is fragmented.

\item
Let $\a: X \to Y$ be a continuous map. If $f: Y \to (Z,\mu)$ is a
fragmented map then the composition $f \circ \a: X \to (Z,\mu)$ is
also fragmented.

\item
Let $p: X \to Y$ be a map from a topological space $X$ into a
compact space $Y$. Suppose that $\{f_i: Y \to Z_i\}_{i \in I}$ is
a system of continuous maps from $Y$ into Hausdorff uniform spaces
$Z_i$ such that it separates points of $Y$ and $f_i \circ p \in
{\mathcal F}(X,Z_i)$ for every $i \in I$. Then $p \in {\mathcal F}(X,Y)$.


\item \label{l-quot-fr}
Let $(X,\tau)$ and $(X',\tau')$ be compact spaces, and let
$(Y, \mu)$ and $(Y', \mu')$ be uniform spaces. Suppose that:
$\a: X \to X'$ is a continuous onto map, $\nu: (Y, \mu) \to
(Y', \mu')$ is uniformly continuous, $\phi: X \to Y$ and
$\phi': X' \to Y'$ are maps such that the following diagram

\begin{equation*}
\xymatrix {
(X, \tau) \ar[d]_{\a} \ar[r]^{\phi} & (Y, \mu)
\ar[d]^{\nu} \\
(X', \tau') \ar[r]^{\phi'} & (Y', \mu') }
\end{equation*}
commutes. If $X$ is fragmented by $\phi$ then $X'$ is
fragmented by $\phi'$.

\item \label{l:factor}
Let $\a: X \to X'$ be a continuous onto map between compact
spaces. Assume that $(Y, \mu)$ is a uniform space, $f: X \to Y$
and $f': X' \to Y$ are maps such that $f' \circ \a=f$. Then $f$
is a fragmented map iff $f'$ is a fragmented map.

\item \label{fr-sep-1}
If
$f:X \to Y$ is fragmented, where $(X,\tau)$ is a Baire space and
$(Y,\rho)$ is a pseudometric space, then $f$ is continuous at the
points of a dense $G_{\delta}$ subset of $X$.

\end{enumerate}
\end{lem}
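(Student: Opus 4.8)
The plan is to prove the six items essentially independently, in the order given, since each is a short consequence of unwinding Definition \ref{def:fr} together with Remark \ref{r:fr}.2 (it suffices to test closed sets $A$ and $\eps$ from a subbase).

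\medskip

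\emph{Item (1).} I would argue directly from the definitions. Fix a nonempty subset $A\subseteq X$ and $\eps\in\mu$; by Remark \ref{r:fr}.2 I may assume $A$ is closed, so $A$ is itself a topological space in the subspace topology. Since $f$ is barely continuous, $f\rest_A$ has a point of continuity $a\in A$. Choose a symmetric $\eps'\in\mu$ with $\eps'\circ\eps'\subseteq\eps$; by continuity of $f\rest_A$ at $a$ there is an open $O\subseteq X$ with $a\in O$ such that $f(O\cap A)$ is contained in the $\eps'$-ball around $f(a)$, hence is $\eps$-small. Thus $f\in\mathcal F(X,Y)$.

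\medskip

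\emph{Items (2), (3), (4), (5).} These are all ``transport'' statements and I would prove each by chasing a closed set through the relevant maps. For (2): given closed $A\subseteq X$ and $\eps\in\mu$, the set $\overline{\a(A)}$ is closed (actually I only need $\a(A)$, but using its closure is harmless since $\a(A)\subseteq\overline{\a(A)}$), apply fragmentedness of $f$ on the subset $\a(A)$ of $Y$ to get an open $O'\subseteq Y$ with $O'\cap\a(A)\neq\emptyset$ and $f(O'\cap\a(A))$ $\eps$-small; then $O:=\a^{-1}(O')$ is open, meets $A$, and $f\circ\a(O\cap A)\subseteq f(O'\cap\a(A))$ is $\eps$-small. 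For (3): I use Remark \ref{r:fr}.2 to reduce to a subbasic entourage of the product uniformity on $Y\subseteq\prod_i Z_i$, i.e. to an entourage of the form $(f_i\times f_i)^{-1}(\eps_i)$ for a single $i$; then fragmentedness of $f_i\circ p$ does the job verbatim. Here I should note that since $Y$ is compact Hausdorff and the $f_i$ separate points, the product uniformity on $Y$ induced by the $f_i$ is the unique compatible uniformity, so ``$\eps$-small in $Y$'' is controlled by finitely many coordinates. For (4): given closed $A\subseteq X'$, its preimage $\a^{-1}(A)$ is closed in $X$ and nonempty; apply fragmentedness of $\phi$ there with the entourage $\nu$ pulled back, i.e. with $(\nu\times\nu)^{-1}(\eps')\in\mu$ for suitable $\eps'\in\mu'$, obtaining open $O\subseteq X$ meeting $\a^{-1}(A)$ with $\phi(O\cap\a^{-1}(A))$ small; the subtle point is that $O$ need not be saturated, so I replace it by $O':=X'\setminus\a(X\setminus O)$, which is open since $\a$ is a \emph{closed} map (continuous surjection of compact Hausdorff spaces), satisfies $\a^{-1}(O')\subseteq O$, and still meets $A$ (because any $x\in O\cap\a^{-1}(A)$ — wait, one must check $\a(x)\in O'$: this holds precisely when $x$ is not in $\a^{-1}(\a(X\setminus O))$, which may fail). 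The genuinely careful argument: by compactness pick $x_0\in O\cap\a^{-1}(A)$, and use that the fibers are compact — actually the clean route is to shrink $A$ first to a minimal closed set or to invoke that we may take $O\cap\a^{-1}(A)$ and push forward; I would instead argue that $\a(O\cap\a^{-1}(A))$ is a subset of $A$ whose $\phi'$-image equals $\nu\phi(O\cap\a^{-1}(A))$, which is $\eps$-small by uniform continuity of $\nu$, and then find an open $O'\subseteq X'$ with $O'\cap A$ nonempty and contained in $\a(O\cap\a^{-1}(A))$ using the closed-map property applied to the closed set $A\setminus(O\cap\a^{-1}(A))$. This is the one spot that needs care. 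Item (5) is the special case of (4) with $Y'=Y$, $\nu=\mathrm{id}$ for the ``only if'' direction, and the ``if'' direction is exactly item (2) applied to $\a$ with $f'$ fragmented.

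\medskip

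\emph{Item (6).} This is the Baire-category statement. For each $n$ let $U_n$ be the union of all open $O\subseteq X$ with $\rho\text{-}\diam f(O)<1/n$; then $f$ is continuous exactly on $\bigcap_n U_n$, so it suffices to show each $U_n$ is dense (each is open by construction). Fix $n$ and a nonempty open $W\subseteq X$; apply fragmentedness to the closed set $A:=\overline{W}$ and $\eps=\{(y,y'):\rho(y,y')<1/n\}$ to get an open $O$ with $O\cap\overline W\neq\emptyset$ and $\diam f(O\cap\overline W)<1/n$. Now $O\cap W$ is nonempty open (since $O$ meets $\overline W$ it meets the open set $W$), and $f(O\cap W)$ has diameter $<1/n$, so $O\cap W\subseteq U_n\cap W$. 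Hence $U_n$ is dense open, and since $X$ is Baire, $\bigcap_n U_n$ is a dense $G_\delta$ on which $f$ is continuous.

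\medskip

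\emph{Main obstacle.} The only nonroutine point is the saturation issue in item (4) (and hence the ``only if'' half of (5)): fragmentedness of $\phi$ produces an open set in $X$ that is not a pullback of an open set in $X'$, so one must use that a continuous surjection between compact Hausdorff spaces is a closed map to manufacture the required open set downstairs. Everything else is a direct unwinding of the definitions, aided by the reduction to closed test sets and subbasic entourages from Remark \ref{r:fr}.2.
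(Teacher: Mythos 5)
Items (1), (2), (3) and (6) are correct and follow essentially the same route as the paper (the paper dismisses (1) as straightforward, and your (2), (3), (6) are verbatim its arguments: pull back the open set along $\a$, pass to the weak uniformity generated by the $f_i$ and use that it is the unique compatible uniformity on the compact space $Y$ together with Remark \ref{r:fr}.2, and take $O_\eps$ to be the union of all open sets with $\eps$-small image). Item (5) is also handled exactly as in the paper, by quoting (2) and (4). The one genuine problem is your treatment of (4), which the paper itself does not prove but outsources to \cite[Lemma 6.4]{GM}.

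For (4) you correctly identify the saturation obstacle, and you even name the right fix in passing (``shrink to a minimal closed set''), but the argument you actually commit to does not work. Starting from the full preimage $\a^{-1}(A)$ and an open $O$ with $O\cap\a^{-1}(A)\neq\emptyset$ and $\phi(O\cap\a^{-1}(A))$ small, the set you propose to use --- written as $A\setminus(O\cap\a^{-1}(A))$, which does not even live in one space, but presumably meant as $\a(\a^{-1}(A)\setminus O)$ --- yields the open set $O'=X'\setminus\a(\a^{-1}(A)\setminus O)$, and $O'\cap A$ consists of those $a\in A$ whose \emph{entire} fibre lies in $O$. This can be empty: take $X=X'\times\{0,1\}$, $\a$ the projection, $O=X'\times\{0\}$. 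The missing idea is the Zorn's lemma step from Namioka's argument: replace $\a^{-1}(A)$ by a closed subset $A_0\subseteq\a^{-1}(A)$ that is \emph{minimal} with respect to $\a(A_0)=A$ (such exists by compactness), and apply fragmentedness of $\phi$ to $A_0$. Minimality forces $\a(A_0\setminus O)\neq A$, so $O':=X'\setminus\a(A_0\setminus O)$ is open (as $\a$ is a closed map), meets $A$, and satisfies $O'\cap A\subseteq\a(O\cap A_0)$, whence $\phi'(O'\cap A)\subseteq\nu(\phi(O\cap A_0))$ is small by the uniform continuity of $\nu$. Without this minimality step the ``only if'' half of (5) is also unproved, since it rests on (4).
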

\begin{proof} (1): is straightforward.

(2): Let $A$ be a nonempty subset of $X$ and let $\eps \in \mu$.
Choose an open subset $O$ in $Y$ such that $\a(A) \cap O$ is
nonempty and $f(\a(A) \cap O)$ is $\eps$-small. Since $\a(A \cap
\a^{-1}(O))=\a(A) \cap O$ we get that $A \cap \a^{-1}(O)$ is
nonempty and $(f \circ \a) (A \cap \a^{-1}(O))$ is $\eps$-small in
$Y$. This completes the proof because $\a^{-1}(O)$ is open in $X$
by the continuity of $\a$.

(3):
Consider the weak uniformity $\mu_w$ on $Y$ generated by the
system $\{f_i: Y \to Z_i\}_{i \in I}$. Since this system separates
the points of $Y$ and each $Z_i$ is a Hausdorff uniform space we get that
$\mu_w$ is a Hausdorff uniformity on $Y$. Furthermore $\mu_w$ is
continuous on $Y$. Now it is clear that $\mu_w$ coincides with the
unique compatible uniformity on the compact space $Y$. The system
of entourages $$\{(f_i^{-1} \times f_i^{-1}) (\eps_i) | \ \ \eps_i \in \mu_i\}$$ is a
subbase of the uniformity $\mu_w$. Now use Remark \ref{r:fr}.2.

(4):
This is \cite[Lemma 6.4]{GM} which in turn was inspired by
Lemma 2.1 of Namioka's paper \cite{N}.


(5):
If $f'$ is fragmented then $f$ is fragmented by (2).
If $f$ is fragmented then $f'$ is fragmented by (4)
(with $Y=Y'$ and $\nu=id_Y$).

(6): For a fixed $\ep > 0$ consider
$$
O_{\ep}:= \{ \text{union of all $\tau$-open subsets $O$ of $X$
with} \hskip 0.3cm {\diam}_{\rho} f(O) \leq \ep \}.
$$
The
fragmentability implies that $O_{\ep}$ is dense in $X$. Clearly
$\bigcap \{O_{\frac{1}{n}} :   n \in \N \}$
serves as the required dense
$G_{\delta}$ subset of $X$.
\end{proof}

\subsection{Baire class one functions}

Given two topological spaces $X$ and $Y$, a function $f:X\to Y$ is
{\em of Baire class 1} or more briefly {\em Baire 1} if the
inverse image of every open set in $Y$ is $F_\sigma$ (the union of
countably many closed sets) in $X$. In general a Baire 1 function
need not be the same as
a limit of a sequence of continuous
functions.
The following results are well known. Mainly they are classical
and come from R. Baire. See for example \cite{Dulst, Kech, GMU}.

\begin{lem} \label{l:Baire1}
\ben
\item
If $Y$ is metrizable and $\{f_n : X \to Y\}_{n \in \N}$ is a
sequence of continuous functions converging pointwise to $f : X
\to Y$ then f is Baire 1.
\item
If $X$ is separable and metrizable then a real valued function $f: X \to \R$
is Baire 1 if and only if $f$ is a pointwise limit of a sequence of
continuous functions.
\een
\end{lem}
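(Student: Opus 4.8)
The plan is to prove (1) directly, to obtain one implication of (2) as the special case $Y=\R$ of (1), and to treat the reverse implication of (2) as the classical Lebesgue--Hausdorff theorem. For (1) it suffices to check that $f^{-1}(V)$ is $F_\sigma$ in $X$ for every open $V\subseteq Y$ (the case $V=Y$ is trivial). Fix a metric $d$ on $Y$ and, for $k\in\N$, set $V_k=\{y\in Y:\ d(y,Y\setminus V)>1/k\}$; each $V_k$ is open, $\overline{V_k}\subseteq V$, and $\bigcup_{k}V_k=V$. I claim that
\[
f^{-1}(V)=\bigcup_{k\in\N}\ \bigcup_{N\in\N}\ \bigcap_{n\ge N} f_n^{-1}\bigl(\overline{V_k}\bigr).
\]
If $f(x)\in V$, then $f(x)\in V_k$ for some $k$, and since $V_k$ is open and $f_n(x)\to f(x)$ there is an $N$ with $f_n(x)\in V_k\subseteq\overline{V_k}$ for all $n\ge N$; conversely, if $f_n(x)\in\overline{V_k}$ for all $n\ge N$, then $f(x)=\lim_n f_n(x)\in\overline{V_k}\subseteq V$ since $\overline{V_k}$ is closed. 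Each $f_n$ being continuous, $f_n^{-1}(\overline{V_k})$ is closed, so $\bigcap_{n\ge N}f_n^{-1}(\overline{V_k})$ is closed and the displayed identity writes $f^{-1}(V)$ as a countable union of closed sets. Specializing to $Y=\R$ gives the implication ``pointwise limit of continuous functions $\Rightarrow$ Baire $1$'' of (2) (separability is not needed here).

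For the other implication of (2) one must show that a Baire $1$ function $f:X\to\R$ on a separable metrizable $X$ is a pointwise limit of continuous functions. First I would reduce to bounded $f$: composing with a homeomorphism $h:\R\to(0,1)$, e.g.\ $h(t)=\tfrac12+\tfrac1\pi\arctan t$, the function $h\circ f$ is again Baire $1$, and once it is realized as a pointwise limit of continuous $g_n$, truncating $g_n$ into $[\eps_n,1-\eps_n]$ with $\eps_n\downarrow0$ keeps pointwise convergence and makes $h^{-1}\circ g_n$ a continuous approximant of $f$. Next I would use two building blocks. (a) If $A\subseteq X$ is \emph{ambiguous}, i.e.\ both $F_\sigma$ and $G_\delta$, then $\chi_A$ is a pointwise limit of continuous functions: writing $A=\bigcup_m F_m$ and $X\setminus A=\bigcup_m K_m$ with $F_m,K_m$ closed, increasing, and (automatically, since $F_m\subseteq A$ and $K_m\subseteq X\setminus A$) disjoint for each fixed $m$, metrizability of $X$ yields continuous $g_m:X\to[0,1]$ equal to $1$ on $F_m$ and $0$ on $K_m$, and then $g_m\to\chi_A$ pointwise; since the ambiguous sets form an algebra, it follows that every finitely valued function with ambiguous level sets is a pointwise limit of continuous functions. (b) Every bounded Baire $1$ function is a \emph{uniform} limit of finitely valued functions whose level sets are all ambiguous.

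The main obstacle is (b), and it is here that the Baire $1$ hypothesis is used essentially: the natural level sets $\{f\ge j\eps\}$ are only $G_\delta$ in general and $\{f>j\eps\}$ only $F_\sigma$, so neither is ambiguous. Instead one exploits that for $a<b$ the two $F_\sigma$ sets $\{f>a\}$ and $\{f<b\}$ cover $X$ and, interleaving closed exhaustions of these two families, cuts $X$ into finitely many ambiguous pieces on each of which $f$ oscillates by at most $\eps$; this is exactly the Lebesgue--Hausdorff construction, which I would either reproduce or take from \cite{Dulst, Kech}.

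Granting (a) and (b), the proof concludes by a soft diagonalization. Pick finitely valued $\phi_k$ with ambiguous level sets and $\|\phi_k-f\|_\infty\to0$, and, passing to a subsequence, assume $\|\phi_{k+1}-\phi_k\|_\infty\le2^{-k}$, so that $f=\phi_1+\sum_{k\ge1}(\phi_{k+1}-\phi_k)$ with the series uniformly convergent. By (a) choose continuous $\psi_m\to\phi_1$ pointwise and, for each $k$, continuous $\theta_{k,m}\to\phi_{k+1}-\phi_k$ pointwise with $\|\theta_{k,m}\|_\infty\le\|\phi_{k+1}-\phi_k\|_\infty$. Then $F_m:=\psi_m+\sum_{k=1}^{m}\theta_{k,m}$ is continuous and $F_m\to f$ pointwise: for fixed $K$ the piece $\psi_m+\sum_{k=1}^{K}\theta_{k,m}$ tends to $\phi_{K+1}$ as $m\to\infty$, while the tail $\sum_{k>K}\theta_{k,m}$ and the remainder $\phi_{K+1}-f=\sum_{k>K}(\phi_{k+1}-\phi_k)$ each have absolute value $\le2^{-K}$.
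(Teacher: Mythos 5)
Your argument is correct. Note, however, that the paper itself offers no proof of this lemma: it is stated as a classical fact with a pointer to \cite{Dulst, Kech, GMU}, so there is no ``paper's route'' to compare against line by line. What you have written is an accurate reconstruction of the standard proof. Part (1) is handled by the usual identity
\[
f^{-1}(V)=\bigcup_{k}\bigcup_{N}\bigcap_{n\ge N} f_n^{-1}\bigl(\overline{V_k}\bigr),
\]
which is verified correctly in both inclusions (openness of $V_k$ for one direction, closedness of $\overline{V_k}$ for the other), and it indeed needs no hypothesis on $X$ beyond topology. For the converse in (2) your outline is the Lebesgue--Hausdorff scheme: reduction to bounded $f$ via $\arctan$ and truncation, approximation of $\chi_A$ for ambiguous $A$ by Urysohn functions attached to increasing closed exhaustions of $A$ and of its complement, uniform approximation of a bounded Baire~1 function by finitely valued functions with ambiguous level sets, and the telescoping diagonalization $F_m=\psi_m+\sum_{k=1}^m\theta_{k,m}$, whose convergence estimate you carry out correctly. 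The only step you do not write out in full is (b), the refinement of the finite $F_\sigma$ cover $\{a_{j-1}<f<a_{j+1}\}$ into disjoint ambiguous pieces; this is exactly the classical reduction lemma for $F_\sigma$ sets in metric spaces (closed sets being $G_\delta$ there), and deferring it to \cite{Kech} is no weaker than what the paper itself does for the entire lemma. Two small observations: your proof of (1) shows the forward direction of (2) for arbitrary metrizable $X$, and in fact nothing in your sketch of the converse uses separability either, so you are proving slightly more than the statement requires; and in (b) one should note explicitly that ambiguity of the pieces $A_n^*$ follows because they partition $X$ and each is $F_\sigma$, so each complement is a finite union of $F_\sigma$ sets.
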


%
%

We denote by $\calB_1(X)$ the space of all real valued Baire 1
functions on $X$ equipped with the pointwise convergence topology.
That is, $\calB_1(X)$ is a topological subspace of the product
space $\R^X$.

As usual, a space is {\em Baire} if the intersection of any
countable family of dense open sets is dense.
{\it Hereditarily
Baire} means that every closed subspace is a Baire space.

\begin{lem} \label{Baire-1}
Let $(X,\tau)$ be a hereditarily Baire (e.g., Polish, or compact)
space, $(Y,\rho)$ a pseudometric space. Consider the following
assertions:
\begin{enumerate}
\item [(a)]
$X$ is $(\tau,\rho)$-fragmented by $f: X \to Y$;
\item [(b)]
$f$ is barely continuous;
\item [(c)]
$f$ is of Baire class 1.
\end{enumerate}
\ben
\item
Then $(a) \Leftrightarrow (b)$.
\item
If $X$ is Polish and $Y$ is a separable metric space then
$(a)
\Leftrightarrow (b) \Leftrightarrow~(c).$
 \een
\end{lem}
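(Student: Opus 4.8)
The plan is to prove the two equivalences by a cycle of implications, treating $(a)\Leftrightarrow(b)$ first in the general hereditarily Baire setting and then adding $(c)$ under the Polish/separable hypotheses. The implication $(a)\Rightarrow(b)$ is essentially Lemma \ref{simple-fr}(\ref{fr-sep-1}) applied locally: given a nonempty closed $A\subset X$, note that $A$ is itself a Baire space (here is where hereditary Baireness is used), and $f|_A$ is $(\tau,\rho)$-fragmented as a restriction of a fragmented map; hence by Lemma \ref{simple-fr}(\ref{fr-sep-1}) $f|_A$ is continuous on a dense $G_\delta$ of $A$, in particular at some point, which is exactly bare continuity. For $(b)\Rightarrow(a)$ I would argue by transfinite induction / a maximality argument: fix $\varepsilon>0$ and a nonempty subset $S\subset X$; we must find a $\tau$-open $O$ with $O\cap S\neq\emptyset$ and $\mathrm{diam}_\rho f(O\cap S)\le\varepsilon$. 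Pass to $A=\mathrm{cls}(S)$, a closed hence Baire subspace; bare continuity gives a point $a\in A$ at which $f|_A$ is continuous, so there is a $\tau$-open $U$ with $a\in U$ and $\mathrm{diam}_\rho f(U\cap A)\le\varepsilon$; since $a\in\mathrm{cls}(S)$ we have $U\cap S\neq\emptyset$, and $\mathrm{diam}_\rho f(U\cap S)\le\mathrm{diam}_\rho f(U\cap A)\le\varepsilon$. Taking $O=U$ finishes part (1).

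For part (2), with $X$ Polish and $Y$ a separable metric space, it remains to fold in $(c)$. The implication $(c)\Rightarrow(b)$ is the classical Baire theorem: if $f$ is Baire class $1$ and $A\subset X$ is nonempty closed, then $f|_A$ is Baire class $1$ on the Polish (hence hereditarily Baire) space $A$, and one shows $f|_A$ has a point of continuity by the standard oscillation argument — for each $n$ the set $\{x\in A:\mathrm{osc}(f|_A)(x)<1/n\}$ is open and, using that preimages of the countably many basic open balls of $Y$ are $F_\sigma$ together with the Baire category theorem in $A$, it is dense, so the intersection over $n$ is a dense $G_\delta$ of continuity points. This is essentially the content cited to \cite{Dulst, Kech, GMU}; I would either reproduce the oscillation argument in a few lines or cite it. For $(b)\Rightarrow(c)$, equivalently (via part (1)) $(a)\Rightarrow(c)$: fix an open $V\subset Y$ and an increasing sequence of closed sets $C_k$ with $\bigcup_k C_k=V$ and $\mathrm{cls}(C_k)\subset C_{k+1}$ (possible since $Y$ is metric), chosen so that $d(C_k, Y\setminus V)\ge 1/k$; then using fragmentability at scale $1/k$ together with the Baire property of $X$, one writes $f^{-1}(C_k)$ as a countable union of sets that are relatively closed on pieces of a dense-open decomposition, and assembles $f^{-1}(V)=\bigcup_k f^{-1}(C_k)$ as an $F_\sigma$. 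Alternatively, and more cleanly, since $X$ is Polish and $Y$ separable metric one may invoke that a barely continuous map into a separable metric space is a pointwise limit of continuous maps and then apply Lemma \ref{l:Baire1}(1); but proving that limit representation is itself the crux, so I would likely keep the direct $F_\sigma$ argument.

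The main obstacle is the $(b)\Rightarrow(c)$ direction (equivalently the passage from bare continuity / fragmentability to the genuine $F_\sigma$-preimage condition): this is where separability of $Y$ and completeness of $X$ are genuinely needed, and it requires the careful "peeling" of $X$ along a transfinite sequence of open sets on each of which the oscillation of $f$ is small, checking at each successor and limit stage that one stays inside a Baire subspace so the process terminates and the accumulated decomposition exhibits $f^{-1}(V)$ as a countable union of closed sets. The other implications are soft once one consistently exploits that closed subspaces of a hereditarily Baire space are Baire and that restrictions and composites behave well under fragmentability via Lemma \ref{simple-fr}.
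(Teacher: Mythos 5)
Your proposal is correct and takes essentially the same route as the paper: the equivalence $(a)\Leftrightarrow(b)$ is exactly Lemma \ref{simple-fr} (part (1) for $(b)\Rightarrow(a)$, part (6) applied to closed subsets, which are Baire by hereditary Baireness, for $(a)\Rightarrow(b)$), and $(b)\Leftrightarrow(c)$ is the classical theorem of Baire, which the paper simply cites to \cite{Kech} (Theorem 24.15) and whose standard oscillation and transfinite-decomposition arguments you sketch. No essential step is missing.
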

\begin{proof} For (a) $\Leftrightarrow$ (b) use
Lemma \ref{simple-fr}.

The equivalence (b) $\Leftrightarrow$ (c) for Polish $X$ and
separable metric space
$Y$ is well known (see \cite[Theorem 24.15]{Kech}) and
goes back to Baire.
\end{proof}

\begin{cor} \label{c:fr=B1}
\ben
\item ${\mathcal F}(X)=B_r'(X)$
for every compact space $X$.
\item ${\mathcal F}(X)=\calB_1(X)=B_r'(X)$
for every Polish space $X$.
\een
\end{cor}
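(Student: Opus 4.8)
The plan is to deduce both statements directly from the two preceding results, Lemma \ref{simple-fr} and Lemma \ref{Baire-1}. For part (1), I need the equality $\mathcal{F}(X) = B_r'(X)$ for an arbitrary compact space $X$. One inclusion, namely $B_r'(X) \subseteq \mathcal{F}(X)$, is immediate from Lemma \ref{simple-fr}(1): every barely continuous real-valued function is fragmented. For the reverse inclusion $\mathcal{F}(X) \subseteq B_r'(X)$, I would invoke Lemma \ref{Baire-1}(1), which gives the equivalence $(a) \Leftrightarrow (b)$ — that is, $(\tau,\rho)$-fragmentability by $f$ is equivalent to barely continuity of $f$ — for any hereditarily Baire space $X$; a compact space is hereditarily Baire, and $\R$ with its usual metric is a pseudometric space, so applied with $Y = \R$ this yields exactly $\mathcal{F}(X) = B_r'(X)$.

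For part (2), I want $\mathcal{F}(X) = \calB_1(X) = B_r'(X)$ for every Polish space $X$. The equality $\mathcal{F}(X) = B_r'(X)$ follows once more from Lemma \ref{Baire-1}(1), since a Polish space is hereditarily Baire. For the additional equality with $\calB_1(X)$, I would use Lemma \ref{Baire-1}(2): when $X$ is Polish and $Y$ is a separable metric space one has $(a) \Leftrightarrow (b) \Leftrightarrow (c)$, i.e. fragmentability, barely continuity, and Baire class 1 all coincide. Taking $Y = \R$ (which is separable metric) gives $\mathcal{F}(X) = B_r'(X) = \calB_1(X)$, as desired.

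Honestly there is no real obstacle here — the corollary is a bookkeeping consequence of the two lemmas just proved, with the only points to check being that compact spaces and Polish spaces are hereditarily Baire (standard) and that $\R$ qualifies as the target space in each invocation (it does, being a separable metric, hence pseudometric, space). The one thing worth stating explicitly is that in part (1) we only get the two-term equality because for a general compact (non-Polish) $X$ the equivalence with Baire class 1 is not available — Lemma \ref{Baire-1}(2) genuinely requires $X$ Polish — which is exactly why $\calB_1(X)$ appears only in part (2).
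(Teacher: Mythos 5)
Your proposal is correct and is exactly the derivation the paper intends: the corollary is stated without proof as an immediate application of Lemma \ref{Baire-1} with $Y=\R$, using that compact and Polish spaces are hereditarily Baire for part (1) and the Polish-case equivalence with Baire class 1 for part (2). Your closing remark about why $\calB_1(X)$ is absent from the compact case is also the right reading of the statement.
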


\sk
\subsection{Fragmented families}

The following definition was introduced in \cite{GM} and
independently in the Ph.D. Thesis of M.M. Guillermo \cite{Guil}.

\begin{defn} \label{d:fr-family}
\ben
\item
We say that a {\it family of functions} $F=\{f: (X,\tau) \to
(Y,\mu) \}$ is {\it fragmented}
if the condition of Definition
\ref{def:fr}.1 holds simultaneously for all $f \in F$. That is,
$O \cap A$ is nonempty and
$f(O \cap A)$ is $\eps$-small for every $f \in F$. It is
equivalent to say that the mapping
$$
\pi_{\sharp}: X \to Y^F, \hskip 0.4cm \pi_{\sharp}(x)(f)=f(x)
$$
is $(\tau, \mu_u)$-fragmented, where $\mu_u$ is the uniform
structure of uniform convergence on the set $Y^F$ of
all mappings from $F$ into $(Y, \mu)$.
\item
Analogously one can define the notion of a {\it barely continuous
family}. The latter means that every closed nonempty subset
$A\subset X$ contains a point $a\in A$ such that $F_A =\{f
\upharpoonright_A: f\in F \}$ is equicontinuous at $a$.
If $\mu$ is pseudometrizable then so is $\mu_u$. Therefore if in
addition $(X, \tau)$ is hereditarily Baire then it follows by
Lemma \ref{Baire-1}.1
that $F$ is a fragmented family if and
only if  $F$ is a barely continuous family.
\een
\end{defn}

Fragmented families, like equicontinuous families, are stable
under pointwise closures as the first assertion of the following lemma shows.

\begin{lem} \label{l:fr-f-cls}
\ben
\item
Let $F=\{f_i: (X,\tau) \to (Y,\mu) \}_{i \in I}$ be a fragmented family of
functions.
Then the pointwise closure
$\cls_p{F}$ of $F$ in $Y^X$ is
also a $(\tau, \mu)$-fragmented family.
\item
Let $\a: X \to X'$ be a continuous onto map between compact
spaces. Assume that $(Y, \mu)$ is a uniform space, $F:=\{f_i: X \to Y\}_{i \in I}$
and $F':=\{f'_i: X' \to Y\}$ are families such that $f'_i \circ \a=f_i$ for every $i \in I$. Then $F$
is a fragmented family iff $F'$ is a fragmented family.
\een
\end{lem}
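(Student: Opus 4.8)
The plan is to reduce both parts to the corresponding statements about a single fragmented map by passing to the canonical evaluation map $\pi_\sharp\colon X \to Y^F$, as in Definition \ref{d:fr-family}.1. For part (1), I would first observe that a family $F$ being $(\tau,\mu)$-fragmented is, by definition, the same as the single map $\pi_\sharp\colon (X,\tau) \to (Y^F,\mu_U)$ being fragmented, where $\mu_U$ is the uniformity of uniform convergence. The key point is that the pointwise closure $\cls_p F$ lives inside $Y^X$, and the obvious restriction map $Y^X \to Y^F$ (remembering only the values at members of $F \subset Y^X$) sends $\cls_p F$ continuously onto a set whose members, viewed as maps $X \to Y$, are exactly the limits we care about. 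Concretely, fix a nonempty $A \subset X$ and an entourage $\eps \in \mu$; fragmentedness of $F$ gives an open $O \subset X$ with $O \cap A \neq \emptyset$ and $f(O \cap A)$ $\eps$-small for \emph{every} $f \in F$. Now if $g \in \cls_p F$ and $x, y \in O \cap A$, then $(g(x), g(y))$ is a pointwise limit of pairs $(f_\lambda(x), f_\lambda(y))$, each lying in the $\eps$-small set; since $\eps$ may be taken closed (Remark \ref{r:fr}.2, working from a subbase of closed entourages), the pair $(g(x), g(y))$ again lies in the $\eps$-small set. Hence $g(O \cap A)$ is $\eps$-small, and since the same $O$ works for all $g \in \cls_p F$ simultaneously, $\cls_p F$ is a fragmented family. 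The one technical care needed is the passage to closed entourages so that the pointwise limit stays inside the $\eps$-small set; this is exactly what Remark \ref{r:fr}.2 licenses.

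For part (2), the implication I would prove directly is ``$F'$ fragmented $\Rightarrow$ $F$ fragmented,'' and then obtain the converse from the compactness hypothesis. If $F'$ is fragmented then each $f_i' \circ \a = f_i$ is fragmented by Lemma \ref{simple-fr}.2, but I need the \emph{uniform} version; so instead I would run the argument at the level of the evaluation maps. Note the commuting square
\begin{equation*}
\xymatrix{
(X,\tau) \ar[d]_{\a} \ar[r]^{\pi_\sharp} & (Y^F,\mu_U) \ar[d]^{\cong} \\
(X',\tau') \ar[r]^{\pi'_\sharp} & (Y^{F'},\mu_U)
}
\end{equation*}
where the right vertical arrow is the bijection induced by $f_i \leftrightarrow f_i'$, which is an isomorphism of uniform spaces because it is literally a relabelling of coordinates (so it is uniformly continuous with uniformly continuous inverse). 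Thus $F'$ fragmented means $\pi'_\sharp$ fragmented, and applying Lemma \ref{simple-fr}.2 to $\pi'_\sharp \circ \a = (\text{iso}) \circ \pi_\sharp$ shows $\pi_\sharp$ is fragmented, i.e. $F$ is fragmented. For the reverse direction, suppose $F$ is a fragmented family; then $\pi_\sharp$ is fragmented, and the commuting square together with Lemma \ref{simple-fr}.\ref{l-quot-fr} (applied with the continuous surjection $\a$, the uniformly continuous $\nu$ being the coordinate-relabelling isomorphism, $\phi = \pi_\sharp$, $\phi' = \pi'_\sharp$) yields that $X'$ is fragmented by $\pi'_\sharp$, i.e. $F'$ is a fragmented family.

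The step I expect to be the only real obstacle is making sure that Lemma \ref{simple-fr}.\ref{l-quot-fr} genuinely applies: it requires $X$ and $X'$ compact and $\nu$ uniformly continuous, both of which hold here, but one must check the square I wrote actually commutes on the nose, i.e. that for $x \in X$ the function $i \mapsto f_i(x)$ on $F$ corresponds under the relabelling to the function $i \mapsto f_i'(\a(x))$ on $F'$ — and this is exactly the hypothesis $f_i' \circ \a = f_i$. Once that bookkeeping is pinned down, both parts follow formally from the single-map lemmas already proved, with no further estimates. An alternative, slightly more pedestrian route for part (1) avoids $\mu_U$ entirely and just argues pointwise as I sketched above with closed entourages; I would probably present that version since it is shortest and self-contained, and present part (2) via Lemma \ref{simple-fr}.\ref{l-quot-fr} and \ref{simple-fr}.2.
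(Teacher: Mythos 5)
Your proposal is correct and follows essentially the same route as the paper: part (1) is the paper's ``straightforward $3\eps$-argument'' (your closed-entourage version, licensed by the fact that closed entourages form a base of any uniformity together with Remark \ref{r:fr}.2, is just an equivalent packaging of it), and part (2) rests, as in the paper, on Lemma \ref{simple-fr}.2 for one direction and Lemma \ref{simple-fr}.\ref{l-quot-fr} for the other, the only cosmetic difference being that you apply the latter directly to the evaluation maps $\pi_\sharp,\pi'_\sharp$ whereas the paper phrases it via the weak uniformities they induce on $X$ and $X'$. The bookkeeping point you flag (that $f'_i\circ\a=f_i$ with $\a$ onto makes the relabelling $Y^F\cong Y^{F'}$ a well-defined uniform isomorphism and the square commute) is exactly the right thing to check and is indeed all that is needed.
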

\begin{proof} (1)
A straightforward ``$3\eps$-argument".

(2)
If $F'$ is fragmented then an obvious modification (for families) of Lemma \ref{simple-fr}.2 shows that $F$ is fragmented.

If $F$ is fragmented then this means that the identity map $(X,\tau) \to (X,\xi)$ is fragmented, where
$\xi$ is the weak uniformity induced on the set $X$ by the natural map
$X \to (Y^F, \mu_u)$ (see Definition \ref{d:fr-family}). 
Analogously, the map $X' \to (Y^{F'}, \mu_u)$ induces the uniformity $\xi'$ on $X'$.
Then $\a: (X,\xi) \to (X',\xi')$ is a uniform map.
Now Lemma \ref{simple-fr}.4 implies that the identity map
$(X',\tau') \to (X',\xi')$ is fragmented.
This means in view
of
 Definition \ref{d:fr-family} that $F'$ is a fragmented family.
\end{proof}

\begin{lem} \label{l:FrFa}
\ben
\item
Suppose $F$ is a compact space, $X$ is \v{C}ech-complete,
$M$ is a metrizable space and we are given a separately continuous map
$w: X \times F \to M$.
Then the naturally associated family
$$\tilde{F}:=\{\tilde{f}: X \to M\}_{f \in F}$$ is fragmented, where $\tilde{f}(x) = w(x,f)$.
\item
Suppose $F$ is a compact and metrizable space, $X$ is Polish and $M$ is separable and metrizable.
Assume we are given a map $w: X \times F \to M$ such that
every $\tilde{x}: F \to M, f \mapsto w(x,f)$ is continuous and $y: X \to M$ is continuous at
every $\tilde{y} \in Y$ for some dense subset $Y$ of $F$. Then the family $\tilde{F}$ is barely
continuous (hence, fragmented).
\een
\end{lem}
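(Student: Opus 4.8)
The plan is to treat both parts by the same scheme: reduce the assertion ``$\tilde F$ is a fragmented family'' to ``$\pi_{\sharp}$ has a point of continuity on every closed set'' and then supply such points by a Namioka-type argument. For Part (1), recall from Definition \ref{d:fr-family} that $\tilde F$ is a fragmented family exactly when $\pi_{\sharp}\colon X\to (M^{F},\mu_U)$, $\pi_{\sharp}(x)(f)=w(x,f)$, is $(\tau,\mu_U)$-fragmented, $\mu_U$ being the uniformity of uniform convergence. Separate continuity gives $\pi_{\sharp}(x)=\tilde x\in C(F,M)$, so $\pi_{\sharp}$ lands in $C(F,M)$, on which $\mu_U$ is the sup-metric uniformity; this is pseudometrizable since $M$ is metrizable, and $X$ is hereditarily Baire since a closed subspace of a \v{C}ech-complete space is \v{C}ech-complete, hence Baire. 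By the final observation in Definition \ref{d:fr-family} (which rests on Lemma \ref{Baire-1}), it therefore suffices to show that $\tilde F$ is a \emph{barely continuous} family, i.e.\ that for each nonempty closed $A\subseteq X$ the map $\pi_{\sharp}|_A\colon A\to (C(F,M),\sup)$ has a point of continuity.

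So I would fix a nonempty closed $A\subseteq X$: then $A$ is \v{C}ech-complete, $F$ is compact, $M$ is metric, and $w|_{A\times F}$ is separately continuous, so Namioka's joint continuity theorem \cite{N-jct} produces a dense $G_\delta$ set $D\subseteq A$ at each point of which $w|_{A\times F}$ is jointly continuous. Then I would check the elementary implication that joint continuity of $w$ at every point of $\{a\}\times F$ forces $\pi_{\sharp}|_A$ to be sup-metric continuous at $a$: cover the compact space $F$ by finitely many open sets $N_{f_1},\dots,N_{f_r}$ each of which, together with a neighbourhood $U_{f_j}$ of $a$ in $A$, keeps $w$ within $\varepsilon/2$ of $w(a,f_j)$; on $U=\bigcap_j U_{f_j}$ the triangle inequality gives $\sup_{f\in F} d(w(x,f),w(a,f))\le\varepsilon$ for $x\in U$. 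Any $a\in D$ is then a point of continuity of $\pi_{\sharp}|_A$, which finishes Part (1).

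For Part (2) the point is that $w$ need not be separately continuous: for $f\notin Y$ the map $\tilde f\colon X\to M$ is only a pointwise limit of the continuous $\tilde y$ ($y\in Y$, taken along a sequence $y\to f$ in the metric space $F$), hence merely of Baire class $1$, so Part (1) does not apply. Instead I would show directly that $\pi_{\sharp}$ is of Baire class $1$ into a separable metric space. To that end replace $Y$ by a countable dense subset $\{y_k\}_{k\in\N}$ of $F$ and embed $M$ isometrically into a separable Banach space $V$ (every separable metric space admits such an embedding); then $\pi_{\sharp}$ may be viewed as a map $X\to (C(F,V),\|\cdot\|_\infty)$, a separable metric target, with $F$-diameters of subsets of $M$ unchanged. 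If I can produce continuous maps $\Psi_n\colon X\to (C(F,V),\|\cdot\|_\infty)$ with $\Psi_n\to\pi_{\sharp}$ pointwise on $X$, then $\pi_{\sharp}$ is Baire class $1$ by Lemma \ref{l:Baire1}, hence barely continuous by Lemma \ref{Baire-1} (as $X$ is Polish), which—exactly as in Part (1)—says that $\tilde F$ is a barely continuous, hence a fragmented, family.

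I would build $\Psi_n$ by interpolating finitely many of the continuous functions $\tilde{y_k}$ over $F$ with a partition of unity: for each $n$ pick a finite open cover $W_1,\dots,W_{m_n}$ of $F$ by sets of diameter $<\delta_n$ with $\delta_n\downarrow 0$, a subordinate partition of unity $\phi_1,\dots,\phi_{m_n}\in C(F,[0,1])$, and points $z_i\in W_i\cap Y$, and set $\Psi_n(x)(f):=\sum_{i=1}^{m_n}\phi_i(f)\,w(x,z_i)\in V$. Then $\Psi_n(x)\in C(F,V)$; $\Psi_n$ is continuous because $\|\Psi_n(x)-\Psi_n(x')\|_\infty\le\max_{i\le m_n}\|\tilde{z_i}(x)-\tilde{z_i}(x')\|$ with the finitely many $\tilde{z_i}$ continuous; and $\|\Psi_n(x)-\pi_{\sharp}(x)\|_\infty\le\omega_x(\delta_n)\to 0$, where $\omega_x$ is a modulus of uniform continuity of the continuous map $\tilde x$ on the compact space $F$. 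I expect this construction to be the only genuinely non-routine step: compactness of $F$ is what supplies both the finite covers and the uniform continuity of each $\tilde x$, and it is what forces the passage to a Banach space so that the values $w(x,z_i)$ can be averaged; everything else is bookkeeping with Namioka's theorem and the fragmentability/bare-continuity dictionary of Section \ref{sec:fr}.
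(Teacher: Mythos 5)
Your proof is correct. For part (1) you take the same route as the paper, which simply says ``use Namioka's joint continuity theorem''; you merely fill in the standard details (closed subspaces of \v{C}ech-complete spaces are \v{C}ech-complete, apply Namioka on $A\times F$, and use compactness of $F$ to upgrade joint continuity along $\{a\}\times F$ to sup-metric continuity of $\pi_{\sharp}|_A$ at $a$). For part (2) the paper argues differently: it observes that $j\colon X\to (C(F,M),\sup)$ is well defined and then quotes \cite[Proposition 2.4]{GMU} as a black box to get a point of continuity of $j|_A$ on every closed $A$, whence equicontinuity of $\tilde F_A$ at that point. You instead give a self-contained argument: embed $M$ isometrically into a separable Banach space, approximate $\pi_{\sharp}$ pointwise by the continuous maps $\Psi_n(x)=\sum_i\phi_i(\cdot)\,w(x,z_i)$ built from partitions of unity on $F$ with nodes $z_i$ in the dense set $Y$, conclude that $\pi_{\sharp}$ is Baire class~$1$ into a separable metric space, and then invoke the Baire-1/barely-continuous/fragmented dictionary for Polish domains (Lemmas \ref{l:Baire1} and \ref{Baire-1}). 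The estimates you need -- $\|\Psi_n(x)-\pi_{\sharp}(x)\|_\infty\le\omega_x(\delta_n)$ from uniform continuity of each $\tilde x$ on the compact $F$, and continuity of each $\Psi_n$ from continuity of the finitely many $\tilde{z_i}$ -- are both sound, and the isometric embedding keeps $\eps$-smallness in $M$ unchanged, so fragmentedness is not affected. What your version buys is independence from the external reference and the extra information that $\pi_{\sharp}$ is actually of Baire class~$1$ (not merely barely continuous); what the paper's version buys is brevity. Both are valid.
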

\begin{proof}
(1): Use Namioka's joint continuity theorem \cite{N-jct}.

(2): Since every $\tilde{x}: F \to M$ is continuous, the natural map $j: X
\to C(F, M), \ j(x)=\tilde{x}$ is well defined.
For every closed nonempty
subset $A \subset X$ the induced map $j |_{A}: A \to
C(F, M)$ has a point of continuity by virtue of
\cite[Proposition 2.4]{GMU}, where $C(F,M)$ carries the
sup-metric.
Hence, $\tilde{F}_{A} =\{\tilde{f}
\upharpoonright_{A}: A \to M \}_{f \in F}$ is equicontinuous at
some point $a\in A$. This means, by Definition \ref{d:fr-family}.2, that
the family $\tilde{F}$ is barely continuous.
\end{proof}

\begin{defn} \label{d:ev-fr}  We say that a family of functions
$F=\{f: (X,\tau) \to (Y,\mu) \}$ is
{\it eventually fragmented} if every sequence in $F$ has a subsequence
which is a fragmented family on $X$.
\end{defn}


\begin{example} \label{e:fr-fam}
Let $V$ be a Banach space. Then we can treat
the unit ball $B_V$ of $V$ as a family of
functions on the weak${}^*$ compact space $B_{V^*}$. \ben
\item
$B_V$ is a fragmented family of functions on $B_{V^*}$
if and only if $V$ is Asplund. This fact easily follows from the
following well known characterization of Asplund spaces: $V$ is
Asplund iff $B_{V^*}$ is (weak${}^*$,norm)-fragmented (Remark
\ref{r:fr}.4).
\item
$B_V$ is an eventually fragmented family of functions on $B_{V^*}$
if and only if $V$ is a Rosenthal Banach space (see Proposition
\ref{p:crit}).
\een
\end{example}

\section{Dynamical background: $G$-spaces and Banach representations}
\label{s:Dyn}

By a $G$-space (or a $G$-system)
$X$ we mean a
continuous action of a topological group $G$ on a topological
space $X$. Sometimes we write it as $(G,X)$.
The Banach algebra (under the supremum norm) of all continuous
real valued  bounded functions on a topological space $X$ will be
denoted by $C(X)$.
The material in this
section is mostly well known. For more details and undefined
concepts see for example \cite{GM, G4, GM-suc}.

\subsection{Cyclic $G$-spaces}
\label{s:cycl}

A function $f\in C(X)$ on a $G$-space $X$ {\em comes from\/} a
compact $G$-space $Y$ if there exist a $G$-compactification $\nu:
X \to Y$ (so, $\nu$ is onto if $X$ is compact) and a function
$f_0 \in C(Y)$ such that $f=f_0 \circ \nu$.
Then necessarily, $f$ is right uniformly continuous
(notation: $f \in$ RUC$(X)$) that is, the orbit map
$f: G \to C(X), \ g \mapsto fg$ is norm continuous.

%
%
For every $G$-space $X$ a function $f: X \to \R$
lies in RUC$(X)$ iff it comes from a compact $G$-space $Y$.
Among all possible $G$-compactifications $\nu: X \to Y$ of a
$G$-space $X$ such that $f$ comes from $(\nu,Y)$ there exists the
smallest one. One may define it by the smallest closed unital $G$-subalgebra
${\mathcal A_f}$ of RUC$(X)$ generated by the orbit $fG$ of $f$ in
RUC$(X)$. 
Denote by $X_f$ the Gelfand space
of the algebra ${\mathcal A_f}$. Then the corresponding
$G$-compactification ${\a}_f: X \to Y:=X_f$ is the
required
one. We call
$X_f$ the
\emph{cyclic $G$-system} of $f$. 
The function $f$ comes from $X_f$.
There exists a continuous function $f_0: X_f \to \R$
such that $f=f_0 \circ {\a}_f$ and furthermore
the family
$f_0G$ separates points of $X_f$.

\subsection{Enveloping semigroups}
\label{s:env}

The {\em enveloping (or Ellis) semigroup\/} $E=E(G,X)=E(X)$ of a
compact
dynamical $G$-system $X$ is defined as the closure in $X^X$ (with
its compact pointwise convergence topology) of the set
$\breve{G}=\{\breve{g}: X \to X\}_{g \in G}$ of translations.
With the operation of composition
of maps $E(X)$ is a right topological semigroup. Moreover, the map
$$j=j_X: G \to E(X),\ g \mapsto \breve{g}$$ is a right topological
semigroup compactification of $G$. The compact space $E(X)$
becomes a $G$-space with respect to the natural action
$$G \times E(X) \to E(X), \ \ \ \ (gp)(x)=gp(x).$$

Let $E=E(X)$ be the enveloping semigroup of a compact $G$-system
$X$. For every $f \in C(X)$ define
$$
E^f:=\{f \circ p: X \to \R \}_{p \in E}.
$$

Then $E^f$ is a pointwise compact subset of $\R^X$, being a
continuous image of $E$ under the map $q_f: E \to E^f, \hskip
0.2cm p \mapsto f \circ p$. The orbit $fG$ of $f$ is a dense subset of
$E^f$. It follows that $E^f=\cls_p(fG)$.

%
%

\subsection{Banach representations of dynamical systems}
\label{s:Ban Rep}

Let $V$ be a Banach space. Denote by $\Iso(V)$ the topological
group of all linear isometries of $V$ onto itself, equipped with the
pointwise convergence topology.

\begin{defn} \label{d:repr} \cite{Menz} Let $X$ be a $G$-space.
A continuous \emph{representation} of $(G,X)$ on a Banach space
$V$ is a pair
$$
h: G \to {\Iso}(V), \ \ \ \alpha: X \to V^*
$$
where
$h: G \to {\Iso}(V)$ is a continuous co-homomorphism and
$\alpha: X \to V^*$ is a weak$^*$ continuous bounded
$G$-map with respect to the {\em dual action\/} $G \times V^* \to V^*,
\ (g\varphi)(v):=\varphi(h(g)(v))$.
We say that a continuous representation $(h,\alpha)$ is
\emph{faithful} when $\alpha$ is a topological embedding.
\end{defn}

Every compact $G$-space $X$ admits a canonical faithful representation
on the Banach space $C(X)$. A natural question is to characterize
dynamical systems according to their representability on nice
Banach spaces.

\subsubsection{Reflexive representations and WAP systems}

A compact dynamical system $(G,X)$ is weakly almost periodic (WAP)
if $C(X)=WAP(X)$. As usual a continuous function $f: X \to \R$ is
WAP if the weakly closure of the orbit $fG$ is weakly compact in
$C(X)$.
 A compact $G$-space $X$ is WAP iff every element $p \in E(X)$ is
 a continuous selfmap of $X$ (see Ellis and Nerurkar \cite{EN}).


\begin{fact}
\emph{(See \cite{Menz})}
Let $X$ be a compact metric $G$-space. The following conditions
are equivalent:
\begin{enumerate}
\item
$(G,X)$ is weakly almost periodic (WAP).
\item
$(G,X)$ is reflexively representable (that
is, admits a faithful representation on a
reflexive Banach space).
\end{enumerate}
\end{fact}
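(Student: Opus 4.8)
The plan is to prove the two implications separately. The direction $(2)\Rightarrow(1)$ is soft and goes through matrix coefficients together with the Stone--Weierstrass theorem and the classical fact that $\mathrm{WAP}(X)$ is a uniformly closed, $G$-invariant, unital subalgebra of $C(X)$. The direction $(1)\Rightarrow(2)$ is the substantial one: I would first produce a reflexive representation realizing a \emph{single} WAP function by means of the Davis--Figiel--Johnson--Pe\l czy\'nski interpolation technique \cite{DFJP}, and then amalgamate countably many of these (using that $X$, being compact metrizable, has separable $C(X)$) into one faithful representation on an $l_2$-sum of reflexive spaces.

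For $(2)\Rightarrow(1)$: let $(h,\alpha)$ be a faithful representation of $(G,X)$ on a reflexive $V$, and put $M=\sup_{x\in X}\|\alpha(x)\|$. For $v\in V$ define $f_v\in C(X)$ by $f_v(x)=\langle\alpha(x),v\rangle$; this is continuous since $\alpha$ is weak$^*$ continuous, and $\|f_v\|_\infty\le M\|v\|$. The co-homomorphism property gives $(f_vg)(x)=f_v(gx)=\langle\alpha(x),h(g)v\rangle=f_{h(g)v}(x)$, so $f_vG=\{f_w:w\in h(G)v\}$. The bounded linear, hence weak-to-weak continuous, operator $w\mapsto f_w$ from $V$ to $C(X)$ carries the weakly compact set $\overline{h(G)v}^{\,w}$ (weakly compact because $V$ is reflexive and the orbit is bounded) onto a weakly compact subset of $C(X)$ containing $f_vG$; hence $f_v\in\mathrm{WAP}(X)$. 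Since $\alpha$ is a topological embedding, the family $\{f_v:v\in V\}$ separates the points of $X$; adding the constants and invoking Stone--Weierstrass, the closed subalgebra they generate is all of $C(X)$. As $\mathrm{WAP}(X)$ is a uniformly closed $G$-invariant unital subalgebra containing this family, $\mathrm{WAP}(X)=C(X)$, i.e. $(G,X)$ is WAP.

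For $(1)\Rightarrow(2)$: assume $C(X)=\mathrm{WAP}(X)$ and fix $f\in C(X)$. Then $f\in\mathrm{RUC}(X)$, the orbit $fG$ has weakly compact closure in $C(X)$, and by Krein's theorem the set $W_f:=\cls_p\,\overline{\mathrm{co}}\big(fG\cup(-fG)\big)$ is a weakly compact, convex, symmetric, $G$-invariant (for the isometric right-translation action of $G$ on $C(X)$) subset containing $f$. Applying the DFJP construction \cite{DFJP} to $W_f\subset C(X)$ yields a reflexive Banach space $V_f$ (with norm the Minkowski gauge of an interpolation body built from $W_f$) and a bounded injective operator $j_f\colon V_f\to C(X)$ with $W_f\subset j_f(B_{V_f})$. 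Since this construction depends only on $W_f$, which is $G$-invariant, and on the ambient norm, which $G$ preserves, the action lifts to an isometric action $h_f\colon G\to\Iso(V_f)$ making $j_f$ a $G$-map, and $g\mapsto h_f(g)v$ is norm-continuous for each $v\in V_f$. Composing the evaluation map $x\mapsto\delta_x$, $X\to C(X)^*$, with $j_f^*\colon C(X)^*\to V_f^*$ gives a bounded weak$^*$ continuous $G$-map $\alpha_f\colon X\to V_f^*$; writing $f=j_f(v_0)$ with $\|v_0\|\le1$ we get $\langle\alpha_f(x),v_0\rangle=\langle\delta_x,j_f(v_0)\rangle=f(x)$, so $(h_f,\alpha_f)$ is a reflexive representation through which $f$ factors. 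Now choose a countable family $\{f_n\}_{n\in\N}\subset C(X)$ separating the points of $X$, form the reflexive space $V:=\big(\bigoplus_{n}V_{f_n}\big)_{l_2}$ with diagonal isometric action $h:=\bigoplus_n h_{f_n}$, and set $\alpha:=\bigoplus_n c_n\alpha_{f_n}\colon X\to V^*=\big(\bigoplus_n V_{f_n}^*\big)_{l_2}$ for scalars $c_n>0$ with $\sum_n c_n^2\sup_x\|\alpha_{f_n}(x)\|^2<\infty$; then $\alpha$ is well-defined, bounded, weak$^*$ continuous and $G$-equivariant. Each $f_n$ is recovered from a single coordinate of $\alpha$, so $\alpha$ separates points, and being a continuous injection from a compact space into the Hausdorff space $(V^*,w^*)$ it is a topological embedding. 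Thus $(h,\alpha)$ is a faithful reflexive representation of $(G,X)$.

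The main obstacle is the equivariant and continuous version of the DFJP step: one must check that for a $G$-invariant weakly compact convex symmetric $W_f$ the interpolation space $V_f$ genuinely carries an isometric $G$-action (here the group-invariance of $W_f$ and the isometry of the $C(X)$-action are used), that $j_f$ intertwines the actions, and --- crucially for Definition \ref{d:repr} --- that $h_f\colon G\to\Iso(V_f)$ is continuous for the pointwise (strong operator) topology; this last point reduces to the norm-continuity of $g\mapsto fg$ (that is, $f\in\mathrm{RUC}(X)$) together with an estimate of the interpolation norm in terms of the ambient norm and the gauge of $W_f$. Everything else --- Krein's theorem, Stone--Weierstrass, the subalgebra property of $\mathrm{WAP}(X)$, reflexivity of $l_2$-sums of reflexive spaces, and the bookkeeping with the weights $c_n$ --- is routine.
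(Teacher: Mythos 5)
Your argument is correct and is essentially the standard one: it is the method of the cited source \cite{Menz}, and it runs exactly parallel to what this paper does for the Rosenthal case --- the DFJP interpolation applied to the ($G$-invariant, symmetrized, convex) weakly compact hull of an orbit as in Theorem \ref{t:general}, the $l_2$-sum amalgamation as in Lemma \ref{l:system}, and the soft converse via matrix coefficients and Stone--Weierstrass as in Theorem \ref{t:1dir}. The one point you rightly flag as needing care (strong-operator continuity of $h_f$ via the norm equivalences and tail estimates for $N$) is handled the same way in the paper's Claim-by-Claim proof of Theorem \ref{t:general}, so no gap remains.
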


\subsubsection{Asplund representations, RN and HNS systems}
\label{subs:RN}

A dynamical system is {\em Radon--Nikod\'ym} (RN) if it admits a
faithful
representation on an Asplund Banach space \cite{Menz, GM}.
If $G=\{1\}$, we get
the class of Radon--Nikod\'ym compact spaces in the sense of
Namioka \cite{N}.

We recall the concept of non-sensitivity
(see for instance \cite{GW1,AAB,Menz,GM,GM-fixed} and the references
therein). Let $d$ be a compatible metric on a compact $G$-system
$X$. We say that $(G,X)$ is {\em non-sensitive} if for every
$\ep>0$ there exists a non-empty open set $O\sbs X$ such that for
every $g\in G$ the set $gO$ has $d$-diameter $<\ep$.
%
$(G,X)$ is {\em hereditarily non-sensitive} (HNS)
if all its closed $G$-subspaces are non-sensitive.

For a nonmetrizable version of HNS in terms of uniform structures
and some related properties we refer to \cite{GM}.

\begin{fact} \label{t:AspRep} \emph{(See \cite{GM} and \cite{GMU})}
 Let $X$ be a compact metric $G$-space. The
following conditions are equivalent:
\begin{enumerate}
\item
$(G,X)$ is hereditarily non-sensitive (HNS).
\item
$(G,X)$ is RN, that is, admits a
faithful
representation on an Asplund Banach space.
\item
the enveloping semigroup $E(X)$ is metrizable.
\end{enumerate}
\end{fact}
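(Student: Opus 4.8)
The plan is to prove the cycle of implications $(2)\Rightarrow(1)\Rightarrow(3)\Rightarrow(2)$, using the fragmentability toolkit of Section~\ref{sec:fr}, the enveloping semigroup characterization of the HNS property by fragmented families (Fact~\ref{t:HNS}), and, for the last implication, a Davis--Figiel--Johnson--Pe\l{}czy\'nski interpolation \cite{DFJP}. The implication $(2)\Rightarrow(1)$ is the soft one. Let $(h,\alpha)$ be a faithful representation of $(G,X)$ on an Asplund space $V$, so that $\alpha$ embeds $X$ as a bounded weak${}^*$-compact subset of $V^*$ and $G$ acts on $V^*$ through the adjoint isometries $h(g)^*$. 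Given a closed $G$-subspace $Y\subseteq X$, the set $\alpha(Y)$ is a bounded subset of $V^*$, hence $(w^*,\mathrm{norm})$-fragmented since $V$ is Asplund (Remark~\ref{r:fr}.4). For $\ep>0$ pick a weak${}^*$-open $U$ with $U\cap\alpha(Y)$ nonempty and of norm-diameter $<\ep$; then $O:=\alpha^{-1}(U)\cap Y$ is a nonempty open subset of $Y$, $\alpha(O)\subseteq U\cap\alpha(Y)$, and since $\alpha$ is equivariant and each $g$ acts isometrically on $V^*$ we get $\diam_{\mathrm{norm}}\alpha(gO)=\diam_{\mathrm{norm}}\alpha(O)<\ep$ for all $g\in G$. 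Since a norm-small subset of $\alpha(Y)$ is small for the weak${}^*$ uniformity — which, $\alpha(Y)$ being weak${}^*$-compact, is the unique uniformity compatible with the topology of $Y$ — this says precisely that $(G,Y)$ is non-sensitive; as $Y$ was arbitrary, $(G,X)$ is HNS.

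For $(1)\Rightarrow(3)$, assume $(G,X)$ is HNS, so by Fact~\ref{t:HNS} the family $\{p:X\to X\}_{p\in E(X)}$ is fragmented. Fix $f\in C(X)$; by the uniform continuity of $f$ on the compact space $X$, post-composition with $f$ produces a fragmented family $E^f=\{f\circ p\}=\cls_p(fG)\subseteq\R^X$, which is moreover pointwise compact and uniformly bounded. The next ingredient is a topological lemma: a pointwise compact, uniformly bounded, fragmented family $F\subseteq\R^X$ on a hereditarily Lindel\"of space (such as a compact metric $X$) is metrizable in the pointwise topology. One proves it by running, at each scale $1/n$, a transfinite ``derivation'' on $X$ driven by the fragmentation condition; since $X$ is hereditarily Lindel\"of this derivation stabilizes at a countable ordinal, and the countably many open sets it produces distill a countable $D\subseteq X$ on which pointwise convergence of nets in $F$ forces pointwise convergence on all of $X$, so the restriction map $F\to\R^D$ is a continuous injection of the compactum $F$ into a metrizable space. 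Applying this to a dense sequence $\{f_n\}$ in $C(X)$ (available as $X$ is metric), each $E^{f_n}$ is metrizable, and the diagonal map $E(X)\to\prod_n E^{f_n}$, $p\mapsto(f_n\circ p)_n$, is a continuous injection (the $f_n$ separate the points of $X$), hence a homeomorphic embedding of $E(X)$ into a metrizable space. Thus $E(X)$ is metrizable.

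The implication $(3)\Rightarrow(2)$ is the heart of the matter and the step I expect to be the main obstacle. The cyclic systems $X_f$ (for $f$ in a dense sequence of $C(X)$) are factors of $X$, so their enveloping semigroups are factors of $E(X)$, hence metrizable. The delicate first point is that metrizability of $E(X_f)$ forces the family $\{\bar p:X_f\to X_f\}_{\bar p\in E(X_f)}$ to be fragmented — essentially the converse of the topological lemma above, and genuinely hard because the evaluation $X_f\times E(X_f)\to X_f$, $(x,\bar p)\mapsto\bar p x$, need not be separately continuous (elements of the enveloping semigroup are not continuous maps of $X_f$), so Namioka-type joint-continuity arguments are not directly available. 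Granting this, $E^{f_0}=\cls_p(f_0G)=\{f_0\circ\bar p\}$ is a fragmented family of Baire $1$ functions on the compact metric space $X_f$ (Lemma~\ref{Baire-1}), which is bounded and separates points. Let $V_0$ be the closed $G$-invariant subspace of $C(X_f)$ generated by $f_0G$, on which $G$ acts isometrically by translations, and let $W$ be its closed absolutely convex hull, a bounded symmetric convex subset of $V_0$ whose weak${}^*$-closure in $V_0^{**}$, restricted to $X_f\subseteq B_{V_0^*}$, is exactly the fragmented family $E^{f_0}$. The Davis--Figiel--Johnson--Pe\l{}czy\'nski interpolation \cite{DFJP} applied to $(V_0,W)$ yields a Banach space $V$, a bounded injection $J:V\to V_0$ with $W\subseteq J(B_V)$, a continuous co-homomorphism $h:G\to\Iso(V)$, and a faithful weak${}^*$-continuous $G$-embedding $\alpha:X_f\to V^*$. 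The crucial claim — the real technical core — is that $V$ is \emph{Asplund}: this is the Asplund analogue of the classical ``reflexive'' DFJP construction, and it holds because the family $E^{f_0}$ is fragmented, which forces $V^*$ to have the Radon--Nikod\'ym property. Finally, since $C(X)$ is separable, a dense sequence $\{f_n\}$ of such functions yields faithful representations of the factors $X\to X_{f_n}$ on separable Asplund spaces $V_n$, and assembling them (with suitable scalars) on the Asplund space $\bigl(\sum_n V_n\bigr)_{c_0}$ produces a faithful Asplund representation of $(G,X)$. The two substantial steps here — the passage from metrizability of $E(X_f)$ to fragmentability of the enveloping semigroup action, and the transfer of that fragmentability to the Radon--Nikod\'ym property of $V^*$ — are carried out in \cite{GM} and \cite{GMU}.
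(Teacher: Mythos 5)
The paper itself gives no proof of Fact \ref{t:AspRep}: it is imported as a cited result from \cite{GM} and \cite{GMU}, so there is no internal argument to measure yours against, and your proposal should be judged on its own terms. Where you are explicit, the outline is sound. Your $(2)\Rightarrow(1)$ is correct: the dual action is by surjective linear isometries of $V^*$, bounded subsets of the dual of an Asplund space are $(w^*,\mathrm{norm})$-fragmented (Remarks \ref{r:fr}.4), and a norm-$\delta$-small subset of $\alpha(Y)$ is small for any prescribed basic weak$^*$ entourage once $\delta$ is chosen against the finitely many defining vectors; since the weak$^*$ uniformity is the unique compatible uniformity on the compact set $\alpha(Y)\cong Y$, this yields non-sensitivity of every closed subsystem. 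Your $(1)\Rightarrow(3)$ is also correct, and your ``topological lemma'' is exactly the equivalence (b)$\Leftrightarrow$(c) of Lemma \ref{l:4} (which the paper proves via \cite[Lemma 6.5]{GM} and \cite[Theorem 4.1]{N} rather than by a transfinite derivation, but with the same content), followed by the standard diagonal embedding $E(X)\hookrightarrow\prod_n E^{f_n}$. In $(3)\Rightarrow(2)$, however, you simultaneously overcomplicate and under-prove. The step you single out as ``genuinely hard'' --- passing from metrizability of the enveloping semigroup to fragmentability --- needs no statement about the maps $p:X_f\to X_f$ at all: $E^{f}=q_{f}(E(X))=\cls_p(fG)$ is a continuous image of $E(X)$, hence compact metrizable, and Lemma \ref{l:4}, (b)$\Rightarrow$(c), proved via Lemma \ref{l:FrFa}.2 (which requires only continuity of each $\tilde x:\cls_p(fG)\to\R$ and of the densely many $\tilde f$, $f\in fG$, not separate continuity of the evaluation in the problematic variable), already gives that $fG$ is a fragmented family. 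What genuinely remains --- the Asplund analogue of the DFJP interpolation, producing a faithful Asplund representation from a point-separating $G$-invariant fragmented family --- you correctly identify but do not supply; it is precisely the content of \cite{GM} and is not reproduced in this paper either. Your proposal is therefore an accurate roadmap consistent with the known proofs, but it is not a self-contained proof: its two load-bearing steps are delegated to the same references the paper cites.
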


\begin{fact} \cite{GM} \label{t:HNS} Let $X$ be a compact $G$-space. The
following conditions are equivalent:
\begin{enumerate}
\item
$(G,X)$ is HNS.
\item
$(G,X)$ is RN-approximable, that is, admits
sufficiently many representations on Asplund Banach spaces.
\item $E(X)=\{p: X \to X\}_{p \in E(X)}$ is a fragmented family.
\item $\breve{G}=\{\breve{g}: X \to X\}_{g \in G}$
is a fragmented family.
\end{enumerate}
\end{fact}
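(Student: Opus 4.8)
The plan is to prove the equivalence of the four conditions in Fact~\ref{t:HNS} by establishing a cycle of implications, using the already-available Fact~\ref{t:AspRep} (which handles the metrizable case) together with the closure and factorization properties of fragmented families from Lemmas~\ref{l:fr-f-cls}, \ref{l:FrFa} and \ref{simple-fr}. First I would observe that $(3)\Leftrightarrow(4)$ is immediate: since $\breve G$ is dense in $E(X)$ in the topology of pointwise convergence, and the property of being a fragmented family is preserved under taking pointwise closures (Lemma~\ref{l:fr-f-cls}.1), the family $\{\breve g : g \in G\}$ is fragmented if and only if its pointwise closure $E(X) = \cls_p(\breve G)$ is. So it suffices to deal with $(1)$, $(2)$ and $(3)$.

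\smallskip
Next I would handle $(2)\Rightarrow(3)$ (or $(2)\Rightarrow(4)$). Suppose $(G,X)$ is RN-approximable, so there is a family of representations $(h_i,\alpha_i)$ of $(G,X)$ on Asplund spaces $V_i$, with the $\alpha_i : X \to V_i^*$ jointly separating points of $X$ (and with images in bounded, hence by rescaling unit, balls). By Example~\ref{e:fr-fam}.1, for each $i$ the ball $B_{V_i}$ is a fragmented family of functions on $(B_{V_i^*}, w^*)$; composing with the weak$^*$-continuous $G$-map $\alpha_i$ and using the family-version of Lemma~\ref{simple-fr}.2, we get that the pulled-back family $\{v \circ \alpha_i : v \in B_{V_i}\}$ is a fragmented family of functions $X \to \R$. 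Taking the union over $i$ of all these functions gives a fragmented family $F \subset C(X)$ that separates the points of $X$. Then the natural map $\pi_F : X \to \R^F$ (or rather into the relevant compact image) is $(\tau,\mu_U)$-fragmented, and since $F$ separates points this map is injective; but then for each $p\in E(X)$, one checks using equivariance and Lemma~\ref{simple-fr}.3 that $p : X \to X$ is fragmented as a family, i.e. $E(X)$ is a fragmented family. (The bookkeeping that translates ``$F$ separates points and is a fragmented family'' into ``$\breve G$ is a fragmented family'' is the one slightly technical point here; it is where Lemma~\ref{simple-fr}.3 and the identification of $\xi$ with the compact uniformity on $X$ are used, exactly as in the proof of Lemma~\ref{l:fr-f-cls}.2.)

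\smallskip
For $(3)\Rightarrow(1)$, I would argue that if $E(X)$ is a fragmented family then every closed $G$-subsystem $Y \subset X$ is non-sensitive. Restricting, $\{p|_Y : p \in E(Y)\}$ is still a fragmented family of maps $Y \to Y$ (restriction to a closed, hence compact, invariant subspace preserves fragmentation of the family), and unravelling the definition of fragmented family with $A = Y$, $Y$ metrizable, one gets: for every $\ep>0$ there is a nonempty open $O \subset Y$ with $\diam_d(\,g O\,) \le \ep$ for all $g \in G$ simultaneously, which is precisely non-sensitivity of $(G,Y)$. Hence $(G,X)$ is HNS.

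\smallskip
Finally, $(1)\Rightarrow(2)$ is the main obstacle and the place where the substantive work lies. Here I would invoke Fact~\ref{t:AspRep} in the following way: given $f \in \RUC(X)$, pass to the cyclic $G$-factor $X_f$ of $f$ (Section~\ref{s:cycl}); one shows that if $X$ is HNS then each metrizable cyclic factor $X_f$ is also HNS, hence by Fact~\ref{t:AspRep} the system $(G,X_f)$ is RN, i.e. admits a faithful representation on an Asplund space $V_f$. Composing the factor map $f_\sharp : X \to X_f$ with this representation yields a (no longer faithful) Asplund representation of $(G,X)$ from which $f$ can be recovered. Letting $f$ range over $\RUC(X)$ — which separates points of the compact space $X$ — produces sufficiently many Asplund representations, i.e. $(G,X)$ is RN-approximable. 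The delicate step is verifying that the HNS property descends to cyclic factors: this requires showing non-sensitivity is inherited by $G$-factors (which is where one must be careful about how the compatible metric on $X_f$ relates to that on $X$), and that closed $G$-subspaces of $X_f$ are pulled back from closed $G$-subspaces of $X$; I would borrow this from the non-metrizable HNS machinery of \cite{GM}. An alternative, and perhaps cleaner, route for $(1)\Rightarrow(3)$ directly is to use Lemma~\ref{l:FrFa}.1 with the separately continuous evaluation map $w : X \times E(X) \to X \to \R$ built from a countable point-separating family in $C(X)$, deducing that the relevant families are fragmented; but reconciling \v{C}ech-completeness/metrizability hypotheses there with a possibly non-metrizable $X$ is itself the crux, so I expect the argument via cyclic factors and Fact~\ref{t:AspRep} to be the intended and most robust one.
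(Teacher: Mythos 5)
First, a point of order: the paper does not prove Fact~\ref{t:HNS} at all --- it is imported verbatim from \cite{GM} --- so there is no internal proof to compare against. Judged on its own terms, your cycle is sound in three of its four legs. The equivalence $(3)\Leftrightarrow(4)$ via Lemma~\ref{l:fr-f-cls}.1 is exactly right; $(2)\Rightarrow(3)$ works as you describe once one reduces, via Remark~\ref{r:fr}.2, to the subbase of the unique uniformity of the compact space $X$ generated by the point-separating functions $v\circ\alpha_i$ ($v\in B_{V_i}$), and uses the invariance of the balls $B_{V_i}$ under $h_i(G)$ to see that $\{f\circ\breve g: g\in G\}$ sits inside the fragmented family $\{w\circ\alpha_i: w\in B_{V_i}\}$; and $(3)\Rightarrow(1)$ is definition-unravelling (in the non-metrizable case one must of course use the uniform-structure version of non-sensitivity, which this paper delegates to \cite{GM}).

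The genuine gap is in $(1)\Rightarrow(2)$. Your reduction to Fact~\ref{t:AspRep} requires the cyclic factor $X_f$ to be metrizable, but $X_f$ is the Gelfand space of the algebra generated by the orbit $fG$, which is norm-separable only when $G$ (or at least $fG$) is separable; for a general topological group acting on a general compact $X$ the factors $X_f$ need not be metrizable and Fact~\ref{t:AspRep} does not apply. Moreover, Fact~\ref{t:AspRep} is itself established in \cite{GM,GMU} by means of the very equivalences of Fact~\ref{t:HNS}, so as a reconstruction of the original argument your reduction is circular. What \cite{GM} actually does for $(1)\Rightarrow(2)$ is the Asplund analogue of Theorem~\ref{t:general} of the present paper: from HNS one first shows that each orbit $fG$ is a fragmented family (the localized form of $(1)\Rightarrow(3)$), and then feeds the symmetrized convex hull of $fG$ into the Davis--Figiel--Johnson--Pe{\l}czy\'nski interpolation scheme, verifying that the resulting space $V$ is Asplund; letting $f$ range over $C(X)$ yields sufficiently many such representations. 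Finally, your proposed ``alternative route'' via Lemma~\ref{l:FrFa}.1 cannot be repaired: separate continuity of the evaluation $X\times E(X)\to X$ is precisely the WAP condition (Ellis--Nerurkar), so that lemma is simply unavailable for general HNS systems.
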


\section{Banach space background and the dynamical BFT dichotomy}
\label{s:Ban}

\subsection{Bourgain-Fremlin-Talagrand theorems}

Recall that a topological space $K$ is a {\it Rosenthal compactum}
\cite{Godefroy} if it is homeomorphic to a pointwise compact
subset of the space $\calB_1(X)$ of functions of the first Baire
class on a Polish space $X$. All metric compact spaces are
Rosenthal. An example of a separable non-metrizable Rosenthal
compact is the {\it Helly compact}
\cite{Eng}
of all (not only strictly)
increasing selfmaps of $[0,1]$ in the pointwise topology. Another
is the ``two arrows" space
\cite{Eng}
of Alexandroff and Urysohn.
Recall that a topological space $K$ is {\em Fr\'echet}
(or, {\em Fr\'echet-Urysohn} \cite{Eng})
 if for every $A\subset K$ and every $x\in \cls(A)$ there exists a
sequence of elements of $A$ which converges to $x$.

The following theorem is due to Bourgain-Fremlin-Talagrand
\cite[Theorem 3F]{BFT}, generalizing a result of Rosenthal. The
second assertion (BFT dichotomy) is presented as in the book of
Todor\u{c}evi\'{c} \cite{To-b} (see Proposition 1 of Section 13).

\begin{fact} \label{BFT}
\begin{enumerate}
\item
Every Rosenthal compact space $K$ is Fr\'echet.
\item (BFT dichotomy)
Let $X$ be a Polish space and let $\{f_n\}_{n=1}^\infty \subset
C(X)$ be a
pointwise bounded sequence of bounded functions.
Let $K$ be the
pointwise closure of $\{f_n\}_{n=1}^\infty$ in $\R^X$. Then either
$K \subset \calB_1(X)$ (i.e. $K$ is Rosenthal compact) or $K$
contains a homeomorphic copy of $\beta\N$.
\end{enumerate}
\end{fact}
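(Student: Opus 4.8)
\textbf{Proof proposal for Fact \ref{BFT}.}

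The plan is to treat this as a citation-assembly statement rather than a from-scratch proof: both assertions are due to Bourgain–Fremlin–Talagrand, and the intended ``proof'' is really a careful deduction of the clean statement from the form in which they appear in \cite{BFT} and \cite{To-b}. First I would handle part (1). The claim that every Rosenthal compactum $K$ is Fr\'echet is precisely \cite[Theorem 3F]{BFT}; I would simply recall that a Rosenthal compactum is by definition a pointwise-compact $K \subset \calB_1(X)$ with $X$ Polish, and cite the BFT result that such $K$ are angelic (every relatively countably compact subset is relatively compact and its closure is governed by convergent sequences), which in particular yields the Fr\'echet–Urysohn property. No independent argument is needed beyond pointing to the reference; if one wanted to be more self-contained one could note that angelicity follows from the fact that $\calB_1(X)$ with the pointwise topology is angelic for Polish $X$, a consequence of the Baire-class-one structure and a diagonal-sequence extraction.

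For part (2), the BFT dichotomy, I would again quote the statement as it is formulated in Todor\u{c}evi\'{c}'s book (Proposition 1 of Section 13), which is essentially verbatim what is asserted here. The content is: given a pointwise-bounded sequence $\{f_n\} \subset C(X)$ on a Polish space $X$, let $K = \cls_p\{f_n\} \subset \R^X$; then Rosenthal's $\ell_1$-dichotomy applied inside $C(X)$ (or rather, the refinement in \cite{BFT}) splits into the case where every subsequence has a further pointwise-convergent subsequence — which forces $K \subset \calB_1(X)$, since pointwise limits of sequences of continuous functions on a metrizable space are Baire class one by Lemma \ref{l:Baire1}(1) — and the complementary case, where some subsequence is an $\ell_1$-sequence, and an independence/Rosenthal-style argument produces inside $K$ a copy of $\beta\N$ (the closure of a suitably ``independent'' family of $\{0,1\}$-valued cluster functions realizes all ultrafilters). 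I would make explicit that the two alternatives are mutually exclusive: $\beta\N$ is not Fr\'echet (no nontrivial sequence converges in $\beta\N \setminus \N$), whereas $\calB_1(X)$-compacta are Fr\'echet by part (1), so $K$ cannot simultaneously lie in $\calB_1(X)$ and contain a copy of $\beta\N$.

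The main obstacle, such as it is, is purely expository: making sure the hypotheses match. In particular one must check that ``pointwise bounded'' in the statement is exactly the hypothesis under which BFT/Todor\u{c}evi\'{c} work (it is — pointwise boundedness is what makes $K$ a compact subset of $\R^X$ via Tychonoff, and no uniform bound is needed), and that no separability of $X$ beyond Polishness is being smuggled in. Since the whole of this section is explicitly labelled background and the result is attributed, I would not reprove the hard core of BFT (the $\beta\N$-embedding via independent sequences and the measure-theoretic extraction); I would instead give the two-sentence reduction above and cite \cite{BFT, To-b}, noting only the angelicity consequence needed to see the dichotomy is genuine. The one place a reader might want a line of detail is the implication ``every subsequence has a pointwise-convergent sub-subsequence $\Rightarrow K \subset \calB_1(X)$'', and there I would invoke Lemma \ref{l:Baire1}(1) together with the observation that every point of $K$ is then a limit of a sequence from $\{f_n\}$, hence Baire class one.
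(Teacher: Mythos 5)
This is a cited background result (a ``Fact''), and the paper gives no proof beyond attributing part (1) to \cite[Theorem 3F]{BFT} and part (2) to Todor\u{c}evi\'{c}'s formulation in \cite{To-b}; your citation-assembly, including the angelicity remark and the observation that the two alternatives are mutually exclusive, matches that approach exactly. Nothing further is needed.
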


Clearly, $\beta \N$ the Stone-\v{C}ech compactification of the
natural numbers $\N$, is not Fr\'echet, and hence it
is not a Rosenthal compact space.


\begin{defn} \label{d:Ros-F}
Let $X$ be a topological space. We say that a subset $F\subset
C(X)$ is a \emph{Rosenthal family} (for $X$) if $F$ is norm bounded and
the pointwise closure $\cls_p(F)$ of $F$ in $\R^X$ consists of fragmented maps,
that is,
$$\cls_p(F) \subset {\mathcal F}(X).$$
\end{defn}

In the following result we combine two theorems from Talagrand's book \cite{Tal}.
Here we reformulate assertion (3)
in terms of ${\mathcal F}(X)$ using the equality ${\mathcal F}(X)=B_r'(X)$
(Corollary \ref{c:fr=B1}.1).

\begin{fact} \label{t:Talagr}
\emph{(Talagrand \cite{Tal})}
Let $X$ be a compact space
and $F \subset C(X)$ a bounded subset.
The following conditions are equivalent:
\begin{enumerate}
\item
$F$ does not contain a subsequence equivalent to the unit
basis of $l_1$.
\item
Each sequence in $F$ has a pointwise convergent subsequence in
$\R^X$ (i.e., $F$ is sequentially precompact in $\R^X$).
\item
${\cls}_p(F) \subset {\mathcal F}(X)$
(i.e., $F$ is a Rosenthal family for $X$).
\item
Every sequence in $F$ has a weak-Cauchy subsequence.
\item Every countable subfamily $S$ of $F$ is a Rosenthal family for $X$.
%
\end{enumerate}
\end{fact}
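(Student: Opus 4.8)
First I would deduce the ``soft'' equivalences $(1)\Leftrightarrow(2)\Leftrightarrow(4)$ from Rosenthal's $l_1$-theorem and the bounded convergence theorem, and then obtain the remaining equivalences from the Bourgain--Fremlin--Talagrand dichotomy via a Gelfand-quotient reduction. For the soft part: Rosenthal's theorem \cite{Ros0}, read inside the Banach space $C(X)$, says exactly that a bounded sequence has either a weak-Cauchy subsequence or an $l_1$-subsequence, and not both, which is $(1)\Leftrightarrow(4)$. For $(4)\Rightarrow(2)$, a weak-Cauchy sequence in $C(X)$ converges pointwise on $X$ (evaluate against the Dirac functionals $\delta_x\in C(X)^{*}$), so a weak-Cauchy subsequence is in particular pointwise convergent in $\R^X$. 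For $(2)\Rightarrow(1)$, a uniformly bounded pointwise convergent sequence in $C(X)$ is weak-Cauchy by the bounded convergence theorem (integrate against $\mu\in C(X)^{*}=M(X)$), so such a sequence cannot be an $l_1$-sequence; since a subsequence of an $l_1$-sequence is again one, $(2)$ forbids $l_1$-sequences in $F$. This also records, for bounded sequences in $C(X)$ with $X$ compact, the identity ``no $l_1$-subsequence'' $=$ ``relatively sequentially compact in $\R^X$''.

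For $(3)$ and $(5)$ the device is a reduction to Polish quotients. Given a \emph{countable} $S=\{f_n\}\subseteq F$, the closed unital subalgebra $\mathcal A_S\subseteq C(X)$ generated by $S$ is separable, so its Gelfand space $M$ is compact metrizable (hence Polish) and $\mathcal A_S=q^{*}(C(M))$ for some continuous surjection $q\colon X\to M$, with $f_n=\bar f_n\circ q$ and $\bar f_n\in C(M)$. Since $q$ is onto, $\bar h\mapsto\bar h\circ q$ is a homeomorphism from $\cls_p\{\bar f_n\}\subseteq\R^M$ onto $\cls_p\{f_n\}\subseteq\R^X$, under which every element of $\cls_p\{f_n\}$ has the form $\bar h\circ q$ with $\bar h\in\cls_p\{\bar f_n\}$. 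Now apply the Bourgain--Fremlin--Talagrand theorem on the Polish space $M$ (Fact~\ref{f:BFT} together with its usual companion equivalences, \cite{BFT}). In the first case $\cls_p\{\bar f_n\}\subseteq\calB_1(M)$: then $\calB_1(M)=\mathcal F(M)$ by Lemma~\ref{Baire-1}.2, precomposition with the continuous $q$ preserves fragmentedness (Lemma~\ref{simple-fr}.2), so $\cls_p\{f_n\}\subseteq\mathcal F(X)$ and $S$ is a Rosenthal family for $X$. In the second case $\cls_p\{\bar f_n\}$ contains a copy of $\beta\N$: then $\cls_p\{\bar f_n\}$ is not contained in $\calB_1(M)=\mathcal F(M)$ (Rosenthal compacta are Fr\'echet, $\beta\N$ is not, Fact~\ref{BFT}), which pulls back to $\cls_p\{f_n\}\not\subseteq\mathcal F(X)$ via Lemma~\ref{simple-fr}.5 and Corollary~\ref{c:fr=B1}.1, and $S$ contains an $l_1$-sequence. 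Hence: for every countable $S\subseteq F$, $S$ contains an $l_1$-sequence if and only if $S$ is not a Rosenthal family for $X$.

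The loop now closes. An $l_1$-subsequence of $F$ is a countable object, so $(1)$ is equivalent to the assertion that no countable $S\subseteq F$ contains an $l_1$-sequence, which by the last displayed fact is precisely $(5)$; moreover $(3)\Rightarrow(5)$ trivially because $\cls_p(S)\subseteq\cls_p(F)$. It remains to prove $(2)\Rightarrow(3)$. Condition $(2)$ makes $F$ relatively countably compact in $\R^X$, hence $\cls_p(F)$ is angelic (the general form of the BFT theorem, \cite{BFT}); so any $g\in\cls_p(F)$ is the pointwise limit of a \emph{sequence} $(f_n)\subseteq F$, and since $\{f_n\}$ has no $l_1$-subsequence (by $(1)$), the reduction of the previous paragraph gives $g\in\cls_p\{f_n\}\subseteq\mathcal F(X)$. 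Thus $\cls_p(F)\subseteq\mathcal F(X)$, which is $(3)$.

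The main obstacle is the passage from the Polish dichotomy of Fact~\ref{f:BFT} to a general compact $X$: one must verify that \emph{both} BFT alternatives survive the Gelfand-quotient reduction (the Baire~$1$ side through $\calB_1(M)=\mathcal F(M)$ and the composition lemma, the $\beta\N$ side through the failure of the Fr\'echet property and Lemma~\ref{simple-fr}.5), and one must invoke the angelicity of $\cls_p(F)$ in order to realize an arbitrary point of the pointwise closure as a sequential limit from $F$ before the countable reduction applies. Everything else is bookkeeping.
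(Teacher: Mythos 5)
Your argument is correct, but it is a genuine reconstruction rather than the route the paper takes: the paper's proof of Fact~\ref{t:Talagr} is essentially a citation, quoting \cite[Theorem 14.1.7]{Tal} for $(1)\Leftrightarrow(2)\Leftrightarrow(3)$ and \cite[Theorem 7.3.1]{Tal} for $(2)\Leftrightarrow(4)$, and then observing that $(5)$ follows formally from the others because conditions $(1)$--$(4)$ are all checked on sequences. You instead derive the soft equivalences $(1)\Leftrightarrow(2)\Leftrightarrow(4)$ from Rosenthal's $l_1$-theorem plus dominated convergence, and then handle $(3)$ and $(5)$ by the separable-subalgebra/metrizable-quotient reduction combined with the Polish BFT dichotomy --- which is in fact the same device the paper uses elsewhere (Propositions~\ref{p:aft} and~\ref{c:conv}, via Lemma~\ref{l:genQuot} and Corollary~\ref{c:fr=B1}), so your proof makes visible the machinery that the quoted Fact hides. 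The one step where the countable reduction does not suffice is $(2)\Rightarrow(3)$ for nonmetrizable $X$: an arbitrary $g\in\cls_p(F)$ need not a priori lie in the pointwise closure of a countable subfamily, and you correctly close this gap by importing the Bourgain--Fremlin--Talagrand angelicity theorem for bounded relatively sequentially compact subsets of $C(X)$ with $X$ compact; note that this is an additional external input beyond the Polish dichotomy of Fact~\ref{f:BFT}, so your proof is self-contained only modulo that theorem (which is of the same depth as the result being proved). What each approach buys: the paper's citation is economical and appropriate for a stated Fact; yours exhibits exactly which pieces (Rosenthal's dichotomy, the Polish BFT dichotomy, angelicity) are responsible for which implications, and in particular shows that $(1),(2),(4),(5)$ are elementary relative to BFT while $(3)$ is where the real content sits.
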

\begin{proof}
The equivalence of (1), (2) and (3)
is a part of \cite[Theorem 14.1.7]{Tal}.
The equivalence of (2) and (4) is a part of \cite[Theorem 7.3.1]{Tal}.
The equivalence of the assertions (1), (2), (3) and (4) implies now
that each of them is equivalent also to (5).
\end{proof}

Let $F\subset C(X)$ be a norm bounded subset. Then the pointwise
closure $\cls_p(F)$ in $\R^X$ is compact.
The following lemma
examines four natural conditions expressing ``smallness" of
$F$ (compare Proposition \ref{p:3f-on}).

\begin{lem} \label{l:4}
Let $F\subset C(X)$ be a norm bounded family where $X$ is a Polish space
and $\cls_p(F)$ is the (necessarily compact) pointwise closure of $F$ in $\R^X$.
Consider the following conditions:
\bit
\item [(a)]
$\cls_p(F) \subset C(X)$.
\item [(b)]
$\cls_p(F)$ is a 
metrizable subspace in $\R^X$.
\item [(c)]
$F$ is a fragmented (equivalently, barely continuous) family of
functions on $X$.
\item [(d)]
$\cls_p(F) \subset \calB_1(X)$. \eit Then always  $(a)
\Longrightarrow (b) \Longleftrightarrow (c) \Longrightarrow (d)$.

\end{lem}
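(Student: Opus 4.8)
### Proof proposal for Lemma \ref{l:4}

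The plan is to verify the three implications separately, exploiting the hypothesis that $X$ is Polish (so in particular compact metrizable fragments of $\R^X$ are well-behaved, and Baire-class-1 is equivalent to pointwise-limit-of-continuous by Lemma \ref{l:Baire1}.2). Throughout, write $K:=\cls_p(F)$, which is pointwise compact in $\R^X$ since $F$ is norm bounded.

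\emph{Implication $(a)\Rightarrow(b)$.} Assume $K\subset C(X)$. Since $X$ is Polish, fix a countable dense subset $D\subset X$. The restriction map $\R^X\to\R^D$ is continuous and injective on $K$: indeed two functions in $K\subset C(X)$ agreeing on the dense set $D$ agree everywhere. A continuous injection from a compact space into the metrizable space $\R^D$ is a homeomorphism onto its image, so $K$ is metrizable. This gives $(b)$.

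\emph{Equivalence $(b)\Leftrightarrow(c)$.} For $(c)\Rightarrow(b)$: if $F$ is a fragmented family on $X$, then by Lemma \ref{l:fr-f-cls}.1 its pointwise closure $K$ is also a fragmented family; equivalently the evaluation map $\pi_\sharp\colon X\to\R^{K}$ is $(\tau,\mu_U)$-fragmented, where $\mu_U$ is the uniformity of uniform convergence (Definition \ref{d:fr-family}.1). Now $X$ is Polish, hence a Baire space, and $\mu_U$ is pseudometrizable (as $\mu$ on $\R$ is), so by Lemma \ref{simple-fr}.\ref{fr-sep-1} the map $\pi_\sharp$ is continuous on a dense $G_\delta$ subset $X_0$ of $X$; since $X$ is separable, choose a countable dense $D\subset X_0$. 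Then on $K$ the points of $D$ already separate: if $h_1,h_2\in K$ agree on $D$, then by density of $D$ and continuity of each $h\in K$ at points of $X_0\supset D$... — more cleanly, use that $D$ is dense in $X$ and each element of $F$ is continuous on all of $X$, hence each element of $K$ is continuous on $X_0$, and $D\subset X_0$ is dense in $X$; two such functions agreeing on $D$ agree on $X_0$, and by pointwise approximation agree on $X$. (This last point may need a small argument: members of $K$ need not be continuous on all of $X$, but they are limits along nets of elements of $F$, and one checks that if $h_1|_D=h_2|_D$ with $h_1,h_2$ both barely continuous — which holds by $(c)\Rightarrow$ fragmented $\Rightarrow$ barely continuous via Lemma \ref{Baire-1}.1 — then $h_1=h_2$, since a barely continuous function on $X$ is determined by its values on a dense set, the set where it fails continuity being meager.) Hence the restriction $\R^X\to\R^D$ is injective and continuous on the compact set $K$, so $K$ is metrizable, giving $(b)$. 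For $(b)\Rightarrow(c)$: if $K$ is compact metrizable, then $K$ is a compact metrizable subset of $\R^X$, so in particular $K\subset\calB_1(X)={\mathcal F}(X)$ by Fact \ref{BFT} combined with Lemma \ref{Baire-1}.2, i.e. every member of $K$ is a fragmented function. To upgrade ``each member is fragmented'' to ``$K$ is a fragmented \emph{family}'', identify $K$ with a compact metrizable space $M$ and consider the map $w\colon X\times M\to\R$, $w(x,h)=h(x)$, which is separately continuous (continuous in $h$ by construction, and in $x$ along each fixed $h\in K$? — no, not in general). Instead, apply Lemma \ref{l:FrFa}.1 with the roles $F\leftrightarrow M$, noting $X$ is Polish hence \v{C}ech-complete, $M$ metrizable: but this requires $w$ separately continuous, which again fails. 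The correct route: a compact metrizable $K\subset\R^X$ has a countable dense subset $\{h_n\}$, and $F\subset K$, so it suffices (since fragmentedness of a family is inherited from a dense subfamily by a $3\eps$-argument, exactly as in Lemma \ref{l:fr-f-cls}.1) to show $\{h_n\}$ is a fragmented family; but a \emph{countable} family of fragmented functions on a Polish (hence hereditarily Baire) space is automatically a fragmented family — this is the countable-case of Definition \ref{d:fr-family}, which follows because each $h_n\in\calB_1(X)$ is a pointwise limit of continuous functions and a countable union of meager sets is meager, so all $h_n$ are simultaneously continuous on a dense $G_\delta$; that gives the barely-continuous-family property, hence fragmented by Definition \ref{d:fr-family}.2. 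Thus $(b)\Rightarrow(c)$.

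\emph{Implication $(c)\Rightarrow(d)$.} If $F$ is fragmented then $K=\cls_p(F)$ is a fragmented family (Lemma \ref{l:fr-f-cls}.1), so in particular every $h\in K$ is a fragmented function $X\to\R$; since $X$ is Polish, Lemma \ref{Baire-1}.2 gives ${\mathcal F}(X)=\calB_1(X)$, hence $K\subset\calB_1(X)$, which is $(d)$.

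\emph{Main obstacle.} The delicate point is the direction $(b)\Rightarrow(c)$: passing from ``$K$ is compact metrizable, so each of its members is individually a Baire-1 (hence fragmented) function'' to ``$F$ is a fragmented \emph{family}''. The clean way around it is the observation that for a countable family of fragmented real-valued functions on a hereditarily Baire space, being a fragmented family is automatic (simultaneous bare continuity on a dense $G_\delta$, via countable additivity of meagerness), combined with the $3\eps$-argument showing fragmentedness of a family is inherited from any pointwise-dense subfamily. Assembling these carefully — rather than trying to force Namioka's theorem or Lemma \ref{l:FrFa} through a separate-continuity hypothesis that does not hold here — is the heart of the argument. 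The other two implications are routine once one uses Lemma \ref{l:Baire1}.2 and Lemma \ref{l:fr-f-cls}.1.
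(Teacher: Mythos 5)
Your implications $(a)\Rightarrow(b)$ and $(c)\Rightarrow(d)$ are correct and essentially the paper's own arguments. Both halves of $(b)\Leftrightarrow(c)$, however, contain genuine gaps. For $(c)\Rightarrow(b)$ you reduce matters to showing that restriction to a countable dense subset $D$ of the set $X_0$ of equicontinuity points is injective on $K=\cls_p(F)$, and to handle the points of $X\setminus X_0$ you invoke the claim that ``a barely continuous function on $X$ is determined by its values on a dense set, the set where it fails continuity being meager.'' That claim is false: the indicator function of a single point of $[0,1]$ is Baire~1 and agrees with the zero function on a dense (even comeager) set. Equicontinuity of $K$ at the points of $X_0$ gives no control off $X_0$, so two elements of $K$ may agree on $D$ yet differ. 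The countable subset of $X$ that actually works cannot be an arbitrary dense set: it must be chosen so that its image under $\pi_\sharp\colon X\to\R^F$ is dense for the \emph{uniform} metric in $\pi_\sharp(X)$; then agreement on it propagates to all of $X$ by a $3\eps$ estimate. Sup-separability of $\pi_\sharp(X)$ is precisely what fragmentedness of the family buys you (this is \cite[Lemma 6.5]{GM}), and the paper then quotes \cite[Theorem 4.1]{N} to conclude metrizability of $\cls_p(F)$.

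For $(b)\Rightarrow(c)$ your key assertion --- that a countable family of individually fragmented functions on a Polish space is automatically a fragmented family, because all of them are simultaneously continuous on a common dense $G_\delta$ --- is also false. Simultaneous individual continuity at a point is far weaker than equicontinuity there: the functions $h_n(x)=\sin(2^n\pi x)$ on $[0,1]$ are all continuous everywhere, yet for $A=[0,1]$ and $\eps=1/2$ no nonempty open set $O$ makes $h_n(O\cap A)$ $\eps$-small for all $n$ at once, so $\{h_n\}$ is not a fragmented family. Your argument never uses the metrizability of $K$, and without that hypothesis the conclusion is simply wrong (it would make every countable bounded subset of $C(X)$ a fragmented family). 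The route you dismissed is in fact the intended one: Lemma \ref{l:FrFa}.2 does not require separate continuity of the evaluation map $w\colon X\times K\to\R$; it only requires that $\tilde f\colon X\to\R$ be continuous for $f$ ranging over some \emph{dense} subset of the compact metrizable factor, and $F\subset C(X)$ is exactly such a dense subset of $K$. Applying Lemma \ref{l:FrFa}.2 to $w$ yields at once that $K$, hence $F$, is a barely continuous (so fragmented) family; the real strength here is the Namioka-type result \cite[Proposition 2.4]{GMU} behind that lemma, which is what your meager-set argument cannot replace.
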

\begin{proof} (a) $\Rightarrow$ (b):
Let $Y$ be a dense countable subset of $X$. Since every function
$\phi \in \cls_p(F)$ is continuous we get that the natural
continuous projection $\R^X \to \R^Y$ induces an injection on
$\cls_p(F)$. Since $\cls_p(F)$ is compact we get its homeomorphic
embedding into the second countable space $\R^Y$.

(b) $\Rightarrow$ (c):
Apply Lemma \ref{l:FrFa}.2 to the evaluation map $X \times \cls_p(F) \to \R$.
It follows that $\cls_p(F)$ and hence $F$ are fragmented families.


(c) $\Rightarrow$ (b): 
 The family $F$ is fragmented means by
Definition \ref{d:fr-family} that the natural map $X \to \R^F$ is
fragmented, where $ \R^F$ carries the uniformity of uniform
convergence. Then the image of $X$ is separable as it follows by
\cite[Lemma 6.5]{GM}. Now \cite[Theorem 4.1]{N} implies that the
pointwise closure $\cls_p(F)$ of $F$ in $\R^X$ is metrizable.

(c) $\Rightarrow$ (d): Since $F$ is a fragmented family its
pointwise closure $\cls_p(F)$ is again a fragmented family
(Lemma \ref{l:fr-f-cls}.1). In particular, every member $\phi \in
\cls_p(F)$ is a fragmented map on $X$. Since $X$ is Polish this
means
by Corollary \ref{c:fr=B1} that $\phi \in \calB_1(X)$.
\end{proof}


\begin{lem} \label{l:genQuot}
Let $q: X_1 \to X_2$ be a map between
topological spaces. Then
\ben
\item
The natural map
$\gamma: \R^{X_2} \to \R^{X_1}, \  \gamma(\phi)=\phi \circ q$
is pointwise continuous.
\item If $q: X_1 \to X_2$ is onto then $\gamma$ is injective.
\item
Let $q: X_1 \to X_2$ be a continuous onto map between compact spaces,
$F_2 \subset C(X_2)$ and $F_1 \subset C(X_1)$ be norm
bounded subsets such that $F_1=F_2 \circ q$. Then
\begin{itemize}
\item [(a)]
$F_1$ is a Rosenthal family for $X_1$ if and only if $F_2$ is a
Rosenthal family for $X_2$.
\item [(b)]
$\gamma$ induces a
homeomorphism between the compact spaces $\cls_p(F_2)$ and
$\cls_p(F_1)$.
\end{itemize}
\een
\end{lem}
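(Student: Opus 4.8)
The statement is a collection of elementary but useful facts about the pullback map $\gamma:\R^{X_2}\to\R^{X_1}$, $\gamma(\phi)=\phi\circ q$, and I would treat the three items in order, each building on the previous one. For (1), pointwise continuity of $\gamma$ is immediate from the definition of the product topology: a subbasic neighbourhood of $\gamma(\phi_0)$ in $\R^{X_1}$ is determined by finitely many points $x_1,\dots,x_n\in X_1$, and since $\gamma(\phi)(x_i)=\phi(q(x_i))$, the preimage is the subbasic neighbourhood of $\phi_0$ in $\R^{X_2}$ determined by the points $q(x_1),\dots,q(x_n)$. For (2), if $q$ is onto and $\phi\circ q=\psi\circ q$, then for every $x_2\in X_2$ we may pick $x_1\in q^{-1}(x_2)$ and get $\phi(x_2)=\phi(q(x_1))=\psi(q(x_1))=\psi(x_2)$, so $\phi=\psi$; hence $\gamma$ is injective.

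For (3), assume $q$ is continuous, $F_1=F_2\circ q$ with both norm bounded. For part (a) I would argue as follows. By Lemma \ref{l:fr-f-cls}.1 (stability of fragmented families under pointwise closure) together with Corollary \ref{c:fr=B1}, being a Rosenthal family for a Polish space is equivalent to $F$ itself being a fragmented family; more directly, one can just use the definition $\cls_p(F)\subset\F(X)$. In one direction, if $F_2$ is a Rosenthal family, then every element of $\cls_p(F_2)$ is fragmented, and by Lemma \ref{simple-fr}.2 (composition of a continuous map with a fragmented map is fragmented) every element of $\cls_p(F_2)\circ q$ is fragmented on $X_1$; since $\gamma$ is pointwise continuous by (1) it maps $\cls_p(F_2)$ onto a pointwise-compact set containing $F_1=F_2\circ q$, hence containing $\cls_p(F_1)$, so $\cls_p(F_1)\subset\F(X_1)$. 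Conversely, if $F_1$ is a Rosenthal family, I would use the sequential characterization in Fact \ref{t:Talagr} (equivalence of (1) and (3)): a sequence in $F_2$ corresponds to a sequence in $F_1=F_2\circ q$, and if $F_1$ contained no $l_1$-sequence then neither can $F_2$, because a linear isometric embedding of $\ell_1$ into $\overline{\mathrm{span}}(F_2)$ would pull back via $\gamma$ (which is linear and, being composition with $q$, norm-nonincreasing and injective by (2) when $q$ is onto — in general one restricts to $q(X_1)$) to an $\ell_1$-sequence in $\overline{\mathrm{span}}(F_1)$; alternatively, since $q$ is continuous, $\|\sum a_i f_i^{(2)}\|_{C(X_2)}\ge\sup_{x_1}|\sum a_i f_i^{(2)}(q(x_1))|=\|\sum a_i f_i^{(1)}\|_{C(X_1)}$ when $q$ is onto, giving a comparison of norms in both directions up to the closure $\overline{q(X_1)}$. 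So the two conditions are equivalent. For part (b), when $q$ is additionally onto, $\gamma$ is a pointwise-continuous injection by (1) and (2), it carries the compact set $\cls_p(F_2)$ onto a compact (hence closed) subset of $\R^{X_1}$ containing $F_1$ and contained in $\cls_p(F_1)$; conversely, any $\psi\in\cls_p(F_1)$ is a pointwise limit of a net from $F_2\circ q$, and using surjectivity of $q$ the corresponding net in $F_2$ subconverges (after passing to a subnet, by compactness of $\cls_p(F_2)$) to some $\phi$ with $\gamma(\phi)=\psi$; thus $\gamma(\cls_p(F_2))=\cls_p(F_1)$, and a continuous bijection between compact Hausdorff spaces is a homeomorphism.

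The only mildly delicate point is the norm comparison in part (a): when $q$ is merely continuous (not onto), $F_1=F_2\circ q$ depends on $F_2$ only through its restriction to $\overline{q(X_1)}$, so the precise statement is that $F_1$ Rosenthal $\iff$ $F_2|_{\overline{q(X_1)}}$ Rosenthal, but since being a Rosenthal family is inherited by restrictions to closed subspaces (again via Fact \ref{t:Talagr}, or directly since fragmentability passes to subspaces), this is the same as $F_2$ being Rosenthal; the cleanest route is to reduce to the case $q$ onto by replacing $X_2$ with $\overline{q(X_1)}$, and I expect this reduction to be the main bookkeeping obstacle rather than a genuine mathematical difficulty. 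Everything else is a direct unwinding of definitions plus citations of Lemmas \ref{simple-fr}, \ref{l:fr-f-cls} and Fact \ref{t:Talagr}.
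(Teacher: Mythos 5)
Your items (1), (2), (3)(b) and the forward half of (3)(a) follow essentially the paper's own route: the key identity $\gamma(\cls_p(F_2))=\cls_p(\gamma(F_2))=\cls_p(F_1)$, obtained from compactness of $\cls_p(F_2)$ (boundedness of $F_2$) plus the continuity in (1), is exactly what the paper uses, after which $\cls_p(F_1)\subset{\mathcal F}(X_1)$ follows from Lemma \ref{simple-fr}.2 and the homeomorphism in (b) from ``continuous bijection of compacta''. For the converse half of (3)(a) you diverge: you transfer the absence of an $l_1$-sequence from $F_1$ back to $F_2$ via Fact \ref{t:Talagr} and the norm identity $\|\sum a_if_i^{(2)}\|_{C(X_2)}=\|\sum a_if_i^{(1)}\|_{C(X_1)}$ for onto $q$, whereas the paper stays entirely with fragmentability and applies Lemma \ref{simple-fr}.5 elementwise to each $\phi\in\cls_p(F_2)$, using once more that $\phi\circ q\in\cls_p(F_1)$. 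Both work; the paper's version is more uniform (the same two-line argument handles both directions and never leaves the definition of a Rosenthal family), while yours needs Fact \ref{t:Talagr}, which is stated only for compact $X$, but makes the Banach-space content more visible.

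One concrete correction to your closing remark. You are right that the converse direction genuinely requires $q$ to be onto (or at least to have dense image) --- the paper's own proof tacitly assumes this, since Lemma \ref{simple-fr}.5 is stated for continuous \emph{onto} maps of compact spaces --- but your proposed repair for general continuous $q$ uses the inheritance of the Rosenthal property in the wrong direction. Restriction of a Rosenthal family to a closed subspace is again Rosenthal, but a family whose restriction to the proper closed subspace $\overline{q(X_1)}$ is Rosenthal need not be Rosenthal on all of $X_2$: take $q$ constant, so that $F_1$ consists of constant functions and is automatically Rosenthal, while $F_2$ may be, say, the family of projections on $\{0,1\}^{\N}$, which is not. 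So the non-onto case of the ``only if'' in (3)(a) is simply false rather than a bookkeeping reduction; this does no harm downstream, since in every application in the paper $q$ is a quotient onto its image, but you should state the ontoness hypothesis rather than claim to dispense with it.
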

\begin{proof}  Claims (1) and (2) are trivial.

(3)(a): By the continuity of $\gamma$ we get $\gamma(F_2) \subset
\gamma(\cls_p(F_2)) \subset \cls_p(\gamma(F_2))$. Since $F_2$ is
bounded the set $\cls_p(F_2)$ is compact in $\R^{X_2}$. Then
$\gamma(\cls_p(F_2))=\cls_p(\gamma(F_2))$. On the other hand,
$\gamma(F_2) = F_2 \circ q = F_1$. Therefore, $\cls_p(F_2) \circ q
= \cls_p(F_1)$. Now apply Lemma \ref{simple-fr}.5.

(3)(b): Combine the assertions (1) and (2) taking into account
that $\gamma(\cls_p(F_2))=\cls_p(\gamma(F_2))=\cls_p(F_1)$.
\end{proof}


\begin{prop} \label{p:aft} Let $X$ be a compact space and
$F$ a bounded
subset of $C(X)$.
The following conditions are equivalent:
\begin{enumerate}
\item
$F$ is a Rosenthal family for $X$.
\item
Every sequence in $F$ has a subsequence which is a fragmented
family on $X$ (i.e. $F$ is an eventually fragmented family of maps on
$X$).
\end{enumerate}

\sk

If $X$ is metrizable then each of these conditions is
equivalent to the following:

\sk

\emph{(3)}  The pointwise closure $\cls_p(F)$ of $F$ in $\R^X$ is
a subset of $\calB_1(X)$.

\end{prop}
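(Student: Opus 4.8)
The plan is to prove Proposition \ref{p:aft} by reducing everything to the results already assembled, chiefly Fact \ref{t:Talagr} and Lemma \ref{l:4}.

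\textbf{Equivalence of (1) and (2) for general compact $X$.} First I would show $(2)\Rightarrow(1)$. Assume every sequence in $F$ has a subsequence which is a fragmented family on $X$; we must see that $\cls_p(F)\subset \mathcal F(X)$. By Fact \ref{t:Talagr} (the equivalence of its conditions (3) and (5)) it suffices to show that every countable subfamily $S\subset F$ is a Rosenthal family, i.e. $\cls_p(S)\subset \mathcal F(X)$. Enumerate $S$ as a sequence; by hypothesis it has a subsequence $S'$ which is a fragmented family on $X$. By Lemma \ref{l:fr-f-cls}.1 the pointwise closure $\cls_p(S')$ is again a fragmented family, so in particular $\cls_p(S')\subset \mathcal F(X)$, i.e. $S'$ is a Rosenthal family. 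But then $S'$ is a sequence in $C(X)$ with $\cls_p(S')\subset \mathcal F(X) = B_r'(X)$; applying Fact \ref{t:Talagr} to the bounded set $S'$ (its condition (3) holds) we get that $S'$ is sequentially precompact in $\R^X$, hence it has a pointwise convergent subsequence, and more to the point $S'$ has no $l_1$-subsequence. Since this holds for a subsequence of every sequence in $F$, a standard argument shows $F$ itself has no $l_1$-subsequence: an $l_1$-sequence in $F$ would be an $l_1$-sequence all of whose subsequences are $l_1$-sequences, contradicting that some subsequence is fragmented and hence $l_1$-free. Then Fact \ref{t:Talagr} ((1)$\Rightarrow$(3)) gives $\cls_p(F)\subset \mathcal F(X)$, which is (1).

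\textbf{The converse $(1)\Rightarrow(2)$.} Assume $F$ is a Rosenthal family. Take any sequence $(f_n)$ in $F$. By Fact \ref{t:Talagr} ((3)$\Rightarrow$(2)) the bounded family $F$ is sequentially precompact in $\R^X$, so $(f_n)$ has a pointwise convergent subsequence $(f_{n_k})$; let $S=\{f_{n_k}\}$. Then $S$ is a countable subfamily of $F$, so by Fact \ref{t:Talagr}.(5) $S$ is itself a Rosenthal family, meaning $\cls_p(S)\subset \mathcal F(X)$. The set $\cls_p(S)$ is compact in $\R^X$ and, since $X$ is compact, we want to conclude that $S$ is in fact a \emph{fragmented family}, not merely a family whose pointwise closure consists of individually fragmented maps. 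This is the delicate point. The natural route is: $\cls_p(S)$ is a pointwise compact, separable (being the closure of a countable set) subspace of $\mathcal F(X)$; invoke that a pointwise compact set of fragmented functions on a compact space is metrizable — this is exactly the content needed, and it follows by combining separability of $\cls_p(S)$ with the Namioka-type argument behind Lemma \ref{l:4}, specifically that for $X$ compact a bounded pointwise-compact subset of $\mathcal F(X)$ which is separable in $\R^X$ is metrizable (cf. \cite[Theorem 4.1]{N}). Once $\cls_p(S)$ is metrizable, the evaluation map $X\times \cls_p(S)\to \R$ is separately continuous with metrizable image-space; applying Lemma \ref{l:FrFa}.1 (with $F:=\cls_p(S)$ compact, $X$ compact hence \v{C}ech-complete, $M=\R$) yields that the associated family $\cls_p(S)\supset S$ is fragmented. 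Hence the subsequence $S$ of $(f_n)$ is a fragmented family on $X$, which is (2).

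\textbf{The metrizable case, equivalence with (3).} Now suppose additionally $X$ is metrizable; I would show $(1)\Leftrightarrow(3)$. Since $X$ is compact metrizable it is Polish, so Lemma \ref{l:4} applies directly to $F$: it gives $(c)\Leftrightarrow(d)$, i.e. ``$F$ is a fragmented family on $X$'' $\Leftrightarrow$ ``$\cls_p(F)\subset \calB_1(X)$'', and also $(d)$ is implied by the others. For $(1)\Rightarrow(3)$: if $F$ is a Rosenthal family, then each member of $\cls_p(F)$ is a fragmented map on the Polish space $X$, and by Corollary \ref{c:fr=B1}.2 $\mathcal F(X)=\calB_1(X)$, so $\cls_p(F)\subset \calB_1(X)$, which is (3). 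For $(3)\Rightarrow(1)$: again by Corollary \ref{c:fr=B1}.2, $\calB_1(X)=\mathcal F(X)$, so $\cls_p(F)\subset \calB_1(X)$ means precisely $\cls_p(F)\subset \mathcal F(X)$, i.e. $F$ is a Rosenthal family.

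\textbf{Main obstacle.} The routine parts are the metrizable case and the $l_1$-free bookkeeping; the one genuinely substantive step is passing, in $(1)\Rightarrow(2)$, from ``$\cls_p(S)$ consists of fragmented maps'' to ``$\cls_p(S)$ is a fragmented \emph{family}'' — equivalently, to metrizability of $\cls_p(S)$. That is where compactness of $X$ and separability of $\cls_p(S)$ must be combined with the Namioka-style metrization result underlying Lemma \ref{l:4}; I would lean on \cite[Theorem 4.1]{N} and Lemma \ref{l:FrFa}.1 to carry it out, exactly as in the proof of (c)$\Rightarrow$(b) in Lemma \ref{l:4}.
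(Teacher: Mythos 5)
Your direction $(2)\Rightarrow(1)$ is correct (the route through the $l_1$-free characterization, Fact \ref{t:Talagr}(1), is a legitimate alternative to the paper's argument, which instead verifies sequential precompactness via a metrizable quotient), and the metrizable equivalence $(1)\Leftrightarrow(3)$ is exactly the paper's. The genuine gap is in $(1)\Rightarrow(2)$, at the step you yourself flag as delicate. Lemma \ref{l:FrFa}.1 requires the map $w\colon X\times \cls_p(S)\to\R$ to be \emph{separately} continuous, i.e.\ every element of $\cls_p(S)$ must be continuous as a function on $X$; but the pointwise limit $\phi$ of your convergent subsequence is in general \emph{not} continuous --- if it always were, every Rosenthal family would be relatively pointwise compact in $C(X)$ and we would be in the WAP situation. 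So Lemma \ref{l:FrFa}.1 simply does not apply. The correct tool is Lemma \ref{l:FrFa}.2, which only asks for continuity of $\tilde y\colon X\to M$ on a \emph{dense} subset of the compact parameter space (here the dense set $S$ of continuous functions inside $K=S\cup\{\phi\}$); but that lemma requires $X$ to be Polish, whereas your $X$ is merely compact. This is why the paper first passes to the compact metrizable quotient $X'=q(X)\subset\R^S$ induced by the countable family $S$, identifies $\cls_p(S)$ with $\cls_p(S')$ via Lemma \ref{l:genQuot}.3(b), applies Lemma \ref{l:FrFa}.2 to the evaluation map $X'\times K'\to\R$, and only then transfers fragmentedness of the family back to $X$ by Lemma \ref{l:fr-f-cls}.2. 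That quotient step is not cosmetic and is missing from your argument.

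A secondary problem: the metrization principle you invoke --- that a bounded, pointwise compact, separable subset of ${\mathcal F}(X)$ is metrizable for compact $X$ --- is false. The Helly space of increasing self-maps of $[0,1]$ is a separable, pointwise compact, non-metrizable set of Baire~1 (hence fragmented) functions on $[0,1]$. The implication (c)$\Rightarrow$(b) of Lemma \ref{l:4} uses the much stronger hypothesis that the set is a fragmented \emph{family}, which yields separability of the image of $X$ in $\R^F$ for the \emph{uniform} topology; that, not pointwise separability of $\cls_p(F)$, is the actual input to Namioka's theorem. Fortunately the metrizability you actually need is available for free and requires no such principle: since $S$ is a pointwise convergent sequence, $\cls_p(S)=S\cup\{\phi\}$ is a countable compact Hausdorff space, hence metrizable. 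With that observation, plus the quotient detour described above, your outline can be completed.
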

\begin{proof}
(1) $\Rightarrow$ (2): Let $S$ be a sequence in $F$. By the
implication (3) $\Rightarrow$ (2) from Fact \ref{t:Talagr} we can
choose a pointwise convergent subsequence $\{f_n\}_{n \in \N}$ of $S$
with $\phi=\lim f_n$. Denote by $K$ the compact metrizable
subset $\{f_n\}_{n \in \N} \cup \{\phi\}$ in $\R^X$.
Define the pointwise continuous map
$$q: X \to \R^S, \ q(x)(f)=f(x) \ \ \ \forall f \in S$$
and denote by $X'$ the subspace $q(X) \subset \R^S$.
Clearly, $X'$ is pointwise compact because $S$ is norm bounded.
Furthermore, $X'$ is metrizable since $S$ is countable.
 For every $f \in S$ we have the uniquely defined
continuous map $f': X' \to \R$, $f'(q(x))=f(x)$ such that $f=f' \circ q=\gamma(f')$.
By Lemma \ref{l:genQuot}.3(b), $\gamma: \R^{X} \to \R^{X'}$ induces a
homeomorphism between the compact spaces
$\cls_p(A') \to \cls_p(A)$, where $A:=\{f_n\}_{n \in \N}$ and $A':=\{f'_n\}_{n \in \N}$. Therefore,
there exists $\psi \in X'$ with $\gamma(\psi)=\phi$ such that $\gamma$ induces
a homeomorphism $K' \to K$, where $K':=\{f'_n\}_{n \in \N} \cup \{\psi\}$.
Consider the evaluation map $X' \times K' \to \R$. Then we can apply Lemma
\ref{l:FrFa}.2 which implies that $K'$, and hence also its
subfamily $A'$, are fragmented families of maps on $X'$.
Now Lemma \ref{l:fr-f-cls} .2 implies that $A$ is a fragmented family of maps on $X$.

(2) $\Rightarrow$ (1): We have to show that $F$ is a Rosenthal family for $X$.
By Fact \ref{t:Talagr}
it is equivalent to check that every sequence $S$ in $F$ has a pointwise convergent subsequence in
$\R^X$.
By our assumption there exists a subsequence of $S$ which
is a fragmented family of functions on $X$. So without restriction of generality
we may assume that $S$ itself is a fragmented family.
 As in the proof above consider the quotient $q: X \to X' \subset \R^S$.
Then the family $S':=\{f': X' \to \R\}_{f \in S}$ is a fragmented family by Lemma \ref{l:fr-f-cls}.2.
Now by Lemma \ref{l:4} the
pointwise closure $\cls_p(S')$ is a (compact)
metrizable subspace in $\R^{X'}$. Therefore there exists a
convergent subsequence of $S'$
(in $\cls_p(S') \subset \R^{X'}$).
By Lemma \ref{l:genQuot}.3(b), $\gamma: \R^{X} \to \R^{X'}$ induces a
homeomorphism between the compact spaces $\cls_p(S)$ and
$\cls_p(S')$. Hence there exists a convergent subsequence of $S$ (in $\R^X$), as desired.

\sk (1) $\Leftrightarrow$ (3) (For metrizable $X$): Since $X$ is
compact metrizable we have ${\mathcal F}(X)=\calB_1(X)$ by Corollary
\ref{c:fr=B1}.2.
\end{proof}

Let $X=\{0,1\}^{\N}$ be the Cantor cube and $F:=\{p_n\}_{n \in\N}$
the family of all projection mappings (with $p_n(x)=x(n)$). It is
well known that the pointwise closure of $F$ in $\R^X$ is
homeomorphic to $\beta \N$ (see for instance \cite[p. 4]{To-b}).
By Fact \ref{BFT} it follows that $F$ is not a Rosenthal family
for $X=\{0,1\}^{\N}$.

\sk
\subsection{Banach spaces not containing $l_1$}
\label{subsec-l1}

\begin{defn}\label{Ros-Banach}
Let us say that a Banach space $V$ is {\em Rosenthal} if it does
not contain an isomorphic copy of $l_1$.
\end{defn}

Clearly the class of Asplund spaces (see Remarks \ref{r:fr}.4) is
a subclass of the class of Rosenthal spaces.  The difference
between these two classes can be illustrated in terms of
fragmentability. Compare the last two items of Remarks \ref{r:fr}
and Proposition \ref{p:crit}.

Recall the following famous result of Rosenthal.

\begin{fact} \label{t:Ros0} \emph{(Rosenthal \cite{Ros0})}
Let $V$ be a Banach space. The following conditions
are equivalent:
\begin{enumerate}
\item
$V$ is a Rosenthal Banach space.
\item
Every bounded sequence in $V$ has a weak-Cauchy subsequence.
\end{enumerate}
\end{fact}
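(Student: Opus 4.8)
The statement to be proved is Rosenthal's dichotomy itself (Fact \ref{t:Ros0}): a Banach space $V$ contains no isomorphic copy of $l_1$ if and only if every bounded sequence in $V$ has a weak-Cauchy subsequence. The nontrivial direction is (1) $\Rightarrow$ (2); the converse (2) $\Rightarrow$ (1) is routine, since the unit vector basis of $l_1$ admits no weak-Cauchy subsequence (a weak-Cauchy sequence is weakly bounded, hence norm bounded by uniform boundedness, but for any subsequence $(e_{n_k})$ the functional alternating $\pm 1$ on consecutive terms witnesses that $(e_{n_k})$ is not weak-Cauchy), so a copy of $l_1$ inside $V$ would immediately contradict (2). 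So the plan is to concentrate entirely on (1) $\Rightarrow$ (2), equivalently on its contrapositive: if some bounded sequence $(x_n)$ in $V$ has no weak-Cauchy subsequence, then $(x_n)$ has a subsequence equivalent to the $l_1$-basis, which then spans an isomorphic copy of $l_1$.

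The key combinatorial device is a Ramsey-type argument on \emph{Boolean independence} of the sequence, carried out on a countable norming set of functionals. Concretely, after normalizing so that $\|x_n\|\le 1$, I would choose a countable subset $D$ of the dual unit ball that norms $\overline{\operatorname{span}}\{x_n\}$ (possible by separability of that subspace), and regard the $x_n$ as uniformly bounded real-valued functions on the compact space $K := \cls_p(D)$ (or just on $D$ with its topology). The failure of any weak-Cauchy subsequence says precisely that $(x_n)$, viewed this way, has no pointwise-convergent subsequence. By the contrapositive of the equivalence (2) $\Leftrightarrow$ (1) in Fact \ref{t:Talagr} — applied to the bounded family $\{x_n\}\subset C(K)$ — this forces $\{x_n\}$ to contain a subsequence equivalent to the $l_1$-basis \emph{in the sup norm on $C(K)$}. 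The final step is to transfer this: because $D$ norms the span of the $x_n$, the map $x\mapsto (\langle d, x\rangle)_{d\in D}$ is a (linear) isomorphic embedding of $\overline{\operatorname{span}}\{x_n\}$ into a space of bounded functions, so an $l_1$-estimate $\|\sum a_k x_{n_k}\|_{C(K)} \ge c\sum|a_k|$ pulls back to $\|\sum a_k x_{n_k}\|_V \ge c\sum|a_k|$; the reverse inequality is the triangle inequality. Hence $(x_{n_k})$ is an $l_1$-sequence in $V$, contradicting (1).

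The main obstacle — and the mathematical heart of the matter — is the step inside $C(K)$: producing the $l_1$-subsequence from a bounded sequence of continuous functions with no pointwise-convergent subsequence. I would reduce this to the standard \emph{Rosenthal $l_1$-lemma for sets}: a uniformly bounded sequence of real functions with no pointwise-convergent subsequence contains a subsequence $(f_{n_k})$ and a pair of reals $r < s$ such that the family of pairs $\big(\{x: f_{n_k}(x)<r\},\{x: f_{n_k}(x)>s\}\big)$ is Boolean-independent; one then checks that such a Boolean-independent sequence of functions satisfies $\|\sum_{k} a_k f_{n_k}\|_\infty \ge \tfrac{s-r}{2}\sum_k |a_k|$ by, for each sign pattern of the $a_k$, choosing a point in the appropriate intersection. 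Extracting the Boolean-independent subsequence is where Ramsey's theorem (in the form used by Rosenthal) enters and is the genuinely delicate combinatorial core; the rest of the argument is bookkeeping. Since Fact \ref{t:Talagr} has already been cited and its equivalences are exactly what packages this reduction, the cleanest exposition simply invokes Fact \ref{t:Talagr}((2) $\Rightarrow$ (1)) for the family $\{x_n\}\subset C(K)$ and then performs the two transfer steps described above, keeping the Ramsey argument hidden inside that cited fact.
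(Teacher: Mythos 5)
The paper does not prove this statement: Fact \ref{t:Ros0} is quoted from Rosenthal's paper \cite{Ros0} and used as a black box, and the nearest thing to an argument in the text is Lemma \ref{l:crit}, which merely translates it into the language of Rosenthal families by invoking Fact \ref{t:Talagr}. Your headline reduction is exactly that translation run in reverse, and as a proof it is circular in substance: the equivalences $(1)\Leftrightarrow(2)\Leftrightarrow(4)$ of Fact \ref{t:Talagr} are Talagrand's generalizations of Rosenthal's theorem and rest on it (or on its proof), so ``keeping the Ramsey argument hidden inside that cited fact'' hides precisely the theorem you are asked to prove. Within the paper's citation conventions the deduction is formally admissible, but it is not an independent proof of $(1)\Rightarrow(2)$.

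That said, the genuine argument you sketch at the end --- extract, by the Ramsey-type argument, a subsequence and reals $r<s$ for which the pairs $\bigl(\{f_{n_k}<r\},\{f_{n_k}>s\}\bigr)$ form an independent family, then read off $\|\sum_k a_kf_{n_k}\|_\infty\ge\frac{s-r}{2}\sum_k|a_k|$ by evaluating at two points chosen in opposite intersections --- is the standard and correct one, and the easy direction $(2)\Rightarrow(1)$ is handled correctly. Two points of hygiene. First, the detour through a countable norming set $D$ and the compactum $K=\cls_p(D)$ is unnecessary: Rosenthal's combinatorial lemma works for uniformly bounded functions on an arbitrary set, so one may take $S=B_{V^*}$ outright, where ``pointwise convergent on $S$'' literally is ``weak-Cauchy'' and the supremum over $S$ literally is the norm of $V$; no transfer steps are then needed. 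Second, if you do insist on $K=\cls_p(D)$, the claim that failure of weak-Cauchy ``says precisely'' failure of pointwise convergence on $K$ is not a tautology: pointwise convergence on a norming set does \emph{not} imply weak-Cauchy (the unit basis of $l_1$ converges pointwise on the eventually constant sign patterns, which norm $l_1$), and the implication ``bounded and pointwise convergent on the compact $K$ $\Rightarrow$ weak-Cauchy in $C(K)$ $\Rightarrow$ weak-Cauchy in $V$'' requires Lebesgue's dominated convergence theorem on $C(K)^*$ followed by a Hahn--Banach extension, in the spirit of Proposition \ref{p:from X to B*}.
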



Every Banach space $V$ can be treated as a natural subspace of the Banach space
$C(B_{V^*})$, where $B_{V^*}$ is the weak$^*$
compact unit ball of $V^*$. Furthermore,
the corresponding weak topology on $V$
coincides with the weak topology inherited from $C(B_{V^*})$.
Therefore
taking into account Fact \ref{t:Talagr} and Definition \ref{d:Ros-F}
we get the following reformulation of Fact \ref{t:Ros0}.

\begin{lem} \label{l:crit}
Let $V$ be a Banach space. The following conditions
are equivalent:
\begin{enumerate}
\item
$V$ is a Rosenthal Banach space.
\item
The unit ball $B_V$ of $V$ is a Rosenthal family for the weak$^*$
compact unit ball $B_{V^*}$ of $V^*$.
\end{enumerate}
\end{lem}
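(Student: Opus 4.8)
The plan is to derive Lemma \ref{l:crit} directly from Rosenthal's theorem (Fact \ref{t:Ros0}) together with Talagrand's characterization of Rosenthal families (Fact \ref{t:Talagr}), using the standard canonical embedding $V \hookrightarrow C(B_{V^*})$. First I would recall, as the paragraph preceding the lemma already asserts, that the natural map $\iota \colon V \to C(B_{V^*})$, sending $v$ to the evaluation function $\varphi \mapsto \varphi(v)$, is a linear isometric embedding, and that the weak topology of $V$ coincides with the topology induced on $\iota(V)$ by the weak topology of $C(B_{V^*})$. (This is because $V^* = (C(B_{V^*}))^*\!\restriction_{\iota(V)}$ in the relevant sense: every $\varphi \in V^*$ is a point evaluation on $\iota(V)$, and these separate points and norm the space.) In particular a bounded sequence $(v_n)$ in $V$ has a weak-Cauchy subsequence if and only if the corresponding sequence $(\iota(v_n))$ in $C(B_{V^*})$ has one.

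Next I would chase the equivalences. Assume (1), i.e. $V$ is Rosenthal. By Fact \ref{t:Ros0} every bounded sequence in $V$ has a weak-Cauchy subsequence; transporting via $\iota$, every sequence in the bounded set $B_V \subset C(B_{V^*})$ has a weak-Cauchy subsequence in $C(B_{V^*})$. By the implication $(4)\Rightarrow(3)$ of Fact \ref{t:Talagr} (applied with $X = B_{V^*}$, which is compact, and $F = B_V$), it follows that $\cls_p(B_V) \subset \mathcal{F}(B_{V^*})$, i.e. $B_V$ is a Rosenthal family for $B_{V^*}$ in the sense of Definition \ref{d:Ros-F}. This gives (2). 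Conversely, assume (2): $B_V$ is a Rosenthal family for $B_{V^*}$. By the implication $(3)\Rightarrow(4)$ of Fact \ref{t:Talagr}, every sequence in $B_V$ has a weak-Cauchy subsequence in $C(B_{V^*})$; since the weak topology on $\iota(V)$ is the restriction of the weak topology of $C(B_{V^*})$, every bounded sequence in $V$ has a weak-Cauchy subsequence, and hence $V$ is Rosenthal by Fact \ref{t:Ros0}. This gives (1), completing the cycle.

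The only genuinely non-formal point—and thus the step I would be most careful about—is the claim that weak-Cauchyness is preserved under $\iota$ in \emph{both} directions, equivalently that $w$-$\lim$ of elements of $\iota(V)$ tested against all of $(C(B_{V^*}))^*$ matches testing against all of $V^*$. One direction ($\iota$ weak-to-weak continuous) is automatic since $\iota$ is a bounded linear map. For the other direction one uses that the functionals on $\iota(V)$ coming from $V^*$ already suffice: a sequence in $V$ that is weak-Cauchy in $V$ is, after applying $\iota$, a sequence on which every $\varphi \in V^*$ converges, and since $V^*$ separates points of $V$ and is norming, a standard uniform boundedness argument upgrades this to weak-Cauchyness in $C(B_{V^*})$ (the limit functional on $\iota(V)$ is bounded, hence extends by Hahn–Banach). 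Since this is exactly the content of the sentence ``the corresponding weak topology on $V$ coincides with the weak topology inherited from $C(B_{V^*})$'' that the paper states just before the lemma, in the write-up I would simply cite that remark rather than reprove it, and the proof of Lemma \ref{l:crit} reduces to the two-line application of Facts \ref{t:Ros0} and \ref{t:Talagr} described above.
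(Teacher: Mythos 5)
Your proof is correct and follows essentially the same route as the paper: the paper also deduces Lemma \ref{l:crit} by viewing $V$ as a subspace of $C(B_{V^*})$, noting that the weak topology on $V$ agrees with the one inherited from $C(B_{V^*})$, and then combining Fact \ref{t:Ros0} with the equivalence of conditions (3) and (4) in Fact \ref{t:Talagr}. Your extra care about weak-Cauchyness transferring in both directions is exactly the content the paper compresses into its remark that the two weak topologies coincide.
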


For the separable case, we have the following theorem.

\begin{fact} \label{t:Ros}
\emph{(\cite{OR}, \cite[p. 374]{Ros1}  and \cite[Theorem
3]{Ros2})} Let $V$ be a separable Banach space.
The following conditions are equivalent:
\begin{enumerate}
\item $V$ is a Rosenthal Banach space.
\item
${\card}(V^{**})={\card}(V)$.
\item
Every $v^{**} \in V^{**}$ is a Baire 1 function when
restricted to $B_{V^*}$.
\end{enumerate}
\end{fact}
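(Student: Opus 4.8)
The plan is to establish $(1)\Leftrightarrow(3)$ directly from the machinery already assembled, and then to read off the cardinality clause $(2)$ on each side. We may assume $V$ is infinite dimensional, the finite dimensional case being trivial ($V=V^{**}$), so that ${\card}(V)=2^{\aleph_0}$; and since $V$ is separable the weak$^{*}$ compact unit ball $B^{*}:=B_{V^{*}}$ is metrizable, hence Polish. Recall (as noted just before Lemma~\ref{l:crit}) that $V$ sits isometrically in $C(B^{*})$ with the induced weak topology, and that the elements of $V^{**}$ are linear functions on $V^{*}$. Let $r\colon V^{**}\to\R^{B^{*}}$ be the restriction map $r(v^{**})=v^{**}|_{B^{*}}$; its restriction to $V$ is the embedding just recalled, and two elementary facts about it will be used repeatedly: $r$ is continuous from the weak$^{*}$ topology of $V^{**}$ to the topology of pointwise convergence on $B^{*}$, and $r$ is \emph{injective} (a linear functional on $V^{*}$ is determined by its values on $B^{*}$).

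The first real step is the identification $\cls_p(B_V)=r(B_{V^{**}})$ inside $\R^{B^{*}}$, i.e.\ the pointwise closure of $B_V$ is precisely the set of restrictions to $B^{*}$ of the elements of $B_{V^{**}}$. The inclusion ``$\subseteq$'' holds because $r(B_{V^{**}})$ is a continuous image of the weak$^{*}$ compact ball $B_{V^{**}}$, hence pointwise compact, and it contains $r(B_V)=B_V$, so it contains the pointwise closure of $B_V$. The inclusion ``$\supseteq$'' is Goldstine's theorem ($B_{V^{**}}$ is the weak$^{*}$ closure of $B_V$ in $V^{**}$) pushed through the continuous map $r$. Granting this, $(1)\Leftrightarrow(3)$ is a short chain of rewrites: by Lemma~\ref{l:crit}, $V$ is Rosenthal iff the bounded set $B_V\subset C(B^{*})$ is a Rosenthal family for $B^{*}$, i.e.\ (Definition~\ref{d:Ros-F}) iff $\cls_p(B_V)\subset\mathcal F(B^{*})$; by the identification this reads $r(B_{V^{**}})\subset\mathcal F(B^{*})$; since $B^{*}$ is Polish, Corollary~\ref{c:fr=B1}.2 lets us replace $\mathcal F(B^{*})$ by $\calB_1(B^{*})$; and since Baire class $1$ is preserved under multiplication by a nonzero scalar, demanding $v^{**}|_{B^{*}}\in\calB_1(B^{*})$ for every $v^{**}\in B_{V^{**}}$ is the same as demanding it for every $v^{**}\in V^{**}$, which is exactly $(3)$.

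For the cardinality clause I would run $(3)\Rightarrow(2)\Rightarrow(1)$. Assuming $(3)$, the injection $r$ carries $V^{**}$ into $\calB_1(B^{*})$. Since $B^{*}$ is compact metrizable, ${\card}(C(B^{*}))\le 2^{\aleph_0}$ (a continuous function is determined by its values on a countable dense set), and by Lemma~\ref{l:Baire1}.2 each Baire class $1$ function on $B^{*}$ is a pointwise limit of some sequence in $C(B^{*})$; hence $\calB_1(B^{*})$ is a surjective image of a subset of $C(B^{*})^{\N}$, so ${\card}(\calB_1(B^{*}))\le(2^{\aleph_0})^{\aleph_0}=2^{\aleph_0}$. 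Thus ${\card}(V^{**})\le 2^{\aleph_0}={\card}(V)$, and with the canonical inclusion $V\hookrightarrow V^{**}$ this gives $(2)$. For $(2)\Rightarrow(1)$ I would argue contrapositively: if $V$ is not Rosenthal, then by Lemma~\ref{l:crit} the family $B_V$ is not a Rosenthal family for $B^{*}$, so some countable $S=\{f_n\}\subseteq B_V$ is not one either (Fact~\ref{t:Talagr}.5), i.e.\ $\cls_p(S)\not\subset\mathcal F(B^{*})=\calB_1(B^{*})$ (Corollary~\ref{c:fr=B1}.2); viewing $S$ as a (uniformly, hence pointwise, bounded) sequence of bounded functions on the Polish space $B^{*}$ and invoking the BFT dichotomy (Fact~\ref{BFT}.2), $\cls_p(S)$ contains a homeomorphic copy of $\beta\N$. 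Choosing $v_n\in B_V$ with $v_n|_{B^{*}}=f_n$ and letting $W$ be the weak$^{*}$ closure of $\{v_n\}$ in $V^{**}$, the compactness half of Step~1 gives $r(W)=\cls_p(S)$; since $r$ is injective, ${\card}(V^{**})\ge{\card}(W)={\card}(\cls_p(S))\ge{\card}(\beta\N)=2^{2^{\aleph_0}}>2^{\aleph_0}={\card}(V)$, so $(2)$ fails.

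The hard part — indeed the only step where care is genuinely needed — is the identification $\cls_p(B_V)=r(B_{V^{**}})$ of Step~1: one must be precise that the weak$^{*}$ topology of $V^{**}$, restricted to the coordinate functionals indexed by $B^{*}$, is exactly the topology of pointwise convergence on $B^{*}$, so that Goldstine's density theorem transfers verbatim to a statement about pointwise closures in $\R^{B^{*}}$, and that $r$ is injective, so that cardinalities computed in $\R^{B^{*}}$ faithfully reflect those of $V^{**}$. Everything else is bookkeeping the paper has already prepared: the dictionary among ``Rosenthal family'', ``fragmented'', ``barely continuous'' and ``Baire class $1$'' on the Polish space $B^{*}$ is furnished by Lemma~\ref{l:crit}, Corollary~\ref{c:fr=B1}, Lemma~\ref{Baire-1} and Lemma~\ref{l:Baire1}, and the two cardinality estimates are routine.
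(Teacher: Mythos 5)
The paper offers no proof of Fact~\ref{t:Ros}: it is quoted as a known theorem of Odell--Rosenthal and Rosenthal, with citations in place of an argument. What you have written is therefore not a variant of the paper's proof but a self-contained derivation of the cited result from the other black boxes the paper does assume, namely Rosenthal's $l_1$-theorem (Fact~\ref{t:Ros0}, packaged as Lemma~\ref{l:crit}), Talagrand's equivalences (Fact~\ref{t:Talagr}), the BFT dichotomy (Fact~\ref{BFT}), and the classical Baire-class-1 dictionary (Lemma~\ref{Baire-1}, Corollary~\ref{c:fr=B1}, Lemma~\ref{l:Baire1}). I checked the argument and it is correct and non-circular: none of those ingredients is derived in the paper from Fact~\ref{t:Ros} itself. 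The one step you rightly flag as needing care, the identification $\cls_p(B_V)=r(B_{V^{**}})$ in $\R^{B^*}$, does work exactly as you say --- $r$ is injective and weak$^*$-to-pointwise continuous, $r(B_{V^{**}})$ is pointwise compact and contains $B_V$, and Goldstine gives density --- and the same compactness argument validates $r(W)=\cls_p(S)$ in the cardinality step. Two small remarks. First, your route delivers condition (3) in the form ``fragmented, hence Baire~1 on the metrizable compactum $B^*$''; the original Odell--Rosenthal theorem is slightly stronger (each $v^{**}\in B_{V^{**}}$ is a weak$^*$ limit of a \emph{sequence} from $B_V$), but for real-valued functions on a compact metric space Lemma~\ref{l:Baire1}.2 closes that gap, so the Fact as stated is fully recovered. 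Second, the equivalence with (2) is genuinely easier in your framework than in \cite{Ros2}, because the $\beta\N$ alternative of the BFT dichotomy hands you the cardinality $2^{2^{\aleph_0}}$ for free; this is a clean illustration of why the paper can afford to treat these three statements as interchangeable.
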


Thus a separable Banach space $V$ does not contain an isomorphic
copy of $l_1$ if and only if every element $x^{**} \in V^{**}$ is
Baire 1 when restricted to the unit ball $B_{V^*}$ with its
weak$^*$ topology $\sigma(V^*,V)$. This classical result of Odell
and Rosenthal was generalized in \cite{SS}.

Let $A$ be a weak$^*$ compact subset of a dual Banach space
$V^*$. Following \cite{RSU} we say that $A$ has the \emph{scalar
point of continuity property} if for each weak$^*$ compact subset
$M$ of $A$ and every $x^{**} \in V^{**}$, the restriction
$x^{**}|_M$ of $x^{**}$ to $M$ has a point of continuity.

\begin{fact} \label{t:SS} \emph{(E. Saab and P. Saab \cite{SS})}
A Banach space $V$ (separable or not) is Rosenthal if and only if
$B_{V^*}$ has the scalar point of continuity property.
\end{fact}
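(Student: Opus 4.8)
The plan is to derive Fact~\ref{t:SS} from the $l_1$-dichotomy machinery already assembled in this section --- in particular Lemma~\ref{l:crit}, Fact~\ref{t:Talagr} and Corollary~\ref{c:fr=B1}.1 --- together with the classical theorems of Banach--Alaoglu and Goldstine. The key reduction is to identify the pointwise closure $\cls_p(B_V)$, with the elements of $V^{**}$ regarded as bounded functions on the weak$^*$ compact ball $B_{V^*}$, as $B_{V^{**}}$ itself; everything else is then a routine translation among ``Rosenthal family'', ``fragmented map'' and ``point of continuity on compact subsets''.

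First I would make this identification precise. Let $r\colon V^{**}\to\R^{B_{V^*}}$ be the restriction map $x^{**}\mapsto x^{**}|_{B_{V^*}}$. It is injective because $B_{V^*}$ linearly spans $V^*$, and on bounded subsets of $V^{**}$ the topology $\sigma(V^{**},V^*)$ agrees with the topology of pointwise convergence on $B_{V^*}$ (again by spanning, together with boundedness). By Banach--Alaoglu $(B_{V^{**}},\sigma(V^{**},V^*))$ is compact, so $r$ maps it homeomorphically onto a compact, hence closed, subset of $\R^{B_{V^*}}$ which contains $r(B_V)$; and by Goldstine's theorem $B_V$ is $\sigma(V^{**},V^*)$-dense in $B_{V^{**}}$, so $r(B_V)$ is dense in $r(B_{V^{**}})$. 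Therefore $\cls_p(B_V)=r(B_{V^{**}})$; that is, the pointwise closure of the unit ball of $V$ in $\R^{B_{V^*}}$ is exactly the family of all restrictions to $B_{V^*}$ of elements of $B_{V^{**}}$.

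Next I would chain the equivalences. By Lemma~\ref{l:crit}, $V$ is Rosenthal iff $B_V$ is a Rosenthal family for $B_{V^*}$, i.e.\ iff $\cls_p(B_V)\subset{\mathcal F}(B_{V^*})$. By the previous step, and since a real-valued function is fragmented iff every nonzero scalar multiple of it is, this holds iff every $x^{**}\in V^{**}$ is a fragmented function $(B_{V^*},w^*)\to\R$. Because $B_{V^*}$ is compact, Corollary~\ref{c:fr=B1}.1 gives ${\mathcal F}(B_{V^*})=B_r'(B_{V^*})$, so this in turn is equivalent to saying that each $x^{**}|_{B_{V^*}}$ is barely continuous, i.e.\ that $x^{**}|_M$ has a point of continuity for every closed subset $M$ of $B_{V^*}$. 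Since the closed subsets of the compact space $B_{V^*}$ are precisely its weak$^*$ compact subsets, this is exactly the statement that $B_{V^*}$ has the scalar point of continuity property, completing the proof.

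The main obstacle is the identification $\cls_p(B_V)=B_{V^{**}}$ in the first step: one must be careful that pointwise convergence on the unit ball $B_{V^*}$ (rather than on all of $V^*$) already detects the $\sigma(V^{**},V^*)$-topology on bounded sets --- this is where the spanning property of $B_{V^*}$ and the uniform boundedness of everything in play are used --- and that Goldstine's theorem is invoked in the form ``$B_V$ is weak$^*$ dense in $B_{V^{**}}$''. Once that is settled, the substantive work is done by the results already cited: the passage among Rosenthal families, sequential precompactness and the absence of $l_1$-sequences is packaged in Fact~\ref{t:Talagr} and Lemma~\ref{l:crit}, and the passage between fragmentability and the point-of-continuity property is Corollary~\ref{c:fr=B1}.1.
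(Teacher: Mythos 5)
Your argument is correct, but it is worth noting that the paper itself offers no proof of Fact~\ref{t:SS}: it is quoted as an external result of E.~Saab and P.~Saab, and the paper's own Proposition~\ref{p:crit} then runs the implication in the direction opposite to yours, deriving the fragmentability characterizations \emph{from} Fact~\ref{t:SS} together with Corollary~\ref{c:fr=B1}.1. What you have done instead is assemble a derivation of the Saab--Saab theorem from the other quoted ingredients: Lemma~\ref{l:crit} (which the paper obtains from Rosenthal's theorem, Fact~\ref{t:Ros0}, via Talagrand's equivalences, Fact~\ref{t:Talagr}), the Alaoglu--Goldstine identification of $\cls_p(B_V)$ inside $\R^{B_{V^*}}$ with the restrictions of $B_{V^{**}}$, and the equality ${\mathcal F}(X)=B_r'(X)$ for compact $X$. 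Each step checks out: the restriction map is a homeomorphism of $(B_{V^{**}},w^*)$ onto a compact subset of $\R^{B_{V^*}}$ because evaluations at points of $B_{V^*}$ already generate $\sigma(V^{**},V^*)$; Goldstine gives density of $B_V$; fragmentability is invariant under nonzero scaling, so passing from $B_{V^{**}}$ to all of $V^{**}$ is harmless; and the closed subsets of the compact ball $B_{V^*}$ are exactly its weak$^*$ compact subsets, so bare continuity is literally the scalar point of continuity property. There is no circularity, since Lemma~\ref{l:crit} and Fact~\ref{t:Talagr} do not rest on Fact~\ref{t:SS} in this paper. The one caveat to keep in mind is that this is a reduction rather than a from-scratch proof: all the analytic depth (the equivalence of ``no $l_1$-subsequence'' with ``the pointwise closure consists of fragmented functions'') is concentrated in Fact~\ref{t:Talagr}, which is itself only quoted from Talagrand's book. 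What your route buys is a clean demonstration that, granted the Rosenthal--Talagrand machinery, the Saab--Saab characterization is formally equivalent to Lemma~\ref{l:crit} and need not be imported as an independent black box.
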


The following result gives three characterizations of Rosenthal
spaces in terms of fragmentability.

\begin{prop} \label{p:crit}
Let $V$ be a Banach space. The following conditions are
equivalent:
\begin{enumerate}
\item
$V$ is a Rosenthal Banach space.
\item
Each $x^{**} \in V^{**}$ is a fragmented map when restricted to the
weak${}^*$ compact ball $B_{V^*}$.
\item
Every bounded
subset $A$ of the dual $V^*$ is $(\tau_{w^*}, \mu_w)$-fragmented
(that is, $id_A: (A,\tau_{w^*}) \to (A,\mu_w)$ is
fragmented)
where $\tau_{w^*}$ is the weak${}^*$ topology and $\mu_w$ is the
weak uniformity on $A$.
\item
$B_V$ is an eventually fragmented family of functions on $(B_{V^*}, w^*)$.
\end{enumerate}
\end{prop}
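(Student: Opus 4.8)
The plan is to prove the cycle of equivalences (1) $\Leftrightarrow$ (2) $\Leftrightarrow$ (3) $\Leftrightarrow$ (4) by first establishing the bridge between $V^{**}$ restricted to $B_{V^*}$ and Rosenthal-type properties, and then transferring everything to bounded subsets $A \subset V^*$ and to the unit ball $B_V$ viewed as a family of functions on $(B_{V^*}, w^*)$.

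\textbf{Step 1: (1) $\Leftrightarrow$ (2).} This is essentially a restatement of the Saab--Saab theorem (Fact \ref{t:SS}). Indeed, by Fact \ref{t:SS}, $V$ is Rosenthal iff $B_{V^*}$ has the scalar point of continuity property, i.e.\ for every weak$^*$-compact $M \subset B_{V^*}$ and every $x^{**} \in V^{**}$, the restriction $x^{**}|_M$ has a point of continuity. Since $B_{V^*}$ is weak$^*$-compact, hence hereditarily Baire, Lemma \ref{Baire-1}.1 (with $(X,\tau) = (M, w^*)$, $(Y,\rho) = \R$) says that $x^{**}|_M$ is fragmented iff it is barely continuous iff every closed subset of $M$ admits a point of continuity of the restriction. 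So ``$x^{**}$ is a fragmented map on $(B_{V^*}, w^*)$'' is exactly the statement that $x^{**}$ restricted to every weak$^*$-compact $M \subset B_{V^*}$ is barely continuous, which is precisely the scalar point of continuity property for the single functional $x^{**}$. Ranging over all $x^{**}$ gives (1) $\Leftrightarrow$ (2).

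\textbf{Step 2: (2) $\Leftrightarrow$ (3).} The implication (3) $\Rightarrow$ (2) is immediate: take $A = B_{V^*}$; if $\mathrm{id}_A : (A, \tau_{w^*}) \to (A, \mu_w)$ is fragmented, then composing with the continuous (indeed weakly continuous) functional $x^{**}: (A, \mu_w) \to \R$ and using Lemma \ref{simple-fr}.2 shows $x^{**}|_{B_{V^*}}$ is fragmented. For (2) $\Rightarrow$ (3): let $A \subset V^*$ be bounded; after rescaling we may assume $A \subset B_{V^*}$. The weak uniformity $\mu_w$ on $A$ is generated by the family of (restrictions of) functionals $\{x^{**} : x^{**} \in B_{V^{**}}\}$, each of which is weak$^*$-continuous on $A$ only if it lies in $V$ --- but in general it is merely a fragmented map on $(B_{V^*}, w^*)$ by hypothesis (2), hence a fragmented map on the subset $(A, w^*)$ by Lemma \ref{simple-fr}.2 applied to the inclusion. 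Now invoke Lemma \ref{simple-fr}.3: the family $\{x^{**}|_A\}_{x^{**} \in B_{V^{**}}}$ separates points of $A$ and each $x^{**} \circ \mathrm{id}_A$ is fragmented; since $\mu_w$ is precisely the weak uniformity on $A$ generated by this separating family, the conclusion of Lemma \ref{simple-fr}.3 gives that $\mathrm{id}_A : (A, w^*) \to (A, \mu_w)$ is fragmented. (One checks $A$ is compact in $w^*$ after passing to the weak$^*$ closure, which does not affect boundedness or fragmentability by Lemma \ref{simple-fr}.5; alternatively one notes that the fragmentability condition only requires the range uniformity and not compactness of the domain, so the argument via Lemma \ref{simple-fr}.3 applies to the weak$^*$ closure and restricts back.)

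\textbf{Step 3: (1) $\Leftrightarrow$ (4).} Here we use Lemma \ref{l:crit}: $V$ is Rosenthal iff $B_V$ is a Rosenthal family for $(B_{V^*}, w^*)$, i.e.\ $\cls_p(B_V) \subset \mathcal{F}(B_{V^*})$. Since $(B_{V^*}, w^*)$ is a compact space and $B_V \subset C(B_{V^*})$ is bounded, Proposition \ref{p:aft} (equivalence of (1) and (2) there) tells us that $B_V$ is a Rosenthal family iff every sequence in $B_V$ has a subsequence which is a fragmented family on $B_{V^*}$ --- that is, iff $B_V$ is a sub-fragmented family of functions on $(B_{V^*}, w^*)$. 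This is exactly condition (4). \textbf{The main obstacle} is making sure all the pieces fit cleanly: the only genuinely nontrivial inputs are Fact \ref{t:SS} (Saab--Saab) and Proposition \ref{p:aft}, and the care needed is in the bookkeeping of Step 2 --- verifying that the weak uniformity $\mu_w$ on a bounded set $A$ is indeed the weak uniformity induced by the separating family $\{x^{**}|_A\}$ so that Lemma \ref{simple-fr}.3 applies, and in handling the compactness hypothesis there by passing to weak$^*$ closures via Lemma \ref{simple-fr}.5. Everything else is a direct appeal to the already-established lemmas on fragmentability.
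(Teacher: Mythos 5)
Your proposal is correct and follows essentially the same route as the paper: (1)$\Leftrightarrow$(2) is Saab--Saab (Fact \ref{t:SS}) combined with the identification of fragmented and barely continuous maps on the compact ball, and (1)$\Leftrightarrow$(4) is exactly Lemma \ref{l:crit} plus Proposition \ref{p:aft}. The only point to correct is in your Step 2: Lemma \ref{simple-fr}.3 does not apply there, since its hypothesis requires the target $Y$ to be compact with the separating maps $f_i$ continuous on $Y$, whereas the functionals $x^{**}|_A$ are not weak$^*$-continuous on $A$ and $(A,\mathrm{weak})$ is not compact; the correct (and simpler) tool is Remark \ref{r:fr}.2 --- fragmentability with respect to $\mu_w$ need only be checked on the subbasic entourages determined by individual $x^{**}\in V^{**}$, which reduces (3) immediately to the statement that each $x^{**}|_A$ is fragmented, i.e.\ to (2). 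This is exactly how the paper handles (1)$\Leftrightarrow$(3), and with that substitution your argument goes through verbatim.
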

\begin{proof}
By Corollary \ref{c:fr=B1}.1, ${\mathcal F}(X)=B_r'(X)$ for
$X=B_{V^*}$. Now Fact \ref{t:SS} yields the equivalence (1)
$\Leftrightarrow$ (2).

(1) $\Leftrightarrow$ (3): This follows by Fact \ref{t:SS} and
Lemma \ref{Baire-1}.1 taking into account Remark \ref{r:fr}.2.

For (1) $\Leftrightarrow$ (4) use
Proposition \ref{p:aft}
and Lemma \ref{l:crit}.
\end{proof}

\begin{remark} \label{r:WRN}
\ben
\item
The equivalence of (1) and (2) in Proposition \ref{p:crit} is
indeed a natural generalization of the Odell-Rosenthal result \cite{OR}
because for compact metrizable $X$ we have $\calB_1(X)={\mathcal
F}(X)$ (Corollary \ref{c:fr=B1}.2) and the weak${}^*$ compact ball
$B_{V^*}$ is metrizable for separable~$V$.
\item
Let $V$ be a Banach space and $A$ a weak${}^*$ compact
absolutely convex subset of $V^*$. Then by \cite[Theorem 9]{RSU},
$A$ has the scalar point of continuity property if and only if $A$
is a \emph{weak Radon-Nikodym subset} (WRN for short). We
refer to \cite{Bo, SS, RSU} for exact definitions and additional
information about WRN subsets.
See also Theorem \ref{t:WRN} below about WRN compact spaces.
\een
\end{remark}


\subsection{Convex hulls}

The following result is proved in
Bourgain-Fremlin-Talagrand \cite{BFT}.

\begin{fact} \label{t:BFT2} \cite[Theorem 5E]{BFT}
Let $X$ be a complete metric space, $A \subset \calB_1(X)$ a pointwise
compact uniformly bounded set.
Then its convex hull ${\co}(A)$ is relatively compact in $\calB_1(X)$
(equivalently, $\cls_p({\co}(A)) \subset \calB_1(X)$).
\end{fact}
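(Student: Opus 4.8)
The plan is to reduce the statement about convex hulls to the two main tools already available: the characterization of $\calB_1(X)$-membership via fragmentability (Corollary \ref{c:fr=B1}.2, which gives $\calB_1(X) = \mathcal{F}(X)$ for Polish $X$) together with the sequential-compactness characterization of Rosenthal families (Fact \ref{t:Talagr}), and the BFT dichotomy itself (Fact \ref{BFT}). First I would observe that $A$, being a uniformly bounded pointwise compact subset of $\calB_1(X)$, is in particular a Rosenthal family for $X$: by Fact \ref{t:Talagr} (applied with $F = A$, or rather with a countable bounded $F \subset C(X)$ having $A$ in its closure — one uses that $X$ is Polish so each $f \in A$ is a pointwise limit of continuous functions by Lemma \ref{l:Baire1}.2, hence $A \subset \cls_p(Z)$ for a suitable countable bounded $Z \subset C(X)$), every sequence in $A$ has a pointwise convergent subsequence, and hence no sequence in $A$ is equivalent to the $l_1$-basis. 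Then I would aim to show the same for the convex hull: $\co(A)$ must again be a Rosenthal family, equivalently $\cls_p(\co(A)) \subset \mathcal{F}(X) = \calB_1(X)$.

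The key step is the stability of the $l_1$-free property under taking convex hulls. Here I would invoke the functional-analytic heart of the matter: viewing everything inside $C(K)$ where $K = \cls_p(Z)$ (a Rosenthal compactum), or more directly inside $\R^X$, one uses that $A$ spans no copy of $l_1$, hence its closed convex hull (in an appropriate Banach space $C(L)$ for $L$ a metrizable compactification of $X$ on which the relevant functions live) contains no $l_1$-sequence either — this is exactly the content needed, and it is precisely the sort of convex-hull stability that Fact \ref{t:Talagr} packages: conditions (1)–(4) there are convex-hull stable because ``sequentially precompact in $\R^X$'' passes to convex hulls for uniformly bounded families (a diagonal/Ramsey argument on finite convex combinations, which is the classical Rosenthal–Bourgain–Fremlin–Talagrand estimate). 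Granting that $\co(A)$ satisfies condition (2) of Fact \ref{t:Talagr}, condition (3) gives $\cls_p(\co(A)) \subset \mathcal{F}(X) = \calB_1(X)$, which is the assertion, and compactness of $\cls_p(\co(A))$ in $\R^X$ (from uniform boundedness) gives the ``relatively compact in $\calB_1(X)$'' phrasing.

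The main obstacle is establishing that convex combinations do not manufacture an $l_1$-sequence out of an $l_1$-free pointwise compact set — this is the genuinely nontrivial input and is really where the original BFT argument does its work (it is not a formal consequence of the definitions, and needs the quantitative ``$\ell_1$-index'' or oscillation-stabilization machinery). Since this excerpt permits me to assume the earlier-stated results, I would lean on Fact \ref{t:Talagr}: its equivalence of (1)–(5) already encodes this stability once one checks that ``each sequence has a pointwise convergent subsequence'' is inherited by $\co(A)$ for uniformly bounded $A$ — and for that inheritance I would give the standard argument: pick a sequence $(g_n)$ in $\co(A)$, write each $g_n = \sum_j \lambda_{n,j} f_{n,j}$ with $f_{n,j} \in A$, pass to a subsequence so that all the (countably many) functions $f_{n,j}$ involved converge pointwise (possible since $A$ is a Rosenthal family, via a diagonal extraction over the separable parameter space), then thin further so the coefficient sequences stabilize in the sense required, and conclude $(g_n)$ has a pointwise convergent subsequence. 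Finally, I would record that $X$ being merely complete metric rather than Polish (separable) is handled by restricting attention to a separable closed subspace or by noting that each countable subfamily lives on a Polish piece, so that Corollary \ref{c:fr=B1}.2 applies where needed.
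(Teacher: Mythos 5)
This statement is quoted in the paper as a Fact, with no proof supplied: it is exactly Theorem 5E of Bourgain--Fremlin--Talagrand, and the paper's own Proposition \ref{c:conv} (convex-hull stability of Rosenthal families) is \emph{deduced from} it. Your proposal runs this logic in reverse: you try to derive the Fact from a convex-hull stability principle that you then justify by a ``standard'' diagonal argument. That is where the genuine gap lies. Writing $g_n=\sum_j \lambda_{n,j}f_{n,j}$ and extracting a subsequence along which all the $f_{n,j}$ converge pointwise does not let you ``thin further so the coefficient sequences stabilize'': the number of summands in $g_n$ is finite but unbounded in $n$, so there is no finite coefficient vector to stabilize, and pointwise convergence of the individual $f_{n,j}$ says nothing about convergence of long averages of them (think of $g_n=\frac1n\sum_{j=1}^n f_{n,j}$). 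The correct argument identifies cluster points of $\co(A)$ with barycenters $x\mapsto\int_A f(x)\,d\mu(f)$ of Radon probability measures $\mu$ on the pointwise compact set $A$, and then proves that such barycenters are again of the first Baire class; this uses universal measurability of Baire~1 functions and genuine measure-theoretic work that is not recoverable from Facts \ref{t:Talagr} and \ref{BFT} as stated. You in fact concede in the proposal that this step ``is not a formal consequence of the definitions'' and is ``where the original BFT argument does its work'' --- but then the subsequent paragraph proceeds as if the naive extraction closed it.

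Two secondary points. First, Fact \ref{t:Talagr} is stated for a \emph{bounded subset of $C(X)$ with $X$ compact}; here $A$ consists of Baire~1 functions on a complete metric space, so invoking it requires the reduction you sketch (embedding $A$ into $\cls_p(Z)$ for a countable bounded $Z\subset C(X)$), and that reduction presupposes that $A$ is pointwise separable, which a pointwise compact subset of $\calB_1(X)$ need not be; one must instead reduce to countable subfamilies via the Fr\'echet property of Rosenthal compacta (Fact \ref{BFT}.1), and even then only the sequences, not the whole of $A$, are captured. Second, none of the conditions (1)--(5) of Fact \ref{t:Talagr} is asserted there to be convex-hull stable; that stability is precisely the content of the present Fact, so appealing to it is circular within the paper's architecture. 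The honest conclusion is that this statement cannot be proved from the quoted ingredients and must be taken, as the paper takes it, from \cite[Theorem 5E]{BFT}.
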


For Rosenthal families we get the following result.

\begin{prop} \label{c:conv}
Let $F$ be a Rosenthal family for a compact space $X$. Then its
convex hull ${co}(F)$ is also a Rosenthal family for $X$.
\end{prop}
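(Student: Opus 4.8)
The plan is to reduce the problem, via a countable-reduction argument, to the metrizable situation where Fact \ref{t:BFT2} of Bourgain-Fremlin-Talagrand applies directly. First I would invoke Fact \ref{t:Talagr}: since $F$ is a Rosenthal family for $X$, it suffices to check that the enlarged family $\co(F)$ satisfies one of the equivalent conditions there; the most convenient is condition (5), namely that every \emph{countable} subfamily of $\co(F)$ is a Rosenthal family for $X$. So fix a countable set $\{h_n\}_{n\in\N}\subset\co(F)$. Each $h_n$ is a finite convex combination of elements of $F$, so there is a countable subset $S=\{f_k\}_{k\in\N}\subset F$ with $\{h_n\}\subset\co(S)$. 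Again by Fact \ref{t:Talagr} (the implication (5)$\Rightarrow$(3) applied to $F$), the countable family $S$ is itself a Rosenthal family for $X$, i.e. $\cls_p(S)\subset\F(X)$, and by Lemma \ref{l:4}, since $S$ is a fragmented family (equivalently barely continuous) it is enough to transport the situation to a metrizable quotient.

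Next I would use the quotient construction already employed in the proof of Proposition \ref{p:aft}: define the pointwise continuous map $q:X\to\R^S$, $q(x)(f)=f(x)$, and let $X'=q(X)\subset\R^S$, which is a compact metrizable space because $S$ is norm bounded and countable. Each $f\in S$ factors as $f=f'\circ q$ with $f'\in C(X')$, and by Lemma \ref{l:genQuot}.3(a) the family $S'=\{f':f\in S\}$ is a Rosenthal family for the metrizable compact space $X'$. Since $X'$ is metrizable, Proposition \ref{p:aft} (the equivalence (1)$\Leftrightarrow$(3)) gives $\cls_p(S')\subset\calB_1(X')$. Now $X'$ is metrizable compact, hence a complete (indeed Polish) metric space, so Fact \ref{t:BFT2} applies to the pointwise compact uniformly bounded set $\cls_p(S')\subset\calB_1(X')$: its convex hull is relatively compact in $\calB_1(X')$, i.e. $\cls_p(\co(\cls_p(S')))\subset\calB_1(X')$. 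In particular $\cls_p(\co(S'))\subset\calB_1(X')$, so $\co(S')$ is a Rosenthal family for $X'$ by Proposition \ref{p:aft} again.

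Finally I would pull this back: by Lemma \ref{l:genQuot}.3(a), since $\co(S)=\co(S')\circ q$ (composition with $q$ commutes with taking finite convex combinations), $\co(S)$ is a Rosenthal family for $X$; a fortiori the countable subfamily $\{h_n\}\subset\co(S)$ is a Rosenthal family for $X$. As the countable set $\{h_n\}$ was arbitrary in $\co(F)$, Fact \ref{t:Talagr}.(5) yields that $\co(F)$ is a Rosenthal family for $X$, which is the assertion. The only real subtlety — the ``main obstacle'' — is the passage through Fact \ref{t:BFT2}, which genuinely needs a \emph{complete metric} (Polish) base space; this is exactly why the countable reduction followed by the metrizable-quotient construction of Proposition \ref{p:aft} is forced on us, rather than trying to argue convex-compactness directly on the possibly-nonmetrizable $X$. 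One should also double-check that composition with the (not necessarily injective) map $q$ really does induce a bijection $\cls_p(\co(S'))\to\cls_p(\co(S))$, but this is immediate from Lemma \ref{l:genQuot}.1--2 together with compactness of $\cls_p(\co(S'))$.
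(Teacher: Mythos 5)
Your proposal is correct and follows essentially the same route as the paper's own proof: reduction to countable subfamilies via Fact \ref{t:Talagr}(5), passage to the metrizable quotient $X'\subset\R^S$ as in Proposition \ref{p:aft}, application of the Bourgain--Fremlin--Talagrand convex-hull theorem (Fact \ref{t:BFT2}) there, and pullback using the linearity of $\gamma(\phi)=\phi\circ q$ together with Lemma \ref{l:genQuot}.3. You merely spell out the metrizable case in more detail than the paper, which disposes of it in one line.
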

\begin{proof} \emph{First case:}
For a compact metrizable $X$
combine Fact \ref{t:BFT2} and Proposition \ref{p:aft}.

\emph{Second case:} For a general compact space $X$,
by Fact \ref{t:Talagr} we have only to examine sequences. That is,
it is enough to show that every countable subset $M$ of ${co}(F)$ is a Rosenthal family.
There exists a countable subset $S \subset F$ such that $M \subset {co}(S)$.

As in the proof of Proposition \ref{p:aft} consider the quotient
$q: X \to X' \subset \R^S$ induced by the collection $S$.
Then every $f \in S$ induces a continuous map $f': X' \to \R$
such that $f=f' \circ q=\g(f')$.

By our assumption $S$ is a Rosenthal family
for $X$. Then $S':=\{f': \ f \in S\}$ is a Rosenthal family for
$X'$ (use Lemma \ref{l:genQuot}.3). Since $X'$ is metrizable we
can apply the first case and deduce that the convex hull $co
(S')$ is a Rosenthal family for $X'$. The map
$$\gamma: \R^{X'} \to \R^{X}, \ \ \gamma(\phi)=\phi \circ q$$
(see Lemma \ref{l:genQuot}) is linear and $\g(S')=S$. Therefore, $\gamma(co (S'))=co (S).$ It follows,
by Lemma \ref{l:genQuot}.3, that the collection $co (S)$, and hence its subcollection $M$, is a Rosenthal
family for $X$.
\end{proof}

\subsection{The natural affine extension map
$T: b \calB_1(X)  \to b \calB_1(B^*)$}

For every compact metric space $X$ denote by $b \calB_1(X)$ the collection
of bounded Baire 1 real valued functions on $X$. That is,
$$
b \calB_1(X)= \calB_1(X) \cap l_{\infty}(X).
$$

Then $b \calB_1(X)$ is a topological subspace of $\calB_1(X)$ with
respect to the pointwise topology (inherited from $\R^X$). One can
define a natural injective map
$$
T: b \calB_1(X)  \to b\calB_1(B^*),
$$
where $B^*$, as before, is the weak$^*$ compact unit ball of
$C(X)^*$. We will use Riesz representation theorem and Lebesgue's
Dominated Convergence Theorem.

Each $f \in b \calB_1(X)$ is universally measurable for every compact
metric space $X$ (see for
example \cite[Proposition 1F]{BFT}).
That is, for every measure $\mu \in B^*$ we can define
$$
(Tf)(\mu):= \int f d \mu.
$$

This map is well defined. Indeed, first note that when $f \in
C(X)$, $T(f)=i(f)$, where
$$
i: C(X) \hookrightarrow C(B^*), \ \ i(f)(\mu):=\langle f, \mu
\rangle =\int f d \mu
$$
is the canonical isometric inclusion of the corresponding Banach
spaces and
$$
\langle\, \cdot, \cdot \rangle: C(X) \times C(X)^* \to \R
$$
is the canonical bilinear mapping. Now if $f \in b\calB_1(X)$ then
$f$ is a pointwise limit of a sequence of continuous functions
$h_n \in C(X)$ (Lemma \ref{l:Baire1}.2). Since $f: X \to \R$ is
a bounded function we can assume in addition that the sequence
$h_n$ is uniformly bounded.
By Lebesgue's Convergence Theorem it follows that
$T(f)$ is a pointwise limit of the sequence $T(h_n)=i(h_n),$ ${n
\in \N}$. Since every $i(h_n) \in C(B^*)$ we conclude by Lemma
\ref{l:Baire1}.(2) that $T(f) \in \calB_1(B^*)$. The sequence
$i(h_n)$ is uniformly bounded in $C(B^*)$ hence $T(f)$ is a
bounded function. This means that $T(f) \in b \calB_1(B^*)$.

The map $T$ is injective because $T(f)(\delta_x)=f(x)$ for every
point mass $\delta_x \in B^*$ ($x \in X$).

\begin{remark} \label{r:functionals} 
Each $T(f)$ for $f \in b \calB_1 (X)$ can be treated as an
element of the second dual $C(X)^{**}$ of $C(X)$. Moreover the
pointwise topology of $\calB_1(B^*)$ and the weak$^*$-topology on
$C(X)^{**}$ agree on
$T(b \calB_1(X))$.
\end{remark}

\begin{lem} \label{l:seq-cont} Let $X$ be a compact metric space.
For every uniformly bounded subset $A \subset b\calB_1(X)$ the
restriction $T|_A$ of the natural injective map
$$T: b\calB_1(X)
\to b \calB_1(B^*) \cap C(X)^{**}$$ on $A$ is sequentially
continuous.
Furthermore, $T(A)$ is also uniformly bounded.
\end{lem}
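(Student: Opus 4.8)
The plan is to reduce both assertions to Lebesgue's Dominated Convergence Theorem, applied to the total variation measures of the elements of $B^*$. First I would set $M := \sup_{f \in A} \|f\|_\infty < \infty$ (this exists precisely because $A$ is uniformly bounded), and recall the two facts already established just before the lemma: every $f \in b\calB_1(X)$ is Borel, hence $\mu$-integrable for every $\mu \in B^* \subset C(X)^*$, and $T(f)(\mu) = \int_X f\, d\mu$. By Remark \ref{r:functionals} the pointwise topology of $\calB_1(B^*)$ and the weak${}^*$-topology of $C(X)^{**}$ coincide on $T(b\calB_1(X))$, so it is harmless to work throughout with the pointwise topology on $b\calB_1(B^*)$.

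For the uniform boundedness of $T(A)$ I would simply estimate, for $f \in A$ and $\mu \in B^*$,
\[
|T(f)(\mu)| = \left| \int_X f\, d\mu \right| \le \|f\|_\infty \cdot \|\mu\| \le M,
\]
so that $\|T(f)\|_\infty \le M$ for every $f \in A$; thus $T(A)$ is uniformly bounded by the same constant.

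For the sequential continuity of $T|_A$, I would take a sequence $f_n \in A$ converging pointwise on $X$ to some $f \in A$, fix an arbitrary $\mu \in B^*$, and pass to its total variation measure $|\mu|$, which is a positive Radon measure with $|\mu|(X) = \|\mu\| \le 1$. Then
\[
|T(f_n)(\mu) - T(f)(\mu)| = \left| \int_X (f_n - f)\, d\mu \right| \le \int_X |f_n - f|\, d|\mu|,
\]
and since $|f_n - f| \to 0$ pointwise on $X$ with $|f_n - f| \le 2M$ (a $|\mu|$-integrable constant, $|\mu|$ being finite), the Dominated Convergence Theorem gives $\int_X |f_n - f|\, d|\mu| \to 0$, hence $T(f_n)(\mu) \to T(f)(\mu)$. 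As $\mu \in B^*$ is arbitrary, $T(f_n) \to T(f)$ pointwise on $B^*$, i.e. in $b\calB_1(B^*)$, which is exactly sequential continuity of $T|_A$.

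There is no real obstacle here; the only two points that need a word of care are that $B^*$ consists of signed (not merely positive) Radon measures — handled by passing to the Jordan decomposition, equivalently the measure $|\mu|$, so that the classical DCT applies — and the $\mu$-measurability of the functions $f_n - f$, which is precisely the universal measurability of bounded Baire-1 functions recorded just above. I do not expect the nonmetrizability or size of $B^*$ to cause any difficulty, since the entire argument is carried out one measure $\mu$ at a time.
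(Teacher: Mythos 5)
Your proposal is correct and follows the same route as the paper, whose proof consists precisely of the remark that Lebesgue's Dominated Convergence Theorem yields the sequential continuity and that the boundedness of $T(A)$ is easy; you have simply supplied the details (the reduction to the total variation measure $|\mu|$, the dominating constant $2M$, and the universal measurability of bounded Baire-1 functions) that the paper leaves implicit.
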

\begin{proof}
Lebesgue's Convergence Theorem implies that $T$ is
sequentially continuous. The boundedness of $T(A)$ is easy.
\end{proof}

\begin{prop} \label{p:B-star}
If $F \subset C(X)$ is a Rosenthal family for a compact metric
space $X$ then
the restriction of $T$ on $\cls_p(F)$ induces a homeomorphism
$$\cls_p(F) \to \cls_p(T(F)) \subset b \calB_1(B^*) \cap
C(X)^{**}.$$
\end{prop}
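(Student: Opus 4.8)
The plan is to show that the restriction $T|_{\cls_p(F)}$ is a continuous injective map from a compact space into a Hausdorff space, hence a homeomorphism onto its image, and to identify that image as $\cls_p(T(F))$. Throughout I work inside the pointwise topology on both $\calB_1(X) \subset \R^X$ and $b\calB_1(B^*) \subset \R^{B^*}$, and I use Remark \ref{r:functionals} to freely identify this latter topology with the weak${}^*$-topology on $C(X)^{**}$.

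First I would observe that since $F$ is a Rosenthal family for the compact metric $X$, the pointwise closure $\cls_p(F)$ is a compact subset of $\calB_1(B^*)$ — indeed by Lemma \ref{l:4} (or Proposition \ref{p:aft}, using ${\mathcal F}(X)=\calB_1(X)$ from Corollary \ref{c:fr=B1}.2) we have $\cls_p(F) \subset \calB_1(X)$, and it is uniformly bounded because $F$ is norm bounded. Moreover, by Fact \ref{t:Talagr}, $F$ is sequentially precompact in $\R^X$, so $\cls_p(F)$ is the set of pointwise limits of sequences from $F$; in particular $\cls_p(F)$ is a compact metrizable space (again Lemma \ref{l:4}, (c)$\Rightarrow$(b)) and its topology is determined by convergent sequences. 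The target $b\calB_1(B^*) \cap C(X)^{**}$, viewed in the weak${}^*$-topology, is Hausdorff.

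The key step is continuity of $T|_{\cls_p(F)}$. Because $\cls_p(F)$ is metrizable, it suffices to check sequential continuity there, and this is exactly Lemma \ref{l:seq-cont}: for a uniformly bounded $A = \cls_p(F) \subset b\calB_1(X)$, the restriction $T|_A$ is sequentially continuous (Lebesgue's Dominated Convergence Theorem). Injectivity of $T$ on all of $b\calB_1(X)$ was established in the text via $T(f)(\delta_x) = f(x)$. Hence $T|_{\cls_p(F)}$ is a continuous injection from a compact space to a Hausdorff space, so it is a topological embedding, and $T(\cls_p(F))$ is compact. It remains to identify $T(\cls_p(F))$ with $\cls_p(T(F))$: since $T(F) \subset T(\cls_p(F))$ and the latter is pointwise closed, $\cls_p(T(F)) \subset T(\cls_p(F))$; conversely, given $\phi \in \cls_p(F)$, pick a sequence $f_n \in F$ with $f_n \to \phi$ pointwise on $X$ (possible by the remarks above), and then $T(f_n) \to T(\phi)$ in $b\calB_1(B^*)$ by sequential continuity of $T|_{\cls_p(F)}$, so $T(\phi) \in \cls_p(T(F))$. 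This gives $T(\cls_p(F)) = \cls_p(T(F))$, and the induced map $\cls_p(F) \to \cls_p(T(F))$ is the desired homeomorphism; that the image lies in $b\calB_1(B^*) \cap C(X)^{**}$ was part of the discussion preceding Remark \ref{r:functionals}.

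The main obstacle, and the only genuinely non-formal point, is the continuity/sequential-continuity interplay: one must be careful that sequential continuity on the metrizable compactum $\cls_p(F)$ really upgrades to continuity (it does, by metrizability) and that $T$ sends pointwise-convergent sequences in $\cls_p(F)$ to pointwise-convergent sequences in $\R^{B^*}$ — which is precisely where uniform boundedness of $\cls_p(F)$ and the Dominated Convergence Theorem enter, exactly as in Lemma \ref{l:seq-cont}. Everything else is the standard compact-to-Hausdorff-injection argument together with bookkeeping of closures.
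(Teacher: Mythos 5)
Your overall architecture (sequential continuity of $T$ on $\cls_p(F)$ via Lemma \ref{l:seq-cont}, injectivity via point masses, then the compact-to-Hausdorff-injection argument and the closure bookkeeping) matches the paper's, but the step you use to upgrade sequential continuity to continuity contains a genuine error. You assert that $\cls_p(F)$ is compact \emph{metrizable}, citing Lemma \ref{l:4}, (c)$\Rightarrow$(b). However, the hypothesis that $F$ is a Rosenthal family only gives you condition (d) of that lemma, namely $\cls_p(F)\subset\calB_1(X)$; it does not give condition (c), that $F$ is a \emph{fragmented family}, and Lemma \ref{l:4} does not claim (d)$\Rightarrow$(c). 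Indeed this implication is false in general: by Lemma \ref{l:4} together with Proposition \ref{p:3f-on}, metrizability of $\cls_p(fG)$ characterizes Asplund functions, whereas a Rosenthal family corresponds to tame ones, and the paper emphasizes that these classes differ (e.g.\ Remarks \ref{r:old}.2, and the non-metrizable Rosenthal compacta such as the Helly compact). So $\cls_p(F)$ need not be metrizable, and your justification for continuity of $T|_{\cls_p(F)}$, as well as your claim that every point of $\cls_p(F)$ is a pointwise limit of a sequence from $F$, collapses.

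The repair is exactly the paper's route: since $\cls_p(F)\subset\calB_1(X)$ with $X$ Polish, the compactum $\cls_p(F)$ is a Rosenthal compactum, hence Fr\'echet by the Bourgain--Fremlin--Talagrand theorem (Fact \ref{BFT}.(1)); a sequentially continuous map on a Fr\'echet space is continuous, so $T|_{\cls_p(F)}$ is a continuous injection from a compact space into a Hausdorff space and therefore a homeomorphism onto its image. Your identification $T(\cls_p(F))=\cls_p(T(F))$ then goes through without sequences: continuity gives $T(\cls_p(F))\subset\cls_p(T(F))$, and compactness of the image gives the reverse inclusion. With ``metrizable'' replaced by ``Fr\'echet'' (and the sequential description of $\cls_p(F)$ justified by Fr\'echet-ness rather than by sequential precompactness of $F$), your argument becomes correct and is essentially the paper's proof.
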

\begin{proof}
As $F$ is a Rosenthal family for $X$ its pointwise closure
${\cls}_p(F)$ is a compact subset of $\calB_1(X)$. Moreover,
${\cls}_p(F)$ is a uniformly bounded subset of $b \calB_1(X)$
because $F$ is bounded (Definition \ref{d:Ros-F}). In view of
Lemma \ref{l:seq-cont} the restricted map $T: {\cls}_p(F) \to
b\calB_1(B^*)$ is sequentially continuous.
By the Bourgain-Fremlin-Talagrand theorem, Fact \ref{BFT}.1,
we know that ${\cls}_p(F)$
is Fr\'echet.
For a
Fr\'echet space a sequentially continuous map is continuous and we
conclude that the map $T: {\cls}_p(F) \to b\calB_1(B^*)$ is a
continuous injection, and therefore a homeomorphism, of
${\cls}_p(F)$ onto its image in $b\calB_1(B^*)$.
\end{proof}


\begin{prop} \label{p:from X to B*}
Let $X$ be a compact space and $F \subset C(X)$.
The following conditions are equivalent:
\ben
\item
$F$ is a Rosenthal family for $X$.
\item
$F$ is a Rosenthal family for $B^*$.
\een
\end{prop}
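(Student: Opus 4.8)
The plan is to reduce the equivalence to a single observation: for a norm bounded subset of $C(K)$ with $K$ compact, being a Rosenthal family is purely an \emph{intrinsic} Banach space property, namely the absence of a subsequence equivalent to the unit basis of $l_1$ (this is the equivalence of (1) and (3) in Fact \ref{t:Talagr}). Since $i\colon C(X)\hookrightarrow C(B^*)$, $i(f)(\mu)=\int f\,d\mu$, is the canonical \emph{isometric} embedding, for any sequence $(f_n)$ in $C(X)$ and any finitely supported scalars $(a_k)$ we have $\|\sum_k a_k i(f_{n_k})\|_{C(B^*)}=\|i(\sum_k a_k f_{n_k})\|_{C(B^*)}=\|\sum_k a_k f_{n_k}\|_{C(X)}$; hence a subsequence of $(f_n)$ is equivalent to the $l_1$-basis if and only if the corresponding subsequence of $(i(f_n))$ is, and $F$ is norm bounded if and only if $i(F)$ is. Applying Fact \ref{t:Talagr} once to the compact space $X$ and once to the compact space $B^*$ then yields $(1)\Leftrightarrow(2)$ at once: $F$ is a Rosenthal family for $X$ iff $F$ is bounded with no $l_1$-subsequence iff $i(F)$ is bounded with no $l_1$-subsequence iff $i(F)$ is a Rosenthal family for $B^*$.

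For the record I would also note a softer, purely topological proof of the (easy) implication $(2)\Rightarrow(1)$ that bypasses the $l_1$-characterization. The evaluation map $e\colon X\to B^*$, $e(x)=\delta_x$, is a weak$^*$-continuous (indeed homeomorphic) embedding and $i(f)\circ e=f$ for all $f\in C(X)$, since $i(f)(\delta_x)=f(x)$; thus, with $\gamma\colon\R^{B^*}\to\R^X$, $\gamma(\phi)=\phi\circ e$, continuity of $\gamma$ together with compactness of $\cls_p(i(F))$ gives $\cls_p(F)=\gamma(\cls_p(i(F)))$. If $\cls_p(i(F))\subset\F(B^*)$, then each $\phi\circ e$ is fragmented on $X$ by Lemma \ref{simple-fr}.2, so $\cls_p(F)\subset\F(X)$, and $(1)$ follows. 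This is the easy half of Lemma \ref{l:genQuot}.3 transported along $e$; its other half would require $e$ to be surjective, which it is not --- and this asymmetry is precisely what makes $(1)\Rightarrow(2)$ the substantive direction.

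Finally, $(1)\Rightarrow(2)$ can also be proved without the $l_1$-characterization, by invoking Proposition \ref{p:B-star}; this route explains why the proposition naturally sits here. By the equivalence with condition (5) in Fact \ref{t:Talagr} (applied to $B^*$) it is enough to show that every countable $i(S_0)\subset i(F)$, with $S_0\subset F$ countable, is a Rosenthal family for $B^*$. Take the metrizable compact quotient $q\colon X\to X'$ determined by $S_0$ (as in the proof of Proposition \ref{p:aft}), write $f=f'\circ q$ with $f'\in C(X')$, and put $S_0'=\{f'\colon f\in S_0\}$; then $S_0'$ is a Rosenthal family for the compact metric space $X'$ (Lemma \ref{l:genQuot}.3). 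The pushforward of measures $q_*\colon B^*\to B^*_{X'}$ (the weak$^*$ unit ball of $C(X')^*$) is weak$^*$-continuous and satisfies $i_X(f)=i_{X'}(f')\circ q_*$, because $\int_X(f'\circ q)\,d\mu=\int_{X'}f'\,d(q_*\mu)$. Applying Proposition \ref{p:B-star} with $X'$ in place of $X$ shows $\cls_p(i_{X'}(S_0'))\subset b\calB_1(B^*_{X'})$; since $C(X')$ is separable, $B^*_{X'}$ is compact metrizable, hence Polish, so $b\calB_1(B^*_{X'})\subset\calB_1(B^*_{X'})=\F(B^*_{X'})$ by Corollary \ref{c:fr=B1}.2, and therefore $i_{X'}(S_0')$ is a Rosenthal family for $B^*_{X'}$. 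Pulling back along $q_*$ via the easy half of Lemma \ref{l:genQuot}.3 gives that $i_X(S_0)$ is a Rosenthal family for $B^*$, and Fact \ref{t:Talagr} concludes. In this second version the only steps needing care are the weak$^*$-continuity of $q_*$ and the identity $i_X(f)=i_{X'}(f')\circ q_*$; in the isometric argument of the first paragraph there is essentially no obstacle at all, since all the substance is already packaged in Fact \ref{t:Talagr} and Proposition \ref{p:B-star}.
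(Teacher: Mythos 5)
Your main argument is correct, and it takes a genuinely different (though equally short) route from the paper's. Both proofs reduce everything to Fact \ref{t:Talagr}, but the transfer mechanism differs: the paper uses the equivalence $(2)\Leftrightarrow(3)$ there (Rosenthal family $=$ sequential precompactness in the product space) and carries a pointwise convergent, uniformly bounded subsequence from $\R^X$ to $\R^{B^*}$ via Lebesgue's Dominated Convergence Theorem; you use the equivalence $(1)\Leftrightarrow(3)$ (Rosenthal family $=$ no $l_1$-subsequence) and observe that this condition is invariant under the isometric embedding $i\colon C(X)\hookrightarrow C(B^*)$, since it is phrased entirely in terms of norms of linear combinations. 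Your version buys symmetry --- it delivers both implications in one stroke, whereas the paper's proof as written only treats $(1)\Rightarrow(2)$ and leaves the (easy) converse implicit; the paper's version buys a concrete analytic picture of what the limit functions on $B^*$ are (integrals of the pointwise limits on $X$), which is what Proposition \ref{p:B-star} and the map $T$ then exploit. Your two supplementary arguments are also sound: the evaluation-map argument for $(2)\Rightarrow(1)$ is exactly the one-directional half of Lemma \ref{l:genQuot}.3 that does not need surjectivity, and the countable-reduction proof of $(1)\Rightarrow(2)$ via Proposition \ref{p:B-star}, the metrizable quotient $q\colon X\to X'$, the identity $i_X(f)=i_{X'}(f')\circ q_*$, and Corollary \ref{c:fr=B1}.2 is legitimate and free of circularity (Proposition \ref{p:B-star} does not rely on the present proposition); it is, however, considerably heavier machinery than either one-line proof requires.
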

\begin{proof}
We use Fact \ref{t:Talagr} which depends on sequences only.
Since $F$ is a Rosenthal family for $X$,
by Fact \ref{t:Talagr} the set $F$ is sequentially
precompact
in $\R^X$ (that is, every sequence in $F$ has a subsequence which
converges in $\R^X$). Since $F$ is bounded we can apply
Lebesgue's Dominated Convergence Theorem which
implies that $F$ is sequentially
precompact
also in $\R^{B^*}$.
Thus $\cls_p(F) \subset {\mathcal F}(B^*)$ by Fact \ref{t:Talagr}.
Hence $F$ is a Rosenthal family for $B^*$.
\end{proof}

\section{Tame dynamical systems}
 \label{ss:tame}



A compact \emph{metric} dynamical $G$-system $X$ is called {\em tame}
\cite{Gl-tame}
if in the dynamical BFT-dichotomy (Fact \ref{f:D-BFT})
the first alternative occurs, i.e. $E(X)$ is Rosenthal compact
(see also Proposition \ref{p:EisTame} below).

\begin{fact} \label{GMU} \cite{GMU}
A compact metric dynamical $G$-system $X$ is tame if and only if
every element of $E(X)$ is a Baire 1 function (equivalently,
fragmented) from $X$ to itself.
\end{fact}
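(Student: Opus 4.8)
The plan is to prove the equivalence ``$(G,X)$ tame $\iff$ every $p\in E(X)$ is a Baire 1 selfmap of $X$'' by reducing it, via the cited Bourgain–Fremlin–Talagrand machinery, to the topological structure of $E(X)\subset X^X$. Recall that $E(X)$ is the pointwise closure of the set $\breve G$ of translations in $X^X$, and that (since $X$ is compact metric) $X^X$ is itself a compact metrizable space only when $X$ is finite; in general $X^X$ with the product topology is compact but not metrizable, so one must be careful. The key observation is that tameness as defined means $E(X)$ is a Rosenthal compactum, i.e.\ homeomorphic to a pointwise compact subset of $\calB_1(Z)$ for some Polish $Z$; I will connect this to the pointwise behaviour of the individual maps $p\colon X\to X$.

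First I would fix a countable family $\{h_n\}\subset C(X)$ separating points of the compact metric space $X$ (e.g.\ a dense sequence in the unit ball of $C(X)$), so that the diagonal map $X\hookrightarrow \R^\N$, $x\mapsto (h_n(x))_n$, is a topological embedding. Then a selfmap $p\colon X\to X$ is fragmented (equivalently Baire 1, by Lemma~\ref{Baire-1}.2 applied to $X$ Polish and $X$ separable metric, together with Lemma~\ref{simple-fr}.3) if and only if each composition $h_n\circ p\colon X\to\R$ is fragmented, i.e.\ lies in $\calB_1(X)$. Now for each fixed $n$, as $g$ ranges over $G$ the functions $h_n\circ\breve g = (h_n)g$ form the orbit of $h_n$ under the (right) $G$-action on $C(X)$, and the pointwise closure of this orbit in $\R^X$ is exactly $E^{h_n}=\cls_p((h_n)G)$ as recorded in Section~\ref{s:env}. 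Applying the BFT dichotomy (Fact~\ref{BFT}.2) to the pointwise bounded sequence obtained from any sequence in the orbit $(h_n)G$, we get that $E^{h_n}\subset\calB_1(X)$ unless $E^{h_n}$ contains a copy of $\beta\N$.

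The main obstacle is passing correctly between ``every $p\in E(X)$ is a fragmented selfmap'' and ``$E(X)$ is a Rosenthal compactum'', because $E(X)$ lives in $X^X$, not directly in a space of real-valued Baire 1 functions on a Polish space. The bridge is the map $E(X)\to\prod_n E^{h_n}$, $p\mapsto (h_n\circ p)_n$, which is a continuous injection (injectivity because the $h_n$ separate points of $X$ and hence of $X^X$ restricted to $E(X)\subset X^X$; one uses that $p,q\in E(X)$ agree iff $h_n(px)=h_n(qx)$ for all $n$ and all $x$ in a countable dense subset of $X$, using continuity of each $h_n$). Thus $E(X)$ embeds into a countable product $\prod_n\cls_p((h_n)G)$. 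If each $p$ is fragmented then each $h_n\circ p\in\calB_1(X)$, so each $E^{h_n}\subset\calB_1(X)$ is a Rosenthal compactum, a countable product of such is a Rosenthal compactum (Rosenthal compacta are closed under countable products — one can realize $\calB_1(X)^\N$ inside $\calB_1(X\times\N)$ with $X\times\N$ Polish), and a closed subspace of a Rosenthal compactum is Rosenthal; hence $E(X)$ is Rosenthal compact, i.e.\ $(G,X)$ is tame. Conversely, if some $p\in E(X)$ fails to be fragmented, then some $h_n\circ p\notin\calB_1(X)$, so $E^{h_n}$ is not contained in $\calB_1(X)$; by the BFT dichotomy $E^{h_n}$ — and hence, via a continuous surjection argument and the fact that a continuous image of a Fr\'echet space is Fr\'echet, the space $E(X)$ which maps onto $E^{h_n}$ — contains a copy of $\beta\N$ and is not Fr\'echet, so by Fact~\ref{BFT}.1 it is not a Rosenthal compactum, i.e.\ $(G,X)$ is not tame. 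Finally I would note the parenthetical equivalence ``Baire 1 $\Leftrightarrow$ fragmented $\Leftrightarrow$ point of continuity property'' for selfmaps of a compact metric $X$ is exactly Lemma~\ref{Baire-1}.2 and Corollary~\ref{c:fr=B1}.2, so no separate argument is needed there.
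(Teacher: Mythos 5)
The paper does not actually prove this statement---it is imported wholesale from \cite{GMU}---so there is no internal proof to compare against; the closest analogues are Fact~\ref{f:D-BFT} and Proposition~\ref{p:EisTame}, and your reconstruction follows essentially that standard route: fix a separating sequence $\{h_n\}\subset C(X)$, reduce fragmentedness of $p\colon X\to X$ to fragmentedness of the real-valued maps $h_n\circ p$ via Lemma~\ref{simple-fr}.3 (and uniform continuity of $h_n$ for the other implication), embed $E(X)$ into $\prod_n E^{h_n}$ with $E^{h_n}=\cls_p(h_nG)$, and apply the BFT machinery factor by factor. The direction ``every $p\in E(X)$ fragmented $\Rightarrow E(X)$ Rosenthal'' is sound modulo routine points: you should say explicitly that the dichotomy is applied to a countable pointwise-dense subset of the orbit $h_nG$ (which exists because $C(X)$ is norm separable), not to ``any sequence in the orbit''; the closure of Rosenthal compacta under countable products and closed subspaces is Lemma~\ref{l:prod}.

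The one genuine flaw is in the converse. To show that a non-fragmented $p$ forces $E(X)$ to be non-Rosenthal, you push Fr\'echetness forward along the continuous surjection $q_{h_n}\colon E(X)\to E^{h_n}$, citing ``a continuous image of a Fr\'echet space is Fr\'echet.'' That is not a valid general principle: quotient images of Fr\'echet--Urysohn spaces need in general only be sequential (Arens' space is the classical counterexample), so this step as justified does not go through. Two standard repairs are available. (i) Lift the copy of $\beta\N$ backwards instead of pushing Fr\'echetness forwards: pick $p_k\in E(X)$ above the points of the copy of $\N$ inside $E^{h_n}$; the canonical extension $\beta\N\to \cls\{p_k\}$ of $k\mapsto p_k$, composed with $q_{h_n}$, is the identity on $\N$ and hence on $\beta\N$, so the extension is an embedding, $E(X)$ contains $\beta\N$, and is therefore not Fr\'echet and not Rosenthal---this is exactly the argument behind Fact~\ref{f:D-BFT}. (ii) Better, avoid $\beta\N$ entirely in this direction: if $E(X)$ is Rosenthal then it is Fr\'echet by Fact~\ref{BFT}.1, so every $p\in E(X)=\cls(\breve G)$ is the pointwise limit of a \emph{sequence} $\breve g_k$ of continuous selfmaps of the compact metric space $X$, hence Baire~1 by Lemma~\ref{l:Baire1}.1 and fragmented by Lemma~\ref{Baire-1}; this is shorter and needs no dichotomy at all for the ``only if'' half.
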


This result suggests the following general definition.

\begin{defn} \label{d:TAME2}
 Let $X$ be a (not necessarily metrizable) compact $G$-space.
 We say that $X$ is
\emph{tame} if for every element $p \in E(X)$ the function $p: X
\to X$ is fragmented (that is if $E(X) \subset {\mathcal F}(X,X)$).
\end{defn}

We will see later that
this class is the same as the class
of all regular systems in the sense of K\"{o}hler \cite{Ko}. In
particular this gives an enveloping semigroup characterization of
regular systems.

\begin{lem} \label{l:HNS => Tame}
Every compact HNS dynamical $G$-system is tame.
\end{lem}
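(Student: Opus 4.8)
The plan is to reduce the statement to the enveloping-semigroup characterization of HNS systems already available in the excerpt, namely Fact \ref{t:HNS}, and then observe that membership in a fragmented \emph{family} trivially forces fragmentedness of each \emph{individual} member. So let $(G,X)$ be a compact HNS $G$-space, and let $X$ carry its unique compatible uniformity. By the equivalence (1) $\Leftrightarrow$ (3) of Fact \ref{t:HNS}, the family $E(X) = \{p: X \to X\}_{p \in E(X)}$ is a fragmented family in the sense of Definition \ref{d:fr-family}.1.

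Now fix an arbitrary $p \in E(X)$ and consider the singleton subfamily $\{p\} \subset E(X)$. By the definition of a fragmented family, for every nonempty $A \subset X$ and every entourage $\eps$ of $X$ there is an open set $O \subset X$ with $O \cap A \neq \emptyset$ such that $f(O \cap A)$ is $\eps$-small \emph{for every} $f \in E(X)$; in particular $p(O \cap A)$ is $\eps$-small. This is exactly the statement that $p: X \to X$ is a fragmented map, i.e. $p \in \mathcal{F}(X,X)$. Since $p \in E(X)$ was arbitrary, $E(X) \subset \mathcal{F}(X,X)$, which is precisely the condition in Definition \ref{d:TAME2}. Hence $(G,X)$ is tame.

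I do not expect a genuine obstacle here: the only point requiring a moment's care is that the target uniformity on $X$ used in Fact \ref{t:HNS} (to speak of the fragmented family $E(X)$) is the same one used in Definition \ref{d:TAME2} — namely the unique compatible uniformity of the compact space $X$ — so the two fragmentedness notions refer to identical $\eps$'s, and the passage from the family to a single member is literally a matter of forgetting the other members. (As a byproduct, combined with the remark that WAP systems satisfy $E(X) \subset C(X,X) \subset \mathcal{F}(X,X)$, this records the inclusions WAP $\subset$ HNS $\subset$ Tame at the level of arbitrary compact $G$-systems.)
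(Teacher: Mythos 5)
Your proof is correct and is essentially identical to the paper's: both invoke the equivalence (1) $\Leftrightarrow$ (3) of Fact \ref{t:HNS} to see that $E(X)$ is a fragmented family and then note that this in particular gives $E(X) \subset \mathcal{F}(X,X)$, i.e.\ tameness in the sense of Definition \ref{d:TAME2}. Your extra remarks about the uniformity and the singleton subfamily just make explicit what the paper leaves implicit.
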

\begin{proof}
$E(X)=\{p: X \to X\}_{p \in E(X)}$ is a fragmented family when $X$
is HNS by Fact \ref{t:HNS}. In particular we get $E(X) \subset
{\mathcal F}(X,X)$.
\end{proof}

Roughly speaking the difference between HNS and tame systems is
the difference between ``fragmented families" and ``families which
consist of fragmented maps" (see Facts \ref{t:HNS} and
\ref{GMU}).

\begin{lem} \label{l:tame-prop} For every $G$
the class of tame $G$-systems is closed under subsystems,
arbitrary products and factors.
\end{lem}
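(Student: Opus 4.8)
The plan is to verify each of the three closure properties directly from Definition \ref{d:TAME2}, using the standard functoriality of the enveloping semigroup construction together with the fragmentability lemmas already established in Section \ref{sec:fr}. First I would recall the three basic facts about enveloping semigroups under morphisms of $G$-systems: if $\pi: X \to Y$ is a $G$-factor map between compact $G$-systems, then there is an induced (continuous, surjective) semigroup homomorphism $\pi_*: E(X) \to E(Y)$ satisfying $\pi \circ p = \pi_*(p) \circ \pi$ for all $p \in E(X)$; if $X \subset Y$ is a closed $G$-subsystem, then restriction $p \mapsto p|_X$ gives a map $E(Y) \to E(X)$ which is onto; and for a product $X = \prod_{i} X_i$ of compact $G$-systems, $E(X)$ embeds in $\prod_i E(X_i)$ so that each $p \in E(X)$ acts coordinatewise as $p = (p_i)_i$ with $p_i \in E(X_i)$. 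These are all classical (see e.g. \cite{G4, GM-suc}) and may be assumed.

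Next I would treat the three cases in turn. \textbf{Factors:} suppose $X$ is tame and $\pi: X \to Y$ is a $G$-factor map. Given $q \in E(Y)$, pick $p \in E(X)$ with $\pi_*(p) = q$, so the square with $\alpha = \pi$, $\nu = \mathrm{id}_Y$ (as a map $Y \to Y$), $\phi = \pi \circ p : X \to Y$ (note $\phi = q \circ \pi$) commutes; since $p: X \to X$ is fragmented and $\pi: X \to Y$ is continuous, the composite $\pi \circ p : X \to Y$ is fragmented by Lemma \ref{simple-fr}.2, and then Lemma \ref{simple-fr}.\ref{l:factor} applied to $\alpha = \pi$, $f = \pi \circ p$, $f' = q$ (using $q \circ \pi = \pi \circ p$) gives that $q: Y \to Y$ is fragmented. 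Hence $Y$ is tame. \textbf{Subsystems:} suppose $X$ is tame and $A \subset X$ is a closed $G$-subsystem. Given $q \in E(A)$, choose $p \in E(X)$ with $p|_A = q$. Since $p: X \to X$ is fragmented, its restriction to the subset $A$ is fragmented onto $X$; but $q = p|_A$ has image in $A$, and fragmentability of $p|_A : A \to X$ with values landing in $A$ immediately yields fragmentability of $q: A \to A$ (the defining condition of Definition \ref{def:fr} for $q$ into $A$ is literally the condition for $p|_A$ into $X$ restricted to subsets of $A$, since the subspace uniformity on $A$ is induced from that of $X$). Hence $A$ is tame. \textbf{Products:} suppose each $X_i$ is tame and $X = \prod_i X_i$. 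Given $p \in E(X)$, write $p = (p_i)_i$ with $p_i \in E(X_i)$. Let $\mathrm{pr}_i : X \to X_i$ denote the projection; then $\mathrm{pr}_i \circ p = p_i \circ \mathrm{pr}_i$, and since $p_i$ is fragmented and $\mathrm{pr}_i$ is continuous, $\mathrm{pr}_i \circ p : X \to X_i$ is fragmented by Lemma \ref{simple-fr}.2. The family of projections $\{\mathrm{pr}_i : X \to X_i\}$ separates points of the compact space $X$, so Lemma \ref{simple-fr}.3 (with $Y = X$, the maps $f_i = \mathrm{pr}_i$, and $f_i \circ p = \mathrm{pr}_i \circ p \in \mathcal{F}(X, X_i)$) gives $p \in \mathcal{F}(X,X)$. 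Hence $X$ is tame.

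The only genuinely delicate point is making sure each invocation of Lemma \ref{simple-fr} is legitimate: in particular, in the product case one must check that $X_i$ is a \emph{compact} space (so that Lemma \ref{simple-fr}.3 applies) and that the projections separate points, both of which are automatic; and in the subsystem case one must be a little careful that ``fragmented as a map into $X$'' restricted to $A$ really does give ``fragmented as a map into $A$'' — but this is immediate since $A$ carries the subspace uniformity and Definition \ref{def:fr} is tested on subsets of the domain $A$, for which the witnessing open sets can be taken of the form $O \cap A$ with $O$ open in $X$. I do not expect any of these verifications to present a real obstacle; the lemma is essentially a bookkeeping exercise in combining the functoriality of $E(\cdot)$ with the permanence properties of fragmented maps from Section \ref{sec:fr}.
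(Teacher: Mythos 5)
Your proposal is correct and follows essentially the same route as the paper: subsystems via heredity of fragmentability, factors via lifting $q\in E(Y)$ to some $p\in E(X)$ and pushing fragmentability down through the commuting square (Lemma \ref{simple-fr}.4/.5), and products via Lemma \ref{simple-fr}.2 composed with the point-separating projections and Lemma \ref{simple-fr}.3. The only cosmetic difference is that you spell out the subsystem case (which the paper dismisses as trivial) and invoke item 5 of Lemma \ref{simple-fr} where the paper uses item 4 directly.
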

\begin{proof}
The case of subsystems is trivial because the fragmentability of
maps is a hereditary property. The cases of products and factors
both can be proved using Lemma \ref{simple-fr}.

\emph{For factors:} let $\a: X \to Y$ be a $G$-factor.
By \cite[Prop. 3.8]{Ellis}
there exists a (unique) continuous onto semigroup homomorphism
$Q: E(X) \to E(Y)$ such that $\a \circ p=Q(p) \circ \a$ for every $p \in E(X)$.
Since $Q$ is onto for every $p_Y \in E(Y)$ there exists
$p_X \in E(X)$ such that the following diagram commutes.

\begin{equation*}
\xymatrix {
X \ar[d]_{\a} \ar[r]^{p_X} & X \ar[d]^{\a} \\
Y \ar[r]^{p_Y} & Y }
\end{equation*}

Then $p_X \in {\mathcal F}(X,X)$ because $(G,X)$ is tame. By Lemma
\ref{simple-fr}.4 we obtain that $p_Y \in {\mathcal F}(Y,Y)$. This
shows that $(G,Y)$ is tame.

\sk

\emph{For products:} let $X:=\prod_{i \in I} X_i$ be a $G$-product of compact
tame $G$-spaces $X_i$ with canonical $G$-projections $\a_i: X \to
X_i$. For every $p \in E(X)$ and every index $i$ we have the
following commutative diagram

\begin{equation*}
\xymatrix {
X \ar[d]_{\a_i} \ar[r]^{p} & X \ar[d]^{\a_i} \\
X_i \ar[r]^{p_i} & X_i }
\end{equation*}
for some $p_i \in E(X_i)$.
By our assumption $p_i \in {\mathcal
F}(X_i,X_i)$ because $(G,X_i)$ is tame. Hence each $p_i \circ \a_i$
belongs to ${\mathcal F}(X,X)$ (Lemma \ref{simple-fr}.2). Then the
same is true for $\a_i \circ p$. The family of projections $\{\a_i\}_{i \in I}$
separates points of $X$. Now directly from Lemma \ref{simple-fr}.3
we conclude that $p \in {\mathcal F}(X,X)$.
\end{proof}

If $X$ is a tame $G$-space then $E(X)$ is also tame as a $G$-space.
For every $G$-space $X$ there exists a maximal tame
$G$-compactification (universal tame $G$-factor if $X$ is compact).

\begin{defn} \label{d:tameF} We say that
a continuous function $f: X \to \R$ on a
$G$-space $X$ is \emph{tame}
(notation $f \in Tame(X)$) if it comes from a tame $G$-system.
\end{defn}

Since the class of tame systems is closed under products and subsystems the collection $Tame(X)$ is a
$G$-subalgebra of $RUC(X)$ for every $G$-space $X$ (use, for example, the general approach as in \cite[Prop. 2.9]{GM}).

\begin{prop} \label{p:tame-f}
Let $X$ be a compact $G$-space, $f \in C(X)$ and
$E^f={\cls}_p(fG)$ is the pointwise closure of the orbit of $f$ in
$\R^X$.
The following conditions are equivalent:
\begin{enumerate}
\item
The function $f: X \to \R$ is tame.
\item
The cyclic $G$-space $X_f$ is tame.
\item
${\cls}_p(fG) \subset {\mathcal F}(X)$ (i.e. the orbit $fG$ is a
Rosenthal family for $X$).
\item For every countable subset $S \subset G$, $fS$ is a Rosenthal family for $X$.
\end{enumerate}
\end{prop}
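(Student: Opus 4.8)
The plan is to establish the cycle $(2)\Rightarrow(1)\Rightarrow(3)\Rightarrow(2)$ together with the equivalence $(3)\Leftrightarrow(4)$. Two elementary facts will be used repeatedly: the orbit $fG$ is norm bounded (each translation $g\mapsto fg$ preserves the sup-norm), and if $\nu\colon X\to Y$ is a $G$-factor (automatically onto, as $X$ is compact) with $f=f_0\circ\nu$ for some $f_0\in C(Y)$, then $G$-equivariance of $\nu$ gives $fg=(f_0g)\circ\nu$ for all $g\in G$, hence $fG=(f_0G)\circ\nu$; by Lemma \ref{l:genQuot}.3(a), $fG$ is a Rosenthal family for $X$ if and only if $f_0G$ is one for $Y$. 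With these in hand, $(2)\Rightarrow(1)$ is immediate: by construction $f$ comes from the cyclic system $X_f$, so if $X_f$ is tame then $f\in Tame(X)$ by Definition \ref{d:tameF}.

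For $(1)\Rightarrow(3)$ I would start from a $G$-compactification $\nu\colon X\to Y$ with $Y$ tame and $f=f_0\circ\nu$, $f_0\in C(Y)$. Since $Y$ is compact, $f_0$ is uniformly continuous, and every $q\in E(Y)$ is fragmented as a self-map of $Y$ because $Y$ is tame; applying Lemma \ref{simple-fr}.4 (with $\mathrm{id}_Y$ as the left vertical map and $f_0$ as the uniformly continuous map) shows that $f_0\circ q\colon Y\to\R$ is fragmented. As $E^{f_0}=\cls_p(f_0G)=\{f_0\circ q\colon q\in E(Y)\}$, this gives $\cls_p(f_0G)\subset{\mathcal F}(Y)$, i.e. $f_0G$ is a Rosenthal family for $Y$, and then the observation above yields $(3)$.

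For $(3)\Rightarrow(2)$, which I expect to be the crux, I would apply the same transfer to the canonical factor $f_{\sharp}\colon X\to X_f$: from $(3)$ and Lemma \ref{l:genQuot}.3(a), $f_0G$ is a Rosenthal family for $X_f$, so $\cls_p(f_0G)\subset{\mathcal F}(X_f)$. Now fix $p\in E(X_f)$; for each $g\in G$ the function $(f_0g)\circ p$ equals $y\mapsto f_0\bigl((gp)\,y\bigr)$, which lies in $\cls_p(f_0G)$ since $gp\in E(X_f)$, hence is fragmented. Because $\{f_0g\colon g\in G\}$ is a point-separating family of continuous real-valued functions on the compact space $X_f$, Lemma \ref{simple-fr}.3 forces $p\colon X_f\to X_f$ to be fragmented; as $p$ was arbitrary, $X_f$ is tame.

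Finally, $(3)\Rightarrow(4)$ is trivial, since a norm-bounded subfamily of a Rosenthal family is again a Rosenthal family. For $(4)\Rightarrow(3)$, every countable subfamily $T$ of $fG$ is contained in $fS$ for a countable $S\subset G$ (choose a $G$-preimage of each member of $T$), so $T$ is a Rosenthal family for $X$; Fact \ref{t:Talagr} (equivalence of its conditions (3) and (5)) then gives that $fG$ is a Rosenthal family for $X$. The one genuinely substantive step is $(3)\Rightarrow(2)$: the work there is to see that the Rosenthal-family property passes through the non-injective quotient $f_{\sharp}$ and that fragmentability of an arbitrary $p\in E(X_f)$ as a self-map of $X_f$ is detected by the scalar functions $(f_0g)\circ p$ through the point-separating orbit $f_0G$.
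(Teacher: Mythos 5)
Your proof is correct and follows essentially the same route as the paper's: the cycle $(2)\Rightarrow(1)\Rightarrow(3)\Rightarrow(2)$ via transfer of the Rosenthal-family property along $G$-quotients (Lemma \ref{l:genQuot}.3), the identity $\cls_p(f_0G)=\{f_0\circ p\}_{p\in E}$, and point-separation plus Lemma \ref{simple-fr}.3 to recover fragmentability of $p\colon X_f\to X_f$; the equivalence $(3)\Leftrightarrow(4)$ via Fact \ref{t:Talagr} is likewise the paper's argument, which you merely spell out in more detail.
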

\begin{proof}
The implication (2) $\Rightarrow (1)$ is obvious
because $f$ comes from the cyclic $G$-space $X_f$.

(1) $\Rightarrow (3)$: There exist: a tame compact $G$-system
$X_0$, a $G$-quotient map $q: X \to X_0$ and a function $f_0 \in
C(X_0)$ such that $f=f_0 \circ q$. By Lemma \ref{l:genQuot}.3 it
suffices to show that $f_0G$ is a Rosenthal family for $X_0$.
Clearly $f_0G$ is norm bounded in $C(X_0)$. We have to show that
the corresponding pointwise closure of $f_0G =\{f_0 \circ g: g \in
G\} \subset \R^{X_0}$ is a subset of ${\mathcal F}(X_0)$. Observe
that
$\cls_p(f_0 G)=E^{f_0}:=\{f_0 \circ p\}_{p \in E(X_0)}$.
Our $G$-system $X_0$ is tame means that every $p: X_0 \to X_0$
($p \in E(X_0)$)
is fragmented. Thus every $f_0 \circ p$ is also fragmented
(because $f_0$ is uniformly continuous). So, $f_0 G$ is a
Rosenthal family for $X_0$.

(3) $\Rightarrow (2)$: If $fG$ is a Rosenthal family for $X$ then
$E^f=\cls_p(fG) \subset {\mathcal F}(X)$. This means that $f \circ p \in {\mathcal
F}(X)$ for every $p \in E$. Consider the cyclic $G$-system $X_f$
and the natural $G$-quotient ${\a}_f: X \to X_f$.
By elementary properties of cyclic $G$-spaces (Section \ref{s:Dyn})
there exists a continuous function $f_0: X_f \to \R$ such that
$f=f_0 \circ {\a}_f$ and $fG=(f_0G) \circ {\a}_f$.
By Lemma \ref{l:genQuot} we obtain that $f_0G$ is a Rosenthal family for
$X_f$. Therefore, $f_0 \circ p \in {\mathcal F}(X_f)$ for every $p \in
E(X_f)$. Then also $f_0 \circ gp=f_0g \circ p \in {\mathcal F}(X_f)$
for every $p \in E(X_f)$ and $g \in G$. Now since $f_0G$ separates
points of $X_f$ (Section \ref{s:cycl}), by Lemma
\ref{simple-fr}.3 we conclude that $p: X_f \to X_f$ is a
fragmented map for every $p \in E(X_f)$. This means that $(G,X_f)$
is tame.

(3) $\Leftrightarrow$ (4): Follows from Fact \ref{t:Talagr}.
\end{proof}

\begin{remark} By Rosenthal's dichotomy every bounded sequence
in a Banach space either has a weak Cauchy subsequence or a
subsequence equivalent to the unit vector basis of $l_1$ (the
so-called $l_1$-\emph{sequence}). Recall the definition of
regularity of dynamical systems
originally introduced by
K\"{o}hler \cite{Ko} for cascades in terms of \emph{independent}
sequences.
A compact $G$-space $X$ is \emph{regular} iff for every
$f \in C(X)$ the orbit $fG$ does not contain an $l_1$-sequence (in
other words the second alternative is ruled out in Rosenthal's
dichotomy). By Fact \ref{t:Talagr} it is equivalent to requiring
that $fG$ be a Rosenthal family for $X$ for every $f \in C(X)$. In
fact the notions of regularity and tameness coincide (see
\cite{Gl-tame} (for metrizable systems) and Corollary \ref{c:tame-char} below).
\end{remark}

\begin{cor} \label{c:tame-char}
Let $X$ be a compact (not necessarily metrizable)
$G$-space. The
following conditions are equivalent:
\ben
\item
$(G,X)$ is a tame dynamical system
(that is, $E(X) \subset {\mathcal F}(X,X)$).
\item
$C(X)=Tame(X)$.
\item
$(G,X)$ is regular (in the sense of K\"{o}hler). \een
\end{cor}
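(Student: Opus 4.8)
The plan is to show the cycle of implications (1) $\Rightarrow$ (2) $\Rightarrow$ (3) $\Rightarrow$ (1), using Proposition \ref{p:tame-f} as the main engine. The key observation is that all three conditions are, by that proposition, statements about orbits $fG$ of functions $f \in C(X)$, and the bridge between them is the equivalence, established in Proposition \ref{p:tame-f}, of tameness of $f$ with the condition that $fG$ is a Rosenthal family for $X$.

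For (1) $\Rightarrow$ (2): assume $E(X) \subset {\mathcal F}(X,X)$. Fix an arbitrary $f \in C(X)$; I must show $f \in Tame(X)$. Since every $p \in E(X)$ is a fragmented map $X \to X$ and $f$ is (uniformly) continuous, each composition $f \circ p$ is fragmented by Lemma \ref{simple-fr}.2, so $\cls_p(fG) = E^f = \{f \circ p : p \in E(X)\} \subset {\mathcal F}(X)$. By the implication (3) $\Rightarrow$ (1) of Proposition \ref{p:tame-f}, $f$ is tame, hence $C(X) = Tame(X)$.

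For (2) $\Rightarrow$ (3): assume $C(X) = Tame(X)$. Given $f \in C(X)$, tameness of $f$ yields, via (1) $\Rightarrow$ (3) of Proposition \ref{p:tame-f}, that $fG$ is a Rosenthal family for $X$, i.e. $\cls_p(fG) \subset {\mathcal F}(X)$. By Fact \ref{t:Talagr} (equivalence of (1) and (3) there), $fG$ contains no subsequence equivalent to the unit basis of $l_1$; that is, $fG$ contains no $l_1$-sequence. Since $f$ was arbitrary, $(G,X)$ is regular in the sense of K\"ohler. For (3) $\Rightarrow$ (1): assume $(G,X)$ is regular. Then for every $f \in C(X)$ the orbit $fG$ has no $l_1$-sequence, so by Fact \ref{t:Talagr} again, $fG$ is a Rosenthal family for $X$, and by (3) $\Rightarrow$ (2) of Proposition \ref{p:tame-f} the cyclic $G$-system $X_f$ is tame. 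It remains to deduce $E(X) \subset {\mathcal F}(X,X)$ from the tameness of all the cyclic factors $X_f$. This is the step I expect to be the main (though mild) obstacle: I would argue that $X$ embeds $G$-equivariantly into the product $\prod_{f \in C(X)} X_f$ — because the maps $f_{\sharp}: X \to X_f$ together separate points of $X$, as each $f_0 \in C(X_f)$ with $f = f_0 \circ f_{\sharp}$ recovers $f$ — and then invoke Lemma \ref{l:tame-prop}: the product of tame $G$-systems is tame, and a $G$-subsystem of a tame system is tame. Hence $X$ is tame, i.e. $E(X) \subset {\mathcal F}(X,X)$, closing the cycle.

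The only subtlety is verifying that the diagonal map $X \to \prod_f X_f$ is an embedding onto a closed $G$-subspace; closedness follows from compactness of $X$, and injectivity from the fact that $C(X)$ separates points and each $f$ factors through $X_f$. Everything else is a direct assembly of Proposition \ref{p:tame-f}, Fact \ref{t:Talagr}, and Lemma \ref{l:tame-prop}, with no new computation required.
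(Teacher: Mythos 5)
Your proof is correct and follows essentially the same route as the paper: (2) $\Leftrightarrow$ (3) comes from Proposition \ref{p:tame-f} together with Fact \ref{t:Talagr}, and your explicit embedding of $X$ into $\prod_f X_f$ plus Lemma \ref{l:tame-prop} is exactly what underlies the paper's one-line appeal to the universal tame $G$-factor for (1) $\Leftrightarrow$ (2). One small citation slip: in (1) $\Rightarrow$ (2) the fragmentedness of $f\circ p$ for fragmented $p\in E(X)$ and continuous $f$ is a composition in the order (uniformly continuous) $\circ$ (fragmented), which is justified by the uniform continuity of $f$ on the compact space $X$ (or by Lemma \ref{simple-fr}.4 with $\a=\mathrm{id}$), not by Lemma \ref{simple-fr}.2, which treats the opposite order (fragmented) $\circ$ (continuous).
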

\begin{proof}
(2) $\Leftrightarrow$ (3):
Use Proposition \ref{p:tame-f} and Fact \ref{t:Talagr}.

(1) $\Leftrightarrow$ (2): Observe that
by standard arguments (see for example \cite[Prop. 2.9]{GM})
the universal tame $G$-factor of $X$ is naturally isomorphic to $X$ iff $C(X)=Tame(X)$.
\end{proof}

Let $X$ be a compact $G$-space. Then WAP functions on $X$ come
from reflexively representable factors. Similarly, Asplund
functions on a compact $G$-system $X$ are exactly functions which
come from Asplund representable (that is, RN) factors. Every RN
(being HNS) is tame in virtue of Lemma \ref{l:HNS => Tame}. Hence
$$WAP(X) \subset Asp(X) \subset Tame(X)$$ Another way to see these
inclusions for metrizable $X$ is the following proposition
(see also Lemma \ref{l:4} and Section \ref{s:BanHier}).

\begin{prop} \label{p:3f-on}
Let $X$ be a compact metric $G$-space and $f \in C(X)$. Then
\ben
\item
$\cls_p(fG) \subset C(X)$ if and only if $f \in WAP(X)$.
\item
$\cls_p(fG)$ is a (compact) metrizable subspace in $\R^X$ iff $f
\in Asp(X)$ iff $fG$ is a fragmented family of functions on $X$.
\item
$\cls_p(fG) \subset \calB_1(X)={\mathcal F}(X)$ if and only if $f \in
Tame(X)$. \een
\end{prop}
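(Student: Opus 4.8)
The plan is to identify, for each of the three Banach-space classes occurring in the hierarchy (reflexive, Asplund, Rosenthal), the function class it produces as a $G$-subalgebra of $RUC(X)$, and then match the ``smallness'' of the orbit $fG$ inside $\R^X$ (as recorded in Lemma \ref{l:4}) against membership of $f$ in that subalgebra. Since $X$ is compact metric, so is the cyclic $G$-system $X_f$, and the pointwise closure $\cls_p(fG)$ is identified with $E^f=\{f_0\circ p\}_{p\in E(X_f)}$ via the natural homeomorphism coming from $f_{\sharp}\colon X\to X_f$ (Section \ref{s:env} and Lemma \ref{l:genQuot}). So the strategy throughout is: transfer the condition on $\cls_p(fG)$ to a property of the cyclic system $X_f$ and its enveloping semigroup, and then invoke the corresponding representation theorem.

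For (3): the equality $\calB_1(X)={\mathcal F}(X)$ for compact metric $X$ is Corollary \ref{c:fr=B1}.2, so the two formulations of the hypothesis agree. The condition $\cls_p(fG)\subset{\mathcal F}(X)$ says precisely that the orbit $fG$ is a Rosenthal family for $X$ in the sense of Definition \ref{d:Ros-F}; by Proposition \ref{p:tame-f} (equivalence of (1) and (3) there) this holds iff $f\in Tame(X)$. This is essentially a restatement and requires no further work.

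For (1): if $\cls_p(fG)\subset C(X)$ then in particular $E^f\subset C(X)$, and chasing this through the cyclic system as in the proof of Proposition \ref{p:tame-f} shows every $p\colon X_f\to X_f$ is continuous (using that $f_0G$ separates points of $X_f$), i.e. $X_f$ is WAP; conversely if $f\in WAP(X)$ then $f$ comes from a WAP factor $X_0$, where every element of $E(X_0)$ is continuous, hence $\cls_p(f_0G)\subset C(X_0)$, and pulling back via Lemma \ref{l:genQuot} gives $\cls_p(fG)\subset C(X)$. For (2): by Lemma \ref{l:4} the conditions ``$\cls_p(fG)$ is compact metrizable in $\R^X$'' and ``$fG$ is a fragmented family on $X$'' are equivalent, so it remains to tie either of them to $f\in Asp(X)$. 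Again transfer to the cyclic system: $fG$ is a fragmented family iff $f_0G$ is a fragmented family on $X_f$ (Lemma \ref{l:fr-f-cls}.2, using the quotient $f_{\sharp}$), and by Fact \ref{t:HNS} together with the fact that $f_0G$ separates points of $X_f$ this is equivalent to $\breve G=\{\breve g\colon X_f\to X_f\}$ being a fragmented family, i.e. $X_f$ is HNS, i.e. (Fact \ref{t:AspRep}) $X_f$ is Asplund representable; and $f$ comes from the Asplund system $X_f$ precisely when $f\in Asp(X)$, while conversely any Asplund factor from which $f$ comes is HNS, forcing $f_0G$ and hence $fG$ to be fragmented.

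The main obstacle I anticipate is the back-and-forth in (2) between the fragmented-family condition on $fG$ (a statement about a single orbit) and the HNS property of the cyclic system (a statement about the whole family $\breve G$ acting on $X_f$): one must use that $f_0G$ separates points of $X_f$ so that ``$f_0G$ fragmented'' upgrades, via Lemma \ref{simple-fr}.3 applied to the maps $\{x\mapsto f_0(gx)\}$, to fragmentedness of the family $\{p\colon X_f\to X_f\}$ and thence (or directly, since $E(X_f)=\cls_p\breve G$) to fragmentedness of $\breve G$ itself — and conversely that HNS of $X_f$ makes every orbit a fragmented family by restriction. Everything else is a routine combination of the cyclic-system formalism of Section \ref{s:cycl}, Lemma \ref{l:genQuot}, and the already-cited representation theorems (the WAP fact, Fact \ref{t:AspRep}, Fact \ref{t:HNS}).
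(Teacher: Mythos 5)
Your argument is correct, but for parts (1) and (2) it takes a longer, more self-contained route than the paper; part (3) is handled identically (Corollary \ref{c:fr=B1}.2 plus Proposition \ref{p:tame-f}). For part (1) the paper simply invokes Grothendieck's theorem --- a bounded subset of $C(X)$ is relatively weakly compact iff it is relatively compact in the pointwise topology with closure inside $C(X)$ --- which is verbatim the equivalence of $\cls_p(fG)\subset C(X)$ with $f\in WAP(X)$. Your detour through the cyclic system $X_f$ and the Ellis--Nerurkar characterization works, but note that the converse direction leans on the background fact that WAP functions come from WAP factors, which is itself usually proved via Grothendieck, so you are not really avoiding that theorem, only relocating it. For part (2) the paper cites \cite{GM} for the equivalence ``$f\in Asp(X)$ iff $fG$ is a fragmented family'' and then applies Lemma \ref{l:4} for the metrizability statement; you instead re-derive the cited equivalence by transferring to $X_f$ and matching against Facts \ref{t:HNS} and \ref{t:AspRep}. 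That re-derivation is sound, but the pivotal upgrade from ``$f_0G$ is a fragmented family on $X_f$'' to ``$\breve{G}$ is a fragmented family on $X_f$'' needs a family version of Lemma \ref{simple-fr}.3 and of the subbase reduction of Remarks \ref{r:fr}.2, since the lemma as stated only yields fragmentedness of each individual map $\breve{g}$: one observes that the subbasic entourages of the unique uniformity of $X_f$ are pulled back from $\R$ by the point-separating family $f_0G$, and that $\{f_0hg\}_{g\in G}=f_0G$ for each $h$, so the simultaneity required for the family survives. You correctly identified this as the crux; spelling out that family analogue would make the proof complete. What your approach buys is independence from the external reference for the Asplund characterization; what it costs is length and the need for these family-level variants of the fragmentability lemmas.
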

\begin{proof} (1)
Use Grothendieck's theorem: for a compact space $X$,
a bounded subset $A \subset C(X)$ is relatively weakly compact
in $C(X)$ iff it is pointwise relatively compact.

(2) By \cite{GM} we know that $f \in Asp(X)$ iff $fG$ is a
fragmented family of functions on $X$. At the same time Lemma
\ref{l:4} shows that $\cls_p(fG)$ is a (compact) metrizable
subspace in $\R^X$ iff $fG$ is a fragmented family of functions on
$X$.

(3) Apply Proposition \ref{p:tame-f}.
\end{proof}

\begin{remarks} \label{r:old}
\ben
\item
Note that the equivalence ``$f \in Asp(X) \Leftrightarrow$
$\cls_p(fG)$ is metrizable in $\R^X$" is a new characterization of
Asplund functions on metric compact $G$-systems.
\item
For a concrete example of a metric tame system which
is not RN see \cite[Example 14.10]{GM}.
\een
\end{remarks}

Further results concerning tame systems can be found in \cite{Ko},
\cite{Gl-tame}, \cite{G3}, \cite{Huang}, \cite{KL}.

\section{The main results}

\subsection{Banach representations of tame systems}

Let us say that a compact $G$-space $X$ is \emph{Rosenthal
representable} if it admits a faithful representation on a
Rosenthal Banach space
(see Definition \ref{Ros-Banach} above).
Our main result (Theorem \ref{main}) asserts that
a compact metric $G$-space is tame iff
it is Rosenthal representable.
%

\begin{thm} \label{t:1dir}
Every Rosenthal representable (not necessarily metrizable) compact
$G$-space is tame.
In particular, the dynamical system $(Iso(V),B^*)$,
where $B^*$ is the weak$^*$ compact unit ball of $V^*$,
is tame for every Rosenthal Banach space $V$.
\end{thm}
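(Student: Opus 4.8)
The plan is to reduce the statement to a verification about the unit ball $B_V$ acting as a family of functions on $B^* := B_{V^*}$, and then invoke the enveloping-semigroup characterization of tameness together with Lemma \ref{simple-fr}.3. First I would recall that, by Lemma \ref{l:crit} (equivalently Proposition \ref{p:crit}.(4)), $V$ being a Rosenthal Banach space means precisely that $B_V$ is a sub-fragmented family of functions on $(B^*, w^*)$, and hence by Proposition \ref{p:aft} that $B_V$ is a \emph{Rosenthal family} for the compact space $B^*$; so $\cls_p(B_V) \subset {\mathcal F}(B^*)$. The key observation is that every $v \in V$, viewed as the weak$^*$-continuous linear functional $\hat v : B^* \to \R$, $\hat v(\varphi) = \varphi(v)$, lies in $C(B^*)$, and the family $\{\hat v : v \in B_V\}$ separates the points of $B^*$.

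Next I would unwind the dynamics. For a faithful representation $(h, \alpha)$ of $(G,X)$ on $V$, the composite $\alpha : X \to B^*$ is a $G$-embedding intertwining the $G$-action on $X$ with the dual action on $B^*$, so by Lemma \ref{l:tame-prop} (tameness passes to subsystems) it suffices to prove that $(\Iso(V), B^*)$ itself is tame. Fix $p \in E(B^*) = E(\Iso(V), B^*)$; I must show $p : B^* \to B^*$ is fragmented. For each fixed $v \in B_V$ consider the function $\hat v \circ p : B^* \to \R$. Because $p$ is a pointwise limit of a net of maps $\breve{g}$ coming from isometries $h(g)$, and each $\hat v \circ \breve{g} (\varphi) = \varphi(h(g)v) = \widehat{h(g)v}(\varphi)$ with $h(g)v \in B_V$, the function $\hat v \circ p$ is a pointwise limit of functions from $B_V$ (regarded inside $C(B^*)$), hence $\hat v \circ p \in \cls_p(B_V) \subset {\mathcal F}(B^*)$. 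Thus for every $v \in B_V$ the composition $\hat v \circ p$ is a fragmented real-valued map on $B^*$.

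Finally, I would apply Lemma \ref{simple-fr}.3 with $X := B^*$, $Y := B^*$, $p$ the map in question, and the separating system of continuous maps $\{\hat v : B^* \to \R\}_{v \in B_V}$: since each $\hat v \circ p$ is fragmented and the $\hat v$ separate points of the compact space $B^*$, we conclude $p \in {\mathcal F}(B^*, B^*)$. As $p \in E(B^*)$ was arbitrary, $(\Iso(V), B^*)$ is tame (Definition \ref{d:TAME2}), and therefore so is its subsystem $(G,X)$.

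The main obstacle is the middle step — checking that $\hat v \circ p \in \cls_p(B_V)$. One must be careful that $p$ is obtained as a \emph{net} limit, not a sequential limit, of translations $\breve g$; but since $\cls_p(B_V)$ is the full pointwise closure of $B_V$ in $\R^{B^*}$ and is compact, any net limit of elements $\widehat{h(g)v} \in B_V$ already lies in it, so no sequential compactness is needed here (it was needed, via Fact \ref{t:Talagr}, only to know $\cls_p(B_V) \subset {\mathcal F}(B^*)$ in the first place). A secondary point to get right is that $\alpha$ need only be a bounded weak$^*$-continuous $G$-map into $V^*$; rescaling lands its image in $B^*$, and the argument above is stated for the whole ball $B^*$, so faithfulness of $\alpha$ plus Lemma \ref{l:tame-prop} closes the general case.
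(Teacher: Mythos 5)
Your proof is correct and its skeleton coincides with the paper's: both arguments reduce to the system $(\Iso(V),B^*)$, both verify that for each $v\in V$ and each $p$ in the enveloping semigroup the scalar function $\hat v\circ p$ is fragmented on $B^*$, and both conclude via Lemma \ref{simple-fr}.3 applied to the point-separating family $\{\hat v\}_{v\in V}$. The one place where you genuinely diverge is the middle step. The paper identifies $E(G,B^*)$ with $E(G,V^*)$, observes that each $p$ is then a \emph{linear} map of norm $\le 1$ on $V^*$, so that $v\circ p$ is literally an element of $V^{**}$, and quotes the Saab--Saab theorem (Fact \ref{t:SS}, in the form of Proposition \ref{p:crit}.(2)) to get fragmentability of its restriction to $B^*$. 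You instead keep $p$ as a selfmap of $B^*$, note that $\hat v\circ p$ is a pointwise net limit of the translates $\widehat{h(g)v}\in B_V$, hence lies in $\cls_p(B_V)\subset{\mathcal F}(B^*)$ because $B_V$ is a Rosenthal family for $B^*$ (Lemma \ref{l:crit}, resting on Rosenthal's theorem and Fact \ref{t:Talagr}). The two mechanisms are equivalent in substance --- by Goldstine's theorem $\cls_p(B_V)$ inside $\R^{B^*}$ is the unit ball of $V^{**}$, so ``every element of $\cls_p(B_V)$ is fragmented'' is exactly the Saab--Saab statement for the ball --- but your route stays entirely inside the paper's Rosenthal-family formalism and avoids both the linearity observation and the identification of the two enveloping semigroups, while the paper's route makes the connection with the double-dual characterization of Rosenthal spaces explicit. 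Your attention to the net-versus-sequence issue, and to rescaling $\alpha$ into $B^*$ before passing to the subsystem, is well placed and correct.
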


\begin{proof}
 It is enough to show that for every Rosenthal Banach space $V$ the
associated dynamical system $(Iso(V),B^*)$ is tame.
We have to show that $p: B^* \to B^*$ is a fragmented map for every $p \in E(G,B^*)$.

\sk

\nt \textbf{Claim:} For every $p \in E(G,B^*)$ there exists a (uniquely defined)
linear operator $\overline{p}: V^* \to V^*$ with the norm $\leq 1$ such that $p=\overline{p}|_{B^*}$.
\begin{proof}
Consider the enveloping semigroup $E(G,V^*)$ of the separately continuous
action $G \times V^* \to V^*$, where $V^*$ carries the weak$^*$ topology.
That is, as in the case of compact $G$-spaces (Section \ref{s:env}) $E(G,V^*)$
is the pointwise closure $E(G,V^*)$ of the set of all
$g$-translations $\breve{G}=\{\breve{g}: V^* \to V^* \}_{g \in G}$.
Then $E(G,V^*)$ is a compact right topological semigroup. The compactness follows from the fact that
the $G$-orbits in $V^*$ are relatively weak$^*$ compact. Furthermore, every $t \in E(G,V^*)$
(as a pointwise limit of linear isometries) is a linear operator $V^* \to V^*$ with  norm $\leq 1$.
Consider the natural continuous restriction  $\Psi: E(G,V^*) \to E(G,B^*), t \mapsto t|_{B^*}$.
Since $E(G,V^*)$ is compact it easily follows
that $\Psi$ is onto. In fact $\Psi$ is even injective (hence, a homeomorphism) because every linear operator
$V^* \to V^*$  is defined uniquely by its restriction to $B^*$.
\end{proof}

Let $p \in E(G,B^*)$. According to the claim $p=\overline{p}|_{B^*}$,
where $\overline{p}$ is a linear operator $\overline{p}: V^* \to V^*$ with norm $\leq 1$.
Then, for every vector $f \in V$, the composition $f \circ \overline{p}: V^* \to \R$
is a linear bounded (hence norm continuous) functional on
$V^*$. That is, $f \circ \overline{p} \in V^{**}$ belongs to the second dual.
By the reformulation of a theorem of E. Saab and P. Saab
(see Fact \ref{t:SS} and Remark \ref{r:WRN}.1) mentioned above,
the corresponding restriction $(f \circ \overline{p}) |_{B^*}: B^* \to \R$ is a fragmented function.
Next note that $V$ separates points of $B^*$. Since
$f \circ p=(f \circ \overline{p})|_{B^*}$ is fragmented for every $f \in V$, we can apply Lemma
\ref{simple-fr}.3. It follows that $p: {B^*} \to {B^*}$ is fragmented, completing the proof of the theorem.
\end{proof}

\begin{lem} \label{l:system}
Let $X$ be a compact metric $G$-space.
The following conditions are equivalent:
\begin{enumerate}
\item
$(G,X)$ is representable on a separable Rosenthal Banach
space.
\item
$(G,X)$ admits countably many representations
on separable Rosenthal Banach spaces
which separate points of $X$.
\end{enumerate}
\end{lem}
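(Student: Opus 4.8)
The plan is to prove the equivalence of the two conditions in Lemma~\ref{l:system} by a standard ``assembling'' argument for Banach representations. The implication $(1) \Rightarrow (2)$ is trivial: a single faithful representation on a separable Rosenthal space is already a (one-element, hence countable) family of representations on separable Rosenthal spaces whose associated maps separate points of $X$ (faithfulness means $\alpha$ is an embedding, so in particular it separates points). The substance is in $(2) \Rightarrow (1)$.

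For $(2) \Rightarrow (1)$, suppose we are given representations $(h_n, \alpha_n)$ of $(G,X)$ on separable Rosenthal Banach spaces $V_n$, $n \in \N$, such that the maps $\alpha_n : X \to V_n^*$ jointly separate the points of $X$. First I would rescale: since each $\alpha_n$ is bounded, say $\|\alpha_n(x)\| \le M_n$ for all $x$, I replace $V_n$ by $V_n$ with the equivalent norm scaled so that $M_n \le 2^{-n}$ (rescaling a norm does not affect being Rosenthal, nor separability, nor the isometry group up to the obvious identification, and $h_n$ still lands in $\Iso$ of the rescaled space). Then form $V := \left( \bigoplus_n V_n \right)_{c_0}$, the $c_0$-sum of the $V_n$'s. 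This $V$ is separable because each $V_n$ is and the sum is a $c_0$-sum. The key point is that $V$ is again a Rosenthal space: a $c_0$-sum of Rosenthal spaces contains no copy of $\ell_1$ --- indeed if it did, by Rosenthal's theorem (Fact~\ref{t:Ros0}) there would be a bounded sequence with no weak-Cauchy subsequence, but one checks using a gliding-hump / diagonal argument together with the fact that each $V_n$ is Rosenthal that every bounded sequence in the $c_0$-sum has a weak-Cauchy subsequence; alternatively, $V^* = \left( \bigoplus_n V_n^* \right)_{\ell_1}$ and an $\ell_1$-sum of duals of Rosenthal spaces is again (the dual of a Rosenthal space) by the classical permanence properties, so $V$ has the hereditary Rosenthal property. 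I would cite the permanence of the class of Rosenthal Banach spaces under $c_0$-sums (this is well known; it follows from Rosenthal's dichotomy by the diagonal argument just sketched).

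Next I would define the representation $(h, \alpha)$ of $(G,X)$ on $V$. Put $h(g) := \bigoplus_n h_n(g)$, acting coordinatewise; since each $h_n(g)$ is an isometry of $V_n$, the coordinatewise map is an isometry of the $c_0$-sum, and $g \mapsto h(g)$ is a continuous co-homomorphism into $\Iso(V)$ (continuity in the pointwise topology follows coordinatewise, using that for $v = (v_n) \in V$ the tail $\sum_{n > N}$ is uniformly small). The dual space is $V^* = \left( \bigoplus_n V_n^* \right)_{\ell_1}$ with the dual action acting coordinatewise, and I define $\alpha : X \to V^*$ by $\alpha(x) := (\alpha_n(x))_n$; this lands in the $\ell_1$-sum because $\|\alpha_n(x)\| \le 2^{-n}$, so $\sum_n \|\alpha_n(x)\| \le 1$, hence $\alpha$ is bounded. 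It is weak$^*$-continuous: pairing $\alpha(x)$ against $v = (v_n) \in V$ gives $\sum_n \langle \alpha_n(x), v_n \rangle$, which is a uniform limit (by the $2^{-n}$ bound and $\|v_n\| \le \|v\|$) of the partial sums $\sum_{n \le N} \langle \alpha_n(x), v_n\rangle$, each of which is continuous in $x$ by the weak$^*$-continuity of $\alpha_n$; so the sum is continuous in $x$. Equivariance $\alpha(gx) = g\alpha(x)$ holds coordinatewise since each $\alpha_n$ is a $G$-map. Finally $\alpha$ is injective — hence, $X$ being compact and $\alpha$ continuous into a Hausdorff space, a topological embedding — because the $\alpha_n$ jointly separate points of $X$. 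Thus $(h,\alpha)$ is a faithful representation of $(G,X)$ on the separable Rosenthal Banach space $V$, which is $(1)$.

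The main obstacle, and the only nonroutine point, is the verification that the $c_0$-sum $V = \left(\bigoplus_n V_n\right)_{c_0}$ of separable Rosenthal spaces is again Rosenthal (no copy of $\ell_1$). I would handle this by the diagonal argument: given a bounded sequence $(w^{(k)})_k$ in $V$ with $w^{(k)} = (w^{(k)}_n)_n$, first extract (diagonally over $n$, using that each $V_n$ is Rosenthal so $(w^{(k)}_n)_k$ has a weak-Cauchy subsequence) a subsequence along which each coordinate sequence is weak-Cauchy; then, using a gliding-hump argument to control the tails $\|\sum_{n > N} w^{(k)}_n e_n\|$ uniformly, conclude the subsequence is weak-Cauchy in $V$. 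Then Fact~\ref{t:Ros0} gives that $V$ is Rosenthal. (If one prefers, this permanence property may simply be quoted as a known fact about Rosenthal spaces.) Everything else is bookkeeping about $c_0$- and $\ell_1$-sums and their group actions.
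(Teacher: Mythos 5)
Your proof is correct and follows the same basic strategy as the paper's (whose entire proof is two sentences): assemble the countably many representations into one representation on a direct sum of the $V_n$'s, the only substantive point being that the sum is again a separable Rosenthal space. The difference is in how that permanence is handled. The paper takes the $\ell_2$-sum and quotes the equivalence of (1) and (2) in the Odell--Rosenthal theorem (Fact \ref{t:Ros}): for separable $V$, being Rosenthal is equivalent to $\card(V^{**})=\card(V)$, and since $\bigl(\bigoplus_n V_n\bigr)_{\ell_2}^{**}=\bigl(\bigoplus_n V_n^{**}\bigr)_{\ell_2}$ has cardinality at most $(2^{\aleph_0})^{\aleph_0}=2^{\aleph_0}$, this is immediate. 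You take the $c_0$-sum and verify the permanence directly from Rosenthal's weak-Cauchy criterion (Fact \ref{t:Ros0}) by diagonal extraction; note that once every coordinate sequence is weak-Cauchy no gliding hump is actually needed, because for $\phi=(\phi_n)\in\bigl(\bigoplus_n V_n^*\bigr)_{\ell_1}$ and a bounded sequence $w^{(k)}$ the tails satisfy $\sum_{n>N}|\phi_n(w^{(k)}_n)|\le M\sum_{n>N}\|\phi_n\|$, which is uniformly small in $k$. Your route has the marginal advantage of not using separability of the $V_n$ in the permanence step (the paper's cardinality argument does), at the cost of more verification; the rescaling so that $\|\alpha_n(x)\|\le 2^{-n}$, the coordinatewise isometric action, and the weak$^*$ continuity and injectivity of $\alpha$ are all routine and correctly handled.
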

\begin{proof}
Observe that the $l_2$-sum of a sequence of
separable Rosenthal
Banach spaces is again a separable Rosenthal space. Indeed,
this follows for instance from the
equivalence of (1) and (2) in Odell and Rosenthal's Theorem (Fact
\ref{t:Ros}).
The rest is similar to the proof of \cite[Lemma 4.9]{Menz}.
\end{proof}


\begin{thm} \label{t:general}
Let $X$ be a compact $G$-space, $F \subset C(X)$ a Rosenthal
family for $X$ such that
$F$ is $G$-invariant (that is, $fG \subset F \ \ \forall f \in F$). Then
\ben
\item
there exist: a Rosenthal Banach space $V$, an injective mapping
$\nu: F \to B_V$ into the unit ball $B_V$ and a continuous representation
$$
h: G \to Iso(V), \ \ \ \a:
X \to V^*
$$
of $(G,X)$ on $V$ such that $\a$ is
a weak$^*$ continuous map (topological embedding if $F$
separates points of $X$) and
$$
f(x)= \langle \nu(f), \a(x)
\rangle \ \ \ \forall \ f \in F \ \ \forall \ x \in X.
$$
Thus the following diagram commutes

\begin{equation*}
\xymatrix{ F \ar@<-2ex>[d]_{\nu} \times X
\ar@<2ex>[d]^{\a} \ar[r]  & \R \ar[d]^{id_{\R}} \\
V \times V^* \ar[r]  &  \R }
\end{equation*}

\item
If $X$ is metrizable then in addition we can suppose that
$V$ is separable and there exists a homeomorphic embedding
$$\nu_0: K:=\cls_p(F) \hookrightarrow V^{**}$$ furnishing
$V^{**}$ with its weak$^*$ topology
and the following diagram commutes

\begin{equation*}
\xymatrix{ K \ar@<-2ex>[d]_{\nu_0} \times X
\ar@<2ex>[d]^{\a} \ar[r]  & \R \ar[d]^{id_{\R}} \\
V^{**} \times V^* \ar[r]  &  \R }
\end{equation*}

\een
\end{thm}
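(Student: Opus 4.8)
The plan is to build $V$ via the Davis--Figiel--Johnson--Pe\l czy\'nski (DFJP) interpolation machine applied to a suitable $G$-invariant, norm-bounded, convex, symmetric subset of $C(X)$, and then verify that the resulting space is Rosenthal using the characterization of Rosenthal families (Fact \ref{t:Talagr}) together with Lemma \ref{l:crit}. First I would replace $F$ by $W:=\overline{\mathrm{co}}\,(F\cup -F)$, the pointwise-norm-closed absolutely convex hull of $F$ in $C(X)$; by Proposition \ref{c:conv} this $W$ is still a Rosenthal family for $X$, it is still $G$-invariant since $G$ acts by isometries of $C(X)$ permuting $F$, and it is bounded, convex and symmetric. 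Then I would feed $W$ into the DFJP construction: one forms the sequence of equivalent norms $\|\cdot\|_n$ on $E:=\mathrm{span}(W)$ whose unit balls are $2^{n}W + 2^{-n}B_{C(X)}$, sets $V$ to be the subspace of $E$ on which $\|x\|_V^2:=\sum_n \|x\|_n^2$ is finite, with the inclusion $j:V\hookrightarrow C(X)$ continuous and $W$ absorbed by $B_V$. Because $G$ acts by isometries of $C(X)$ preserving $W$, each norm $\|\cdot\|_n$ is $G$-invariant, hence so is $\|\cdot\|_V$; this gives the co-homomorphism $h:G\to \mathrm{Iso}(V)$. The map $\nu:F\to V$ is just the (corestricted) inclusion, which has bounded image since $F\subset W\subset c\, B_V$ for a suitable constant $c$.

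Next I would produce $\a:X\to V^*$. The adjoint $j^*:C(X)^*\to V^*$ is weak$^*$-to-weak$^*$ continuous; composing the canonical evaluation $X\to C(X)^*$, $x\mapsto \delta_x$ with $j^*$ gives a weak$^*$-continuous bounded map $\a:X\to V^*$, and it is a $G$-map because all the maps in sight are $G$-equivariant (the dual action on $V^*$ is the one induced by $h$, and $j$ intertwines the $G$-actions on $V$ and $C(X)$). The identity $f(x)=\langle\nu(f),\a(x)\rangle$ is immediate from the definitions: $\langle\nu(f),\a(x)\rangle=\langle j(f),\delta_x\rangle=f(x)$. If $F$ separates the points of $X$ then so does $\nu(F)\subset V$, which forces $\a$ to be injective, and since $X$ is compact and $\a$ is weak$^*$-continuous, $\a$ is a topological embedding. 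This gives the commuting diagram in part (1).

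The one genuinely nontrivial point — and the main obstacle — is that $V$ is a Rosenthal space. Here I would use that $W$ is a Rosenthal family for $X$: by Lemma \ref{l:crit} it suffices to check that $B_V$ is a Rosenthal family for $(B_{V^*},w^*)$, and by Fact \ref{t:Talagr} this reduces to the assertion that every sequence in $B_V$ has a weak-Cauchy subsequence. A standard property of the DFJP construction is that the inclusion $j:V\to C(X)$ is weakly compact precisely when $W$ is relatively weakly compact; more to the point here, $j$ maps $B_V$ into a bounded multiple of $\cls_p(W)\subset\mathcal F(X)$ (using that $W$ is a Rosenthal family). Since $j$ is injective and continuous, and the weak topology of $V$ is finer than the one pulled back from $C(X)$, it is enough to see that $j(B_V)$ is a Rosenthal family for $X$: indeed $j(B_V)\subset \overline{\mathrm{co}}(W\cup\text{small ball})$ type set and $\cls_p(j(B_V))\subset \mathcal F(X)$ by Proposition \ref{c:conv} applied to $W$ together with the fact that adding a norm-small ball does not destroy the Rosenthal property; then by Fact \ref{t:Talagr} any sequence in $j(B_V)$ has a pointwise convergent, hence weak-Cauchy (in $C(X)$), subsequence, and the corresponding subsequence in $B_V$ is weak-Cauchy in $V$ because $j$ is a weak-to-weak homeomorphism onto its image on bounded sets (as $j^*$ has weak$^*$-dense range, $j^{**}$ is injective). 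Hence $V$ is Rosenthal.

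For part (2), when $X$ is metrizable I would first arrange $V$ separable: $C(X)$ is separable, hence $E=\mathrm{span}(W)$ and therefore $V$ are separable. To get $\nu_0:K=\cls_p(F)\hookrightarrow V^{**}$, apply the extension map of Section 4.5: each $\phi\in K\subset \mathcal F(X)=\calB_1(X)$ is bounded Baire $1$, so $T\phi\in b\calB_1(B^*)\cap C(X)^{**}$, and then compose with the adjoint $j^{**}:C(X)^{**}\to V^{**}$; set $\nu_0(\phi):=j^{**}(T\phi)$. By Proposition \ref{p:B-star} the map $T$ restricted to $\cls_p(F)$ is a homeomorphism onto its image in the weak$^*$ topology of $C(X)^{**}$, and $j^{**}$ is weak$^*$-to-weak$^*$ continuous and injective (again because $j^*$ has weak$^*$-dense range); since $K$ is compact, $\nu_0$ is a weak$^*$-homeomorphic embedding into $V^{**}$. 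It remains to note that $\nu_0$ extends $\nu$ on $F$ (for $f\in F\subset C(X)$ one has $T f$ = canonical image of $f$ in $C(X)^{**}$, so $j^{**}(Tf)=\nu(f)$ as an element of $V\subset V^{**}$) and that the pairing is compatible, i.e.\ $\phi(x)=\langle \nu_0(\phi),\a(x)\rangle$ for all $\phi\in K$, $x\in X$: this holds for $f\in F$ by part (1), and both sides are (sequentially, hence by the Fr\'echet property of $K$) continuous in $\phi$ along the pointwise topology, so it passes to the closure. This yields the second commuting diagram and completes the proof.
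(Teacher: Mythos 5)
Your overall architecture is exactly that of the paper: DFJP interpolation applied to the symmetrized convex hull $W$ of $F$, the representation via $j^*$, Rosenthality of $V$ reduced through Lemma \ref{l:crit} and Fact \ref{t:Talagr} to sequential precompactness of $B_V$, and part (2) via the map $T$ and the injectivity of $j^{**}$. However, there is a genuine gap at precisely the step you flag as the main obstacle. You justify $\cls_p(j(B_V))\subset\mathcal F(X)$ by Proposition \ref{c:conv} ``together with the fact that adding a norm-small ball does not destroy the Rosenthal property.'' That fact is false as stated: for any fixed $n$ the set $M_n=2^nW+2^{-n}B$ contains $2^{-n}B$, and the unit ball $B$ of $C(X)$ is essentially never a Rosenthal family for $X$ (e.g.\ the projections on $\{0,1\}^{\N}$ have no pointwise convergent subsequence), so no single $M_n$ is a Rosenthal family. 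What is true, and what the proof actually requires, is that the \emph{intersection} $\bigcap_n M_n$ (which contains $j(B_V)$) is sequentially precompact in $(\Acal^{**},\sigma(\Acal^{**},\Acal^*))$. This does not follow from the Rosenthality of $W$ by any soft perturbation principle; it needs the DFJP-style argument: given a sequence $c_i=2^kw_i^k+2^{-k}b_i^k$ with $w_i^k\in W$, extract by a diagonal procedure a subsequence along which each $(w_i^k)_i$ is weak$^*$ convergent (here is where sequential precompactness of $W$ in $\R^{B^*}$ is used), observe that the limits $2^kx_k^{**}$ form a norm-Cauchy sequence because $\|2^kw_i^k-2^lw_i^l\|\le 2\cdot2^{-\min\{k,l\}}$, and then show $c_i$ converges weak$^*$ to $\lim_k 2^kx_k^{**}$ by a three-term estimate. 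Without this telescoping argument the proof of the central claim --- that $V$ is Rosenthal --- is missing.

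Two smaller points. First, $j^{**}$ is the second adjoint of $j:V\to C(X)$ and therefore maps $V^{**}$ \emph{into} $C(X)^{**}$, not the other way around; your $\nu_0$ must be $(j^{**})^{-1}\circ T$ on the image, which is legitimate since $T(K)=\cls_{\Acal^{**}}(F)\subset j^{**}(\overline{B_V})$ by weak$^*$ compactness, but as written $j^{**}(T\phi)$ does not typecheck. Second, to pass from weak-Cauchyness of $(jv_n)$ in $C(X)$ to weak-Cauchyness of $(v_n)$ in $V$ you need $j^*$ to have \emph{norm}-dense range in $V^*$ (equivalent to injectivity of $j^{**}$, which is the DFJP lemma you cite), not merely weak$^*$-dense range; the conclusion is correct but the stated implication is reversed. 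These are repairable, whereas the missing intersection argument is the substantive hole.
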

\sk
\begin{proof} \textbf{(1)}  \ Let $W$ be the symmetrized convex hull of $F$; that is,
$$W:={\co}(F \cup -F).$$

\sk

\nt \textbf{Claim 1:} $W$ is also a Rosenthal family for $X$.
\begin{proof} It is easy to see that $F \cup -F$ is a
Rosenthal family for $X$. Now apply Proposition \ref{c:conv}.
\end{proof}

For brevity of notation let $\Acal := C(X)$ denote the Banach
space $C(X)$, $B$ will denote its unit ball,
and $B^*$ will denote
the weak$^*$ compact unit ball of the dual space $\Acal^*
=C(X)^*$.

\sk

\nt \textbf{Claim 2:} \emph{$W$ is a Rosenthal family for
$B^*$.}

\begin{proof}
Apply Proposition \ref{p:from X to B*}.
\end{proof}

\br

Consider the sequence of sets $M_n:=2^n W + 2^{-n} B$.
Since $W$ is convex and
symmetric, we can apply the construction of
Davis-Figiel-Johnson-Pelczy\'nski
\cite{DFJP}
as follows.
Let $\| \ \|_n$ be the Minkowski
functional of the set $M_n$. That is,
$$
\| v\|_n = \inf\ \{\lambda
> 0 \bigm| v\in \lambda M_n\}.
$$
Then $\| \ \|_n$ is a norm on $\Acal$ equivalent to the given norm
of $\Acal$. For $v\in \Acal,$ set
$$
N(v):=\left(\sum^\infty_{n=1} \| v \|^2_n\right)^{1/2} \hskip
0.1cm \text{and} \hskip 0.1cm \hskip 0.1cm V: = \{ v \in \Acal
\bigm| N(v) < \infty \}.
$$
Denote by $j: V \hookrightarrow \Acal$ the inclusion map. Then
$(V,N)$ is a Banach space, $j: V \to \Acal$ is a continuous linear
injection and
$$
W \subset j(B_V)=B_V.
$$
Indeed, if $v \in W$ then $2^nv \in M_n$, hence $\| v\|_n \leq
2^{-n}$ and $N(v)^2 \leq \sum_{n \in \N} 2^{-2n} \allowbreak <1.$

\sk

\sk
The given action $G \times X \to X$ induces the natural linear norm preserving
continuous right action $C(X) \times G \to C(X)$ on the Banach space $\Acal=C(X)$.
It follows by the construction that $W$ and $B$ are $G$-invariant subsets in $\Acal$.
This implies that $V$ is a $G$-invariant subset of $\Acal$ and the restricted natural linear action
$V \times G \to V, \ \ (v,g) \mapsto vg$ is norm preserving, that is, $N(vg)=N(v)$.
Moreover, by the definition of the norm $N$, we can show that this action is norm
continuous (use the fact that, for each $n \in \N$, the norm
$\norm{\cdot}_n$ on $\Acal$ is equivalent to the given norm on
$\Acal$). Therefore, the co-homomorphism $h: G \to {\Iso}(V), \
h(g)(v):=vg$ is well defined and continuous.

Let $j^*: \Acal^* \to V^*$ be the adjoint map of $j: V \to \Acal$.
Define $\a: X \to V^*$ as follows. For every $x \in X \subset
C(X)^*$ set $\a(x)=j^*(x)$. Then $(h,\a)$ is a continuous
representation of $(G,X)$ on the Banach space $V$.


\sk

By the construction $F \subset W \subset B_V$.
Define $\nu: F \hookrightarrow B_V$ as the natural inclusion.
Then
\begin{equation} \label{F}
f(x)= \langle \nu(f), \a(x) \rangle \ \ \ \forall \ f \in F \ \ \forall \ x \in X.
\end{equation}

It follows in particular that if $F$ separates points of $X$ then $\a$ is an embedding.

\br

\nt \textbf{Claim 3:} $j(B_V) \subset \bigcap_{n \in \N} M_n =
\bigcap_{n \in \N} (2^n W + 2^{-n}B)$.
\begin{proof}
The norms $\norm{\cdot}_n$ on $\Acal$ are equivalent to each
other. It follows that if $v \in B_V$ then $\| v\|_n < 1$ for all
$n \in \N$. That is, for every $n \in \N$, $v \in \lambda_n M_n$
for some $0 < \lambda_n <1$. By the construction $M_n$ is a convex
subset containing the origin. This implies that $\lambda_n M_n
\subset M_n$. Hence $j(v)=v \in M_n$ for every $n \in\N$.
\end{proof}

\sk

\nt \textbf{Claim 4:} The set $\bigcap_{n \in \N} M_n$ (and hence
also its subset $j(B_V)$) is sequentially precompact in the second
dual $(\Acal^{**}, \sigma(\Acal^{**}, \Acal^*))$ (i.e.
$\Acal^{**}$ endowed with its weak$^*$ topology).

\begin{proof} We use the argument of
\cite[Lemma 1 (xii), p. 323]{DFJP} with some minor changes.

Let $\{c_n\}_1^{\infty}$ be a sequence in $\bigcap_{k \in \N}
M_k$. Then for each fixed $n \in \N$ and every $k \in \N$ we can
represent $c_n$ as
\begin{equation} \label{nk}
c_n = 2^k w_n^k +2^{-k} b_n^k
\end{equation}
with $w_n^k \in W, \ b_n^k \in B$. By Claim 2 we know that
$W$ is a Rosenthal family for $B^*$.
Thus by Fact \ref{t:Talagr}, $W$ is sequentially precompact in $\R^{B^*}$,
hence also
in $\R^{C(X)^*}$. It follows that
$W$ is sequentially precompact in $(\Acal^{**}, \sigma(\Acal^{**}, \Acal^*))$
(as $\Acal=C(X)$).
Applying a diagonal process we can
choose a subsequence $\{n_i\}_{i=1}^{\infty}$ such that for each
$k$ the sequence $\{w_{n_i}^k\}_{i=1}^{\infty}$ is
$\sigma(\Acal^{**}, \Acal^*)$-convergent to an element, say,
$x_k^{**} \in \Acal^{**}$. In order to simplify our notation we
will relabel our sequences and now assume that for every $k$,
\begin{equation}\label{n-limit}
w^*{\text{-}}\lim_{n} w_n^k = x_k^{**}
\end{equation}
in $\Acal^{**}$.

\sk

\nt \textbf{Claim 5:} The sequence $2^k x_k^{**}$ is norm Cauchy
in the second dual $\Acal^{**}$.

\begin{proof}
Note first that by (\ref{nk}) we have:


\begin{equation}\label{norm}
\|2^k w_n^k - 2^l w_n^l\| =
\|2^{-k} b_n^k - 2^{-l} b_n^l\| \le 2\cdot
2^{-\min\{k,l\}}.
\end{equation}
Now for a fixed unit vector $x^* \in \Acal^*$ we have:
\begin{gather*}
|\langle x^*, 2^k x_k^{**} - 2^l x_l^{**} \rangle |  \le\\
|\langle x^*, 2^k x_k^{**} - 2^k w_n^k \rangle |  + |\langle x^*,
2^k w_n^k - 2^l w_n^l \rangle \rangle | +
|\langle x^*,  2^l w_n^l - 2^k x_l^{**} \rangle |  \le\\
|\langle x^*, 2^k x_k^{**} - 2^k w_n^k \rangle |  + 2\cdot
2^{-\min\{k,l\}} + |\langle x^*,  2^l w_n^l - 2^k x_l^{**} \rangle|.
\end{gather*}

By (\ref{n-limit})
and (\ref{norm})
 we get
\begin{equation*}
|\langle x^*, 2^k x_k^{**} - 2^l x_l^{**} \rangle |  \le
2\cdot 2^{-\min\{k,l\}},
\end{equation*}
hence
$$
||2^k x_k^{**} - 2^l x_l^{**}|| =
\sup_{||x^*|| = 1} |\langle x^*, 2^k x_k^{**} - 2^l x_l^{**}
\rangle |
\le 2\cdot 2^{-\min\{k,l\}}.
$$
\end{proof}

\sk

We let $\xi :=\lim_{k \to \infty} 2^k x_k^{**}$ in the Banach
space $\Acal^{**}$.

\br

\nt \textbf{Claim 6:} $ w^*{{\text-}}\lim_{n \to \infty} c_n=\xi$;
i.e. $\{c_n\}_{n=1}^{\infty}$ is $\sigma(\Acal^{**},
\Acal^*)$-convergent to $\xi \in \Acal^{**}$.
\begin{proof}
Again fix a unit vector $x^* \in \Acal^*$.
Given $\ep >0$ fix $k_0 \in \N$ such that $2^{-k_0} < \ep$, and
$$||2^k x_k^{**} - \xi || < \ep$$ for $k > k_0$. By (\ref{nk}),
$\|c_n - 2^k w_n^k\| \le 2^{-k}$, hence for $k
> k_0$ and every $n \in \N$ we get
\begin{gather*}
|\langle x^*, c_n - \xi \rangle |  \le \\
|\langle x^*, c_n - 2^k w_n^k \rangle | +
|\langle x^*, 2^k w_n^k - 2^k x_k^{**} \rangle| +
|\langle x^*, 2^k x_k^{**} - \xi \rangle| \le\\
 \ep + |\langle x^*, 2^k
w_n^k - 2^k x_k^{**} \rangle|  + \ep.
\end{gather*}
For an arbitrary but fixed $k > k_0$, by (\ref{n-limit})
again, there is an $n_0 \in \N$ such that for $n > n_0$,
$$
|\langle x^*, 2^k w_n^k - 2^k x_k^{**} \rangle| < \ep.
$$
Combining these inequalities we see that for $n > n_0$
$$
|\langle x^*, c_n - \xi \rangle |  \le 3\ep.$$
\end{proof}
Note that now we have completed the proof of Claim 4.
\end{proof}

\sk

Let $\overline{B_V}$ be the $\sigma(V^{**}, V^*)$-closure of the
unit ball $B_V$ in $V^{**}$ (in fact $\overline{B_V}$ is the unit
ball of the second dual $V^{**}$ by
Goldstine's theorem). Similarly,
$\overline{j^{**}(B_V)}=\overline{j(B_V)}$ denotes the
$\sigma(\Acal^{**}, \Acal^*)$-closure in the second dual
$\Acal^{**}$.
From Claim 4 we know that $j(B_V)$ is sequentially precompact in
$\overline{j(B_V)}$.
In order to show
that $V$ is a Rosenthal space we have to establish that
$B_V$ is a Rosenthal family for $B^*$ (Lemma \ref{l:crit}).
In turn, this is equivalent by Fact \ref{t:Talagr} to showing that $B_V$
is sequentially precompact in $\R^{B^*}$, or equivalently
in the $\sigma(V^{**}, V^*)$-compact space $\overline{B_V}$.

It is enough to find a homeomorphism
$\eta: \overline{B_V} \to
\overline{j(B_V)}$ such that $\eta(b)=j(b)$ for every $b \in B_V$.
Consider the canonical second adjoint map
$$
j^{**}: V^{**} \to \Acal^{**}
$$
This map is injective by \cite[Lemma 1(iii)]{DFJP}. Using the
compactness of $j^{**}(\overline{B_V})$ we get
$j^{**}(\overline{B_V})=\overline{j^{**}(B_V)}=
\overline{j(B_V)}$. Now the required homeomorphism $\eta$ is the
restriction of $j^{**}$ to the $\sigma(V^{**}, V^*)$-compact space
$\overline{B_V}$.

We obtain that $V$ is a Rosenthal space and the assertion \textbf{(1)} is proved.


\br

\nt \textbf{(2)} \ If the compact space $X$ is metrizable then $C(X)$ is
separable and it is also easy to see that $(V, N)$ is separable.
The pointwise closure
$K:=\cls_p (F)$ of $F$ in $\R^X$ is a subset of $b \calB_1(X)$.
Consider now the mapping $T: b \calB_1(X)  \to C(X)^{**}$ (see
Remark \ref{r:functionals}). This map induces by Proposition
\ref{p:B-star} a homeomorphic embedding
$$
T: \ K=\cls_p(F) \to \cls_p(T(F)) \subset b \calB_1(B^*) \cap
C(X)^{**}
$$
of $K$ into $\Acal^{**}=C(X)^{**}$.

By construction $F$ is a subset of $V$. Consider the second
adjoint map
$$
j^{**}: V^{**} \to \Acal^{**}.
$$
Recall again that
this map is injective by \cite[Lemma 1 (iii)]{DFJP}. Therefore
$j^{**}$ induces a homeomorphism between the weak$^*$ compact spaces
$\cls_{V^{**}}(F)$ and $\cls_{\Acal^{**}}(F)$. Summing up we can
define the desired homeomorphic embedding as follows

$$
\nu_0: K
\to V^{**}, \ \ \ f \mapsto
({j^{**}})^{-1}(T(f)).
$$
Observe that $\nu_0(f)=\nu(f)$ for every $f \in F$. Since $F=\nu(F)$ is pointwise dense in $K$, using \ref{F}, we get
$$
f(x)= \langle \nu_0(f), \a(x)
\rangle \ \ \ \forall \ f \in K ,\ \ \forall \ x \in X.
$$
So, Theorem \ref{t:general} is proved.
\end{proof}

\sk

Recall again (see Section \ref{subs:RN}) that a compact space $X$
is called Radon-Nikod\'ym (RN) if $X$ is homeomorphic to a
weak$^*$ compact subset of the dual $V^*$ for an Asplund space
$V$. A well known result characterizes Rosenthal spaces as those
Banach spaces whose dual has the weak Radon-Nikod\'ym property
\cite[Corollary 7.3.8]{Tal}. It is therefore natural to introduce
the following definition.

\begin{defn} \label{d:WRN}
We say that a compact topological space $X$ is \emph{weakly
Radon-Nikod\'ym} (WRN) if $X$ is homeomorphic to a weak$^*$
compact subset of the dual $V^*$ of a Rosenthal space $V$. This
definition agrees with the definition of WRN subsets (cf. Remark
\ref{r:WRN}.2).
A compact $G$-space $X$ is called
a WRN $G$-space if $X$, as a $G$-space, is Rosenthal representable.
\end{defn}

As a corollary of Theorem \ref{t:general} we get the
following characterization of WRN compacta.

\begin{thm} \label{t:WRN}
Let $X$ be a compact space. The following conditions
are equivalent: \ben
\item
$X$ is WRN.
\item
There exists a Rosenthal family $F \subset C(X)$ of $X$
which separates the points of $X$.
 \een
\end{thm}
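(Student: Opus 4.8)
The plan is to obtain both implications by assembling results already in hand, with essentially all the real work delegated to Theorem \ref{t:general} and Lemma \ref{l:genQuot}; there is no genuinely hard step here beyond careful bookkeeping.

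For the direction $(2) \Rightarrow (1)$, I would simply apply Theorem \ref{t:general} with the trivial group $G=\{e\}$. Then the $G$-invariance hypothesis $fG \subset F$ is automatic, so part (1) of that theorem produces a Rosenthal Banach space $V$, a bounded injection $\nu\colon F \to V$, and a weak$^*$ continuous map $\a\colon X \to V^*$ with $f(x) = \langle \nu(f), \a(x)\rangle$ for all $f \in F$ and $x \in X$; moreover $\a$ is a topological embedding precisely because $F$ separates the points of $X$. Since $X$ is compact and $\a$ is weak$^*$ continuous, $\a(X)$ is a weak$^*$ compact subset of $V^*$, so $X$ is WRN in the sense of Definition \ref{d:WRN}. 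Note that metrizability of $X$ is not needed here, as part (1) of Theorem \ref{t:general} makes no such assumption.

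For the direction $(1) \Rightarrow (2)$, suppose $\a\colon X \to V^*$ is a weak$^*$ homeomorphic embedding onto a weak$^*$ compact subset of the dual of a Rosenthal space $V$. After multiplying the norm of $V$ by a suitable constant we may assume $\a(X) \subset B_{V^*}$. By Lemma \ref{l:crit}, the unit ball $B_V$ is a Rosenthal family for the weak$^*$ compact ball $B_{V^*}$. Viewing $\a$ as a continuous map $q\colon X \to B_{V^*}$ and setting $F := B_V \circ q = \{v \circ q : v \in B_V\} \subset C(X)$ (which is norm bounded, with $\norm{v\circ q}_{C(X)} \le \norm{v} \le 1$), Lemma \ref{l:genQuot}.3(a) applied with $X_1 = X$, $X_2 = B_{V^*}$, $F_1 = F$, $F_2 = B_V$ shows that $F$ is a Rosenthal family for $X$. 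Finally $B_V$ separates the points of $V^*$, hence of $\a(X)$, so $F$ separates the points of $X$, which gives (2).

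The only point that requires any attention is the rescaling of $V$ in $(1) \Rightarrow (2)$ so that Lemma \ref{l:crit} may be invoked on the unit ball, together with the verification that $F$ indeed lands in $C(X)$ and is norm bounded; everything else is a direct citation. The substantive content — the DFJP-type interpolation construction yielding a Rosenthal $V$ out of a Rosenthal family — has already been carried out in the proof of Theorem \ref{t:general}.
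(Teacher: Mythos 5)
Your proof is correct and follows the route the paper intends: the paper states this theorem as an immediate corollary of Theorem \ref{t:general} (applied with trivial $G$, which is exactly your $(2)\Rightarrow(1)$), and the converse direction via Lemma \ref{l:crit} together with the pullback Lemma \ref{l:genQuot}.3(a) (only the direction not requiring $q$ to be onto, which reduces to Lemma \ref{simple-fr}.2) is the standard argument the paper leaves implicit. Your attention to the rescaling into $B_{V^*}$ and to the boundedness and point-separation of $F=B_V\circ q$ is exactly the bookkeeping required.
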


\begin{remark}
We mention without proof that a compact $G$-space $X$ is RN (that
is, Asplund representable) if and only if there exists a
bounded
subset $F \subset C(X)$ such that $F$ separates points of $X$, $F$ is
$G$-invariant and $F$ is a fragmented family of functions
(Definition \ref{d:fr-family}). Comparing this with Theorem
\ref{t:X-repres-iff} below we see that there exists a
complete analogy between RN-systems and fragmented families
on one side and WRN-systems and Rosenthal families on the
other.
\end{remark}


Next we present a characterization of tame functions in terms of
Banach representations.

\begin{thm} \label{t:functrepr}
Let $X$ be a compact $G$-space.
The following conditions are equivalent:
\begin{enumerate}
\item $f: X \to \R$ is tame.
\item $f: X \to \R$ comes from a Rosenthal Banach
space. That is, there exist a continuous representation $(h,\a)$
of $(G,X)$ on a Rosenthal Banach space $V$ and a vector $v \in V$
such that

$$f(x)= \langle v, \a(x)
\rangle \ \ \  \forall \ x \in X.$$
\end{enumerate}

If $X$ is metrizable we can suppose in addition in (2) that $V$ is
separable.

\end{thm}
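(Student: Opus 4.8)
The plan is to deduce both implications from the structural results already established, principally Proposition~\ref{p:tame-f} and Theorem~\ref{t:general}, with the cyclic $G$-system $X_f$ serving as the bridge.

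For the implication $(2)\Rightarrow(1)$, I would argue exactly as in Theorem~\ref{t:1dir}. Suppose $(h,\alpha)$ is a continuous representation of $(G,X)$ on a Rosenthal Banach space $V$ and $v\in V$ satisfies $f(x)=\langle v,\alpha(x)\rangle$ for all $x\in X$. The associated flow $(\Iso(V),B^*)$ is tame by Theorem~\ref{t:1dir}, and $\alpha: X\to V^*$ is a $G$-map with image in the weak$^*$ compact ball $r B^*$ for a suitable $r>0$; composing with the rescaling we obtain a $G$-map $X\to B^*$ into a tame system. Since the class of tame $G$-systems is closed under factors and subsystems (Lemma~\ref{l:tame-prop}), and more directly since $f=\langle v,-\rangle\circ\alpha$ exhibits $f$ as coming from (the closure of the image of $\alpha$ in) a tame $G$-system, we conclude $f\in Tame(X)$ by Definition~\ref{d:tameF}. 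Alternatively one checks directly that $fG$ is a Rosenthal family for $X$: for $p\in E(\Iso(V),B^*)\cong E(\Iso(V),V^*)$ the map $f\circ p$ restricted to $B^*$ is fragmented (it lies in $V^{**}$ restricted to $B^*$, hence fragmented by Fact~\ref{t:SS}), so $\cls_p(fG)\subset\mathcal F(X)$ and Proposition~\ref{p:tame-f} gives $f\in Tame(X)$.

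For the implication $(1)\Rightarrow(2)$, suppose $f\in Tame(X)$. By Proposition~\ref{p:tame-f}, the orbit $F:=fG$ is a Rosenthal family for $X$, and it is visibly $G$-invariant since $(fg)g'=f(gg')\in fG$. Thus the hypotheses of Theorem~\ref{t:general} are satisfied with this $F$. Applying Theorem~\ref{t:general}(1), we obtain a Rosenthal Banach space $V$, an injective bounded map $\nu:fG\to V$, and a continuous representation $(h,\alpha)$ of $(G,X)$ on $V$ with $f'(x)=\langle\nu(f'),\alpha(x)\rangle$ for every $f'\in fG$ and $x\in X$. Taking $f'=f$ and setting $v:=\nu(f)\in V$ gives $f(x)=\langle v,\alpha(x)\rangle$ for all $x\in X$, which is exactly condition~(2). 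When $X$ is metrizable, Theorem~\ref{t:general}(2) guarantees that the space $V$ produced by the construction is separable (as noted there, $C(X)$ is separable and the DFJP space $(V,N)$ inherits separability), yielding the final assertion.

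The only subtlety — and the one place the argument is not purely a citation — is the passage, in $(1)\Rightarrow(2)$, from ``$f$ is tame'' to ``$fG$ is a Rosenthal family for $X$''. This is precisely the content of Proposition~\ref{p:tame-f}, equivalence $(1)\Leftrightarrow(3)$, whose proof routes through the cyclic $G$-system $X_f$: tameness of $f$ means $f$ comes from some tame compact $G$-system, which forces $X_f$ itself to be tame, which in turn forces every $f_0\circ p$ (and hence, pulling back, every element of $\cls_p(fG)$) to be fragmented. Once this is in hand, Theorem~\ref{t:general} does all the Banach-space work, so the remaining steps are bookkeeping: verifying $G$-invariance of $fG$, reading off $v=\nu(f)$, and invoking the metrizable refinement. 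I do not anticipate a genuine obstacle; the theorem is essentially the ``single-function'' shadow of the system-level Theorem~\ref{t:general}.
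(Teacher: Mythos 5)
Your proposal is correct and follows essentially the same route as the paper: $(2)\Rightarrow(1)$ is deduced from Theorem \ref{t:1dir} (tameness of the ball flow plus heredity under subsystems/factors), and $(1)\Rightarrow(2)$ applies Proposition \ref{p:tame-f} to see that $F:=fG$ is a $G$-invariant Rosenthal family and then invokes Theorem \ref{t:general} with $v:=\nu(f)$, with the separability refinement coming from Theorem \ref{t:general}(2). The extra details you supply (the rescaling into $B^*$, the $V^{**}$-fragmentability check, and the $G$-invariance of $fG$) are all consistent with the paper's intended argument.
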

\begin{proof} (2) $\Rightarrow$ (1):
Directly follows by Theorem \ref{t:1dir}.

(1) $\Rightarrow$ (2): Let $f \in Tame(X)$. This means 
By Proposition \ref{p:tame-f} that the orbit $fG$ is a Rosenthal
family for $X$. Now we can apply Theorem \ref{t:general} to the
family $F:=fG$.
\end{proof}

Here are the promised Banach space
characterizations of tame and metrizable tame systems.

\begin{thm} \label{t:tame=approx}
Let $X$ be a compact $G$-space.
The following conditions are equivalent:
\begin{enumerate}
\item $(G,X)$ is a tame $G$-system.
\item $(G,X)$ is Rosenthal approximable.
\end{enumerate}
\end{thm}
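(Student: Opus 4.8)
The plan is to obtain this equivalence as a synthesis of Theorem~\ref{t:1dir} (``Rosenthal representable $\Rightarrow$ tame''), the permanence properties of the tame class recorded in Lemma~\ref{l:tame-prop}, and the function-level characterizations in Corollary~\ref{c:tame-char} and Theorem~\ref{t:functrepr}. Here $(G,X)$ being \emph{Rosenthal approximable} means, by analogy with the RN-approximable case of Fact~\ref{t:HNS}, that there is a family $\{(h_i,\alpha_i)\}_{i\in I}$ of continuous representations of $(G,X)$ on Rosenthal Banach spaces $V_i$ whose maps $\alpha_i\colon X\to V_i^*$ jointly separate the points of $X$.

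For the direction $(2)\Rightarrow(1)$ I would argue as follows. Fix such a separating family. For each $i$, since $\alpha_i$ is a $G$-map and $X$ is compact, the image $Y_i:=\alpha_i(X)$ is a compact $G$-invariant subset of $V_i^*$ with its weak$^*$ topology; the inclusion $Y_i\hookrightarrow V_i^*$ together with $h_i$ is a faithful continuous representation of $(G,Y_i)$ on the Rosenthal space $V_i$, so $(G,Y_i)$ is tame by Theorem~\ref{t:1dir}. Because the $\alpha_i$ separate points and $X$ is compact, the diagonal map $\alpha:=(\alpha_i)_{i\in I}\colon X\to\prod_{i\in I}Y_i$ is a $G$-equivariant topological embedding; since the product $\prod_{i\in I}Y_i$ is tame by Lemma~\ref{l:tame-prop}, so is its subsystem $\alpha(X)\cong X$.

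For the direction $(1)\Rightarrow(2)$ I would use that a tame system satisfies $C(X)=Tame(X)$ by Corollary~\ref{c:tame-char}. Hence every $f\in C(X)$ is a tame function, and Theorem~\ref{t:functrepr} provides a continuous representation $(h_f,\alpha_f)$ of $(G,X)$ on a Rosenthal space $V_f$ together with a vector $v_f\in V_f$ such that $f(x)=\langle v_f,\alpha_f(x)\rangle$ for all $x\in X$. If $x\neq y$ in $X$, a choice of $f\in C(X)$ with $f(x)\neq f(y)$ forces $\alpha_f(x)\neq\alpha_f(y)$, so the family $\{(h_f,\alpha_f)\}_{f\in C(X)}$ separates points of $X$; that is exactly the statement that $(G,X)$ is Rosenthal approximable. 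When $X$ is metrizable, $C(X)$ is separable, so countably many of these representations already separate points, and Lemma~\ref{l:system} lets one amalgamate them into a single representation on a \emph{separable} Rosenthal space — the refinement that feeds into the metrizable statement Theorem~\ref{t:main-int}.

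I do not expect a substantial obstacle: the argument is bookkeeping around earlier results. The one point to be careful about is that the direction $(2)\Rightarrow(1)$ genuinely has to pass through the (generally non-metrizable) weak$^*$ images $Y_i$ and invoke stability of tameness under subsystems and arbitrary products, rather than attempting to reason about $X$ directly; and in the converse one should note that ``approximable'' (not ``representable on a single Rosenthal space'') is what falls out for general $X$.
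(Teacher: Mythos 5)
Your proposal is correct and follows essentially the same route as the paper: the direction $(2)\Rightarrow(1)$ combines Theorem \ref{t:1dir} with the closure of the tame class under subsystems and arbitrary products (Lemma \ref{l:tame-prop}), and the direction $(1)\Rightarrow(2)$ uses $C(X)=Tame(X)$ from Corollary \ref{c:tame-char} together with Theorem \ref{t:functrepr} and the fact that $C(X)$ separates points. Your extra remarks (spelling out the diagonal embedding, and the separable amalgamation via Lemma \ref{l:system} in the metrizable case) are accurate elaborations of what the paper leaves implicit or defers to Theorem \ref{main}.
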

\begin{proof} (2) $\Rightarrow$ (1):
Apply again Theorem \ref{t:1dir} and take into account that the class
of tame $G$-systems is closed under subsystems and arbitrary
products for every given $G$ (Lemma \ref{l:tame-prop}).

(1) $\Rightarrow$ (2): First of all note that $C(X)=Tame(X)$ by
Corollary \ref{c:tame-char}. Applying Theorem \ref{t:functrepr} we
conclude that every $f \in C(X)=Tame(X)$ on a compact $G$-space
$X$ comes from a Rosenthal representation. Continuous functions
separate points of $X$. This implies that there exist sufficiently
many Rosenthal representations of $(G,X)$.
\end{proof}

\begin{thm} \label{main}
Let $X$ be a compact metric $G$-space. The following conditions
are equivalent:
\begin{enumerate}
\item
$(G,X)$ is tame.
\item
$(G,X)$ is representable on a separable Rosenthal Banach
space.
\end{enumerate}
\end{thm}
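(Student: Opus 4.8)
The plan is to obtain Theorem~\ref{main} as a short corollary of the machinery already developed, treating the two implications separately. For $(2)\Rightarrow(1)$ there is nothing new to do: Theorem~\ref{t:1dir} already asserts that every Rosenthal representable compact $G$-space is tame, and metrizability plays no role there.

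For $(1)\Rightarrow(2)$ I would argue as follows. Since $X$ is compact metrizable, the Banach space $C(X)$ is separable; fix a countable subset $\{f_n\}_{n\in\N}\subset C(X)$ that separates the points of $X$ (for instance a countable dense subset). Tameness of $(G,X)$ means $C(X)=Tame(X)$ by Corollary~\ref{c:tame-char}, so each $f_n$ is a tame function. Applying the metrizable case of Theorem~\ref{t:functrepr} to each $f_n$, we get a separable Rosenthal Banach space $V_n$, a continuous representation $(h_n,\a_n)$ of $(G,X)$ on $V_n$, and a vector $v_n\in V_n$ with $f_n(x)=\langle v_n,\a_n(x)\rangle$ for all $x\in X$. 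Because the $f_n$ separate points of $X$, the countable family $\{(h_n,\a_n)\}_{n\in\N}$ separates points of $X$ as well: if $\a_n(x)=\a_n(y)$ for every $n$ then $f_n(x)=f_n(y)$ for every $n$, whence $x=y$. Now Lemma~\ref{l:system} upgrades this to a single faithful representation of $(G,X)$ on a separable Rosenthal Banach space (the $l_2$-sum of the $V_n$, which is again separable Rosenthal by Fact~\ref{t:Ros}), giving condition~(2). One should note explicitly that faithfulness, i.e. that the combined $\a$ is a topological embedding, follows from separation of points together with compactness of $X$.

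I do not expect a genuine obstacle inside this argument, since the substantive work has already been done elsewhere: the DFJP-type construction that manufactures a Rosenthal space from a $G$-invariant Rosenthal family (Theorem~\ref{t:general}) and the Saab--Saab fragmentability input behind Theorem~\ref{t:1dir}. The only place the metrizability hypothesis is used is precisely the passage to a \emph{single} separable Banach space: it guarantees a countable point-separating family of tame functions; without it one only recovers the ``Rosenthal approximable'' conclusion of Theorem~\ref{t:tame=approx}.
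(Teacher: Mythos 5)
Your proposal is correct and follows essentially the same route as the paper: Theorem~\ref{t:1dir} for $(2)\Rightarrow(1)$, and for $(1)\Rightarrow(2)$ a countable point-separating family of (tame) continuous functions, Theorem~\ref{t:functrepr} applied to each, and Lemma~\ref{l:system} to combine the resulting countably many separable Rosenthal representations into one. The only difference is that you spell out a few details (the $l_2$-sum and the faithfulness of the combined embedding) that the paper leaves implicit.
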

\begin{proof} (2) $\Rightarrow$ (1):
Apply once again Theorem \ref{t:1dir}.

(1) $\Rightarrow$ (2):
Since the compact space $X$
is metrizable there is a sequence of functions $f_n \in
C(X)=Tame(X)$ which
separates
points of $X$. For each $f_n$ we can
construct by Theorem \ref{t:functrepr} a continuous Rosenthal
representation $(h_n,\a_n)$ of $(G,X)$ such that our original function
$f_n$ comes from the system  $(G,\a_n(X))$. Applying Lemma
\ref{l:system} we conclude that $(G,X)$ is Rosenthal
representable.
\end{proof}

If $X$ is a tame, not necessarily metrizable, dynamical
$G$-system then the induced systems $(G, B^*)$
(on the weak$^*$ compact unit ball $B^*$ of $C(X)^*$) and $(G, P(X))$
(where $P(X)$ denotes the weak$^*$ compact subspace of $B^*$ consisting of all
probability measures on $X$) are tame as well (see Corollary \ref{t:B(X)tame}).  For metrizable $X$
this is \cite[Theorem 1.5]{Gl-tame}.
In fact one may show a stronger result:

\begin{thm} \label{t:X-repres-iff}
Let $X$ be a compact $G$-space.
The following conditions are equivalent:
\ben
\item
$(G,X)$ is Rosenthal representable
(i.e. $X$ is a WRN \ $G$-space).
\item
There exists a $G$-invariant Rosenthal family
$A \subset C(X)$ for $X$
which separates points of $X$.
\item
$(G,B^*)$ is Rosenthal representable.
\item
$(G,P(X))$ is Rosenthal representable.
\een
\end{thm}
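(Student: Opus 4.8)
The plan is to prove the cycle of implications $(1)\Rightarrow(2)\Rightarrow(1)$ first, identifying the purely topological content of Rosenthal representability, and then to leverage this equivalence to handle the two ``larger'' systems $B^*$ and $P(X)$ by exhibiting suitable separating Rosenthal families on them. For $(1)\Rightarrow(2)$: given a faithful representation $(h,\a)$ on a Rosenthal space $V$, take $A$ to be the image in $C(X)$ of (a dense subset of) the unit ball $B_V$ under the map $v\mapsto (x\mapsto\langle v,\a(x)\rangle)$. Since $\a$ is a topological embedding, $A$ separates points of $X$; it is norm bounded because $\a$ is bounded; it is $G$-invariant because $h$ is a co-homomorphism; and $\cls_p(A)\subset\mathcal F(X)$ follows from Proposition \ref{p:crit} (every $v^{**}\in V^{**}$, restricted to $B^*_{V}$, is fragmented) together with Lemma \ref{simple-fr}.2 applied to the continuous map $\a$, since the pointwise closure of $A$ lands among the $\a$-pullbacks of elements of $\overline{B_V}\subset V^{**}$. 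Conversely $(2)\Rightarrow(1)$ is exactly Theorem \ref{t:general}: a $G$-invariant Rosenthal family $A$ that separates points yields a faithful representation on a Rosenthal Banach space $V$ (the DFJP-type interpolation space), so $X$ is a WRN $G$-space.

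For the equivalences with $B^*$ and $P(X)$, the strategy is to transport the separating Rosenthal family $A\subset C(X)$ upward. The implications $(3)\Rightarrow(1)$ and $(4)\Rightarrow(1)$ are immediate: $X$ embeds $G$-equivariantly into both $B^*$ (via $x\mapsto\delta_x$) and $P(X)$, and the class of Rosenthal representable systems is closed under passing to closed $G$-subsystems (restrict the representation). For $(1)\Rightarrow(3)$, start from the separating $G$-invariant Rosenthal family $A\subset C(X)$ produced by $(1)\Leftrightarrow(2)$; by Proposition \ref{p:from X to B*} (in the $G$-invariant form — note $A\subset C(X)$ sits inside $C(B^*)$ via the canonical isometric inclusion $i$, and $A$'s $G$-invariance is preserved) the family $i(A)$ is a Rosenthal family for $B^*$, it is $G$-invariant, and it separates points of $B^*$ because the functionals coming from $C(X)$ separate measures. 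Applying the already-established $(2)\Rightarrow(1)$ to $B^*$ and the family $i(A)$ gives Rosenthal representability of $(G,B^*)$. Finally $(3)\Rightarrow(4)$ is again closedness under closed $G$-subsystems, since $P(X)$ is a closed $G$-invariant subset of $B^*$; alternatively one runs the same argument with $A$ restricted to $P(X)$, noting it still separates probability measures.

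The main obstacle I expect is the non-metrizable bookkeeping in $(1)\Rightarrow(2)$ and in the upward transport — specifically, verifying that ``$\cls_p(A)\subset\mathcal F(X)$'' genuinely holds for the family $A$ manufactured from $B_V$, rather than just for the countable subfamilies that the sequential characterizations (Fact \ref{t:Talagr}) directly control. Here the clean move is to invoke Fact \ref{t:Talagr}(5): it suffices that every countable subfamily of $A$ be a Rosenthal family for $X$, and a countable subfamily of $A$ pulls back through $\a$ to a countable subfamily of $\overline{B_V}$, which is a Rosenthal family for $B^*_V$ by Lemma \ref{l:crit} and Fact \ref{t:Talagr} again (the separable quotient argument). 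Once this sequential reduction is in place, Proposition \ref{p:from X to B*} (which itself ``depends on sequences only'', as its proof notes) carries everything across from $X$ to $B^*$ without any metrizability hypothesis, and the rest is the formal closedness properties already recorded in Lemma \ref{l:tame-prop}-style reasoning for the Rosenthal representable class.
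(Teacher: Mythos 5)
Your outline of $(1)\Leftrightarrow(2)$ matches the paper ($(2)\Rightarrow(1)$ is Theorem \ref{t:general}, and your construction of $A$ from $B_V$ via Proposition \ref{p:crit} and Lemma \ref{simple-fr}.2 is the right way to get the converse), and the downward implications $(3)\Rightarrow(4)\Rightarrow(1)$ via closed $G$-subsystems are exactly as in the paper. The problem is in $(1)\Rightarrow(3)$. You assert that $i(A)$ ``separates points of $B^*$ because the functionals coming from $C(X)$ separate measures.'' The \emph{whole} of $C(X)$ separates measures, but $A$ is only a point-separating subfamily of $C(X)$, and separating the points of $X$ is far weaker than separating elements of $C(X)^*$: two distinct measures can integrate every $f\in A$ (indeed every $f$ in the closed linear span of $A$) to the same value. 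For instance, on $X=\{a,b,c\}$ with trivial $G$ and $A=\{f\}$, $f(a)=0$, $f(b)=1$, $f(c)=2$, the family $A$ separates points of $X$ but $\tfrac12(\delta_a+\delta_c)$ and $\delta_b$ agree on $i(A)$. So $i(A)$ is a $G$-invariant Rosenthal family for $B^*$ (Proposition \ref{p:from X to B*} does give you that), but it need not separate the points of $B^*$, and the appeal to $(2)\Rightarrow(1)$ for $B^*$ breaks down.

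The missing idea, which is the actual content of the paper's proof of $(2)\Leftrightarrow(3)$, is to enlarge $A$ to a family whose \emph{linear span is norm dense} in $C(X)$ while staying Rosenthal: one adjoins the constant $1$, forms the $n$-fold pointwise products $A_n=A\cdot A\cdots A$, shows by a diagonal argument with Fact \ref{t:Talagr} that each $A_n$ is still a Rosenthal family (this is not covered by Proposition \ref{c:conv}, since products are not convex combinations), and sets $M=\bigcup_n 2^{-n}A_n$. By Stone--Weierstrass, $\spann(M)$ is dense in $C(X)$, so $M$ does separate the points of $B^*$; then Proposition \ref{p:from X to B*} and the already established $(2)\Rightarrow(1)$ applied to $(G,B^*)$ finish the argument. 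Without this multiplicative enlargement step your proof of $(1)\Rightarrow(3)$ is incomplete.
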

\begin{proof}
(1) $\Leftrightarrow$ (2):
Apply Theorem \ref{t:general} and Lemma \ref{l:crit}.

(2) $\Rightarrow$ (3):
Let $A$ be a $G$-invariant point separating Rosenthal family for
$X$. We will show that there exists a $G$-invariant point separating Rosenthal family for
$B^*$.
Produce inductively the sequence $A_n:= A_1 \cdot A_1 \cdots A_1$
($n$-times),
where $A_1:=A$. We can suppose that
$A \subset B_{C(X)}$ and contains the constant function $1$.
We can show by diagonal arguments
(use Fact \ref{t:Talagr} or Prop \ref{p:aft}) that $A_n$ is also a Rosenthal family for $X$. Furthermore (by the same results)
it is easy to show that the $G$-invariant family $M:=\cup_n
2^{-n}A_n$ is Rosenthal for $X$.
By Stone-Weierstrass theorem the algebra $span(M)$
(linear span of $M$ in $C(X)$) is dense in $C(X)$. This implies
that $M$ itself separates the points of $B^*$.
By Proposition \ref{p:from X to
B*}, $M$ is a ($G$-invariant) Rosenthal family for $B^*$.
Now we apply the part (1) $\Leftrightarrow$ (2) to the case of $(G,B^*)$.

(3) $\Rightarrow$ (4) and (4) $\Rightarrow$ (1) are obvious.
\end{proof}


\begin{thm} \label{t:B(X)tame}
Let $X$ be a compact $G$-space. Then
$(G,X)$ is tame iff $(G,B^*)$ (equivalently, $(G,P(X))$) is tame.
\end{thm}
\begin{proof}
It is enough to show that $(G,B^*)$ is tame for every tame system $(G,X)$.
By Theorem \ref{t:X-repres-iff} we know that $X$ is $G$-embedded into a $G$-product $\prod_{i \in I} X_i$ of
Rosenthal representable $G$-systems $X_i$. Corresponding $G$-system $B_i^*$ is Rosenthal representable, hence tame
by virtue of Theorems \ref{t:tame=approx} and \ref{t:1dir}. Now observe that
$B^*$ is naturally $G$-embedded into the $G$-product $\prod_{i \in I} B_i^*$
%
and use Lemma \ref{l:tame-prop}.
\end{proof}


\begin{thm} \label{t:mat}
Let $G$ be a topological group. The following conditions are equivalent:
\ben
\item $f: G \to \R$ is tame.
\item $f$ is a matrix coefficient of a continuous
co-representation of $G$ on a Rosenthal space.
That is, there exist: a Rosenthal space $V$, a continuous
co-homomorphism $h: G \to Iso(V)$, vectors $v \in V$ and $\psi
\in V^*$ such that $f(g)=\psi(vg)$ for every $g \in G$.
\een
\end{thm}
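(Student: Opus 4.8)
The plan is to deduce Theorem \ref{t:mat} from its two \emph{compact} counterparts: Theorem \ref{t:functrepr}, which represents a tame function on a compact $G$-space as a matrix coefficient, and Theorem \ref{t:1dir}, which gives the tameness of $(\Iso(V),B^*)$ for a Rosenthal space $V$. The bridge between functions on the (generally non-compact) group $G$ and functions on a compact $G$-space will be the cyclic $G$-system $X_f$ together with its canonical $G$-compactification $f_{\sharp}\colon G\to X_f$ (Section \ref{s:cycl}), $G$ being viewed as a $G$-space in the usual way.

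For $(1)\Rightarrow(2)$ I would proceed as follows. Assume $f$ is tame, so by Definition \ref{d:tameF} it comes from a tame compact $G$-system $Y$ via some $G$-compactification $\nu\colon G\to Y$. Since $f_{\sharp}\colon G\to X_f$ is the smallest $G$-compactification of $G$ from which $f$ comes (Section \ref{s:cycl}), there is a $G$-factor map $Y\to X_f$, and as the class of tame $G$-systems is closed under factors (Lemma \ref{l:tame-prop}), $X_f$ is tame. Hence $C(X_f)=Tame(X_f)$ by Corollary \ref{c:tame-char}, so the canonical function $f_0\in C(X_f)$ with $f=f_0\circ f_{\sharp}$ is a tame function on the compact $G$-space $X_f$, and Theorem \ref{t:functrepr} furnishes a continuous representation $(h,\a)$ of $(G,X_f)$ on a Rosenthal space $V$ and a vector $v\in V$ with $f_0(x)=\langle v,\a(x)\rangle$ for every $x\in X_f$. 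Putting $\psi:=\a(f_{\sharp}(e))\in V^*$, where $e$ is the unit of $G$, and using the $G$-equivariance of $f_{\sharp}$ and of $\a$ (so that $f_{\sharp}(g)=g\cdot f_{\sharp}(e)$ and $\a(g\cdot f_{\sharp}(e))=g\psi$), one computes
$$
f(g)=f_0(f_{\sharp}(g))=\langle v,g\psi\rangle=(g\psi)(v)=\psi(h(g)v)=\psi(vg)\qquad(g\in G),
$$
which exhibits $f$ as a matrix coefficient of $h$ on the Rosenthal space $V$.

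For $(2)\Rightarrow(1)$, suppose $f(g)=\psi(vg)$ with $h\colon G\to\Iso(V)$ a continuous co-homomorphism, $V$ Rosenthal, $v\in V$ and $\psi\in V^*$; after rescaling $\psi$ and $v$ we may assume $\|\psi\|\le1$. As each $h(g)$ is an isometry, the dual orbit $\{g\psi:g\in G\}$ lies in $B^*=B_{V^*}$, so its weak$^*$-closure $X:=\cls(G\psi)$ is a compact $G$-invariant subset of $B^*$. The flow $(G,B^*)$ is tame, since its enveloping semigroup embeds into that of $(\Iso(V),B^*)$, which is tame by Theorem \ref{t:1dir}; hence the subsystem $(G,X)$ is tame by Lemma \ref{l:tame-prop}. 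The orbit map $\phi\colon G\to X$, $\phi(g)=g\psi$, is a weak$^*$-continuous $G$-map with dense image, i.e. a $G$-compactification of $G$, while $f_0\colon X\to\R$, $f_0(\varphi)=\langle v,\varphi\rangle$, is continuous because $v\in V\subset V^{**}$. Since $f_0(\phi(g))=(g\psi)(v)=\psi(vg)=f(g)$, the function $f$ comes from the tame $G$-system $X$ in the sense of Definition \ref{d:coming} (which in particular forces $f$ to be right uniformly continuous), so $f$ is tame.

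Most of this is routine bookkeeping with the dual action and with equivariance, together with the easy verifications that $\phi$ and $f_0$ in $(2)\Rightarrow(1)$ are continuous $G$-maps. The one step that genuinely uses the structure theory — and the one I expect to be the main obstacle — is the assertion in $(1)\Rightarrow(2)$ that the cyclic $G$-system $X_f$ of a tame function is again tame; this combines the minimality of $X_f$ among the $G$-compactifications of $G$ from which $f$ comes with the factor-invariance of tameness, and it is precisely what makes the identity $C(X_f)=Tame(X_f)$, hence the applicability of Theorem \ref{t:functrepr}, available.
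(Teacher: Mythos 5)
Your argument is correct and follows essentially the same route as the paper: for $(1)\Rightarrow(2)$ reduce to a tame compactification of $G$ and apply Theorem \ref{t:functrepr} with $\psi:=\a(\nu(e))$, and for $(2)\Rightarrow(1)$ invoke Theorem \ref{t:1dir} via the orbit closure of $\psi$ in $B^*$. The detour through the cyclic system $X_f$ is harmless but unnecessary, since Theorem \ref{t:functrepr} can be applied directly to $f_0$ on the given tame compactification.
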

\begin{proof} (1) $\Rightarrow$ (2):
Choose a tame $G$-compactification $\nu: G \to X$ of $G$
and a continuous function $f_0: X \to \R$ such that $f=f_0 \circ \nu$.
Now we can apply Theorem \ref{t:functrepr} to $f_0$ getting the
desired $V$ and vectors $v$ and $\psi:=\a(\nu(e))$.

(2) $\Rightarrow$ (1): Apply Theorem \ref{t:1dir}.
\end{proof}


\sk
\subsection{Compact spaces in the second dual of Rosenthal spaces}

Again we remind the reader that
a compact topological space $K$ is {\it Rosenthal}
if it is homeomorphic to a pointwise compact
subset of the space $\calB_1(X)$ of functions of the first Baire
class on a Polish space $X$.

\begin{defn} \label{d:RosTypes}
\ben
\item
We say that a compact space $K$ is \emph{strongly Rosenthal} if
$K$ is a subspace of $\calB_1(X)$ with compact metrizable $X$.
\item
We say that a compact space $K$ is an \emph{admissible Rosenthal
compactum}
(or simply \emph{admissible}) if there exists a
compact metric space $X$ and a
Rosenthal family $F$ for $X$ such that $K \subset \cls_p(F)$.

\een
\end{defn}

In the second definition it follows that $K \subset \cls_p(F) \subset {\mathcal B}_1(X)$.
Hence every admissible compactum is a strongly Rosenthal
compactum.
Clearly every strongly Rosenthal compact space is Rosenthal.

Every subset $F \subset C(X)$ is norm separable for
a compact metric $X$.
Hence such an $F$ is also separable with respect to the pointwise
convergence topology. Thus
in Definition
\ref{d:RosTypes}.2, we can assume that $F$ is countable.

%
%




Pol's example, mentioned in the introduction, shows that
not every separable Rosenthal compactum is strongly Rosenthal
(and a fortiori also not admissible).

\begin{lem} \label{l:prod}
The classes of Rosenthal, strongly Rosenthal and admissible
compact spaces are closed under the operations of
passing to closed subspaces and taking countable products.
\end{lem}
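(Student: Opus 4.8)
The plan is to treat the three classes uniformly as far as possible and to reduce each closure property to a bookkeeping statement about the indexing data (the Polish or metrizable space $X$, and, in the admissible case, the Rosenthal family $F$). For \textbf{closed subspaces} there is essentially nothing to do: if $K$ is homeomorphic to a pointwise compact subset of $\calB_1(X)$ and $K_0 \subset K$ is closed, then $K_0$ is again a pointwise compact subset of $\calB_1(X)$, so $K_0$ is Rosenthal; if in addition $X$ is compact metric we get strongly Rosenthal for free; and if $K \subset \cls_p(F)$ for a Rosenthal family $F$ on a compact metric $X$, then $K_0 \subset K \subset \cls_p(F)$, so $K_0$ is admissible. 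No new space or family needs to be constructed.

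For \textbf{countable products} the idea is the standard ``disjoint union of the ground spaces'' trick. Given $K_i$ homeomorphic to a pointwise compact subset of $\calB_1(X_i)$ with $X_i$ Polish (resp. compact metric), I would form a single Polish (resp. compact metric) space $X$ out of the $X_i$. In the Polish case one can take $X = \bigsqcup_i X_i$, the topological disjoint sum, which is Polish; in the compact metric case one takes instead a convergent-sequence-of-compacta construction, e.g. realize $X$ as $\big(\bigsqcup_i (X_i \times \{1/i\})\big) \cup \{*\}$ inside a metric space, so that $X$ is compact and metrizable (a countable one-point-type compactification of the $X_i$). Each $\phi = (\phi_i)_{i} \in \prod_i \calB_1(X_i)$ then determines a function $\hat\phi$ on $X$ by declaring $\hat\phi$ to be (a scaled copy of) $\phi_i$ on the $i$-th piece and, in the compact case, $0$ at the added point $*$; a uniformly bounded pointwise-convergent argument (as in the proof that $T(f)\in\calB_1(B^*)$ in the discussion preceding Lemma \ref{l:seq-cont}) shows $\hat\phi \in \calB_1(X)$, and the assignment $\phi \mapsto \hat\phi$ is a homeomorphic embedding of $\prod_i \calB_1(X_i)$-with-product-topology onto a pointwise-closed subspace of $\calB_1(X)$. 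Restricting to $\prod_i K_i$ gives the desired embedding, proving the product is Rosenthal (resp. strongly Rosenthal).

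For the \textbf{admissible} case one must additionally carry the families along. Here I would use that in Definition \ref{d:RosTypes}.2 the families may be taken countable, and normalize: $K_i \subset \cls_p(F_i)$ with $F_i \subset C(X_i)$ a countable norm-bounded Rosenthal family, which after rescaling I may assume has norm bound $2^{-i}$ and contains $0$. Transport each $F_i$ to $\widehat{F_i} \subset C(X)$ via the piecewise construction above (continuity on the compact metric $X$ is what forces the normalization and the vanishing at $*$), and set $F := \bigcup_i \widehat{F_i}$, or rather the family obtained by also throwing in ``coordinatewise combinations'' so that $\cls_p(F)$ surjects onto $\prod_i \cls_p(F_i)$. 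One checks $F$ is a Rosenthal family for $X$ by Fact \ref{t:Talagr}: any sequence in $F$ either lies (up to a subsequence) in finitely many of the pieces, where precompactness in $\R^X$ follows from precompactness of the $F_i$ in $\R^{X_i}$, or spreads out across infinitely many pieces, where the norm bounds $2^{-i}\to 0$ force pointwise convergence to the zero/limit function directly; a diagonal argument assembles a convergent subsequence. Finally $\prod_i K_i \hookrightarrow \prod_i \cls_p(F_i)$ sits inside $\cls_p(F)$ under the transport map, so the product is admissible. The \textbf{main obstacle} is the compact-metric bookkeeping: one must choose the glued space $X$ and the scaling so that (i) $X$ stays compact metric, (ii) each transported $F_i$ stays inside $C(X)$ (hence the need to kill the functions near the attaching point $*$), and (iii) $\cls_p$ of the glued family still maps onto the product of the individual closures rather than something smaller — this last point requires adding enough ``mixed'' functions to $F$ while keeping it a Rosenthal family, and the diagonal/Fact \ref{t:Talagr} verification is the only place where genuine (if routine) work is needed.
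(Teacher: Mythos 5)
Your plan matches the paper's proof essentially step for step: closed subspaces are immediate, the Rosenthal case uses the disjoint sum of the Polish spaces with piecewise-defined Baire~1 functions, the strongly Rosenthal and admissible cases pass to the one-point compactification with the functions set to $0$ at the added point, and the ``mixed'' functions you call for in the admissible case are realized in the paper as the finitely supported tuples $(f_1,\dots,f_k,0_{k+1},0_{k+2},\dots)$ with $f_i\in F_i$, which form a countable dense subset of $\prod_n \cls_p(F_n)$. Since that dense set sits inside the compactum $j^*(K)\subset \calB_1(X^*)$, its pointwise closure is automatically contained in ${\mathcal F}(X^*)$, so the sequence-by-sequence verification via Fact~\ref{t:Talagr} that you anticipate as the main remaining work is not actually needed (only norm-boundedness, which your $2^{-i}$ normalization supplies).
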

\begin{proof}
The hereditarily property of each of these classes is obvious.
In order to see that the countable product $K:=\prod_n K_n$ of
Rosenthal compact spaces $K_n$ is again Rosenthal we consider the
topological (disjoint) sum $X:= \coprod_{n \in \N} X_n$,
where $X_n$ is a Polish space for which $K_n \subset
\calB_1(X_n)$. Then $K$ can be embedded into $\calB_1(X)$ as
follows.
For each element
$$
f:=(f_1, f_2, \cdots ) \in \prod K_n = K
$$
there exists a uniquely defined function $j(f): X \to \R$ such
that the restriction of $j(f)$ on $X_n$ is exactly $f_n$. Clearly,
$j(f)$ is a Baire 1 function on $X$.
This defines the continuous map $j: K \to \calB_1(X)$.
Since $j$ is injective and $K$ is compact we conclude that
$j$ is a topological embedding.

Suppose now that each $K_n$ is strongly Rosenthal. Then,
by definition, we can
assume in addition that each $X_n$ as above
is a compact metric space. Now
it is easy to see that $K$ admits a topological embedding into
$\calB_1(X^*)$, where $X^*:=X \cup \{\infty\}$ is the one point
compactification of $X=\coprod_{n \in \N} X_n$. In this case we
define $j^*: K \to \calB_1 (X^*)$ by $j^*(f)(\infty)=0$ and
$j^*(f)(x)=j(f)(x)$ for every $x \in X$. Then again $j^*$ is well
defined and it embeds $K$ into $\calB_1 (X^*)$.

Finally we consider the case where each $K_n$ is admissible.
As in the second case we have the topological embedding
$$
j^*: K \hookrightarrow \calB_1 (X^*)
$$
We have to show that there exists a family $F \subset C(X^*)$ such
that $j^*(K) \subset \cls_p(F)$. For each $n \in \N$ fix a
countable subset $F_n \subset C(X_n)$ such that $K_n \subset
\cls_p (F_n) $. It is enough to show our assertion in the case
where $\cls_p (F_n) = K_n$. For each $k \in \N$ consider the
elements of the type
$$
f:=(f_1, f_2, \cdots, f_k, 0_{k+1}, 0_{k+2}, \cdots ) \in \prod
K_n = K,
$$
where $f_i \in F_i$ for every $i \leq k$ and each $0_{k+m}$
denotes
the constant zero function on $X_{k+m}$
(again without restriction of generality
we can assume that $0_{t} \in F_t$ for every $t \in \N$).
Varying $k \in \N$ and $f_i \in F_i$
with $i \leq k$ we get a countable subset $F_0 \subset K$. Clearly
this subset is dense in the product space $K=\prod_{n \in \N} K_n$.
It is easy to see that its image $F:=j^*(F_0)$ is the
required family. That is, $F \subset C(X^*)$ and $j^*(K) \subset
\cls_p(F)$.
\end{proof}

\begin{prop} \label{p:EisTame}
Let $X$ be a compact metric $G$-space.
The following conditions are equivalent:
\ben
\item
$(G,X)$ is a tame system.
\item
$E(X)$ is an
admissible Rosenthal compactum.
\item
$E(X)$ is a Rosenthal compactum.
\een
\end{prop}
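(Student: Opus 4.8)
The plan is to run the cycle $(1)\Rightarrow(2)\Rightarrow(3)\Rightarrow(1)$, where only the implication $(1)\Rightarrow(2)$ requires real work. For $(2)\Rightarrow(3)$ it suffices to recall (the remarks following Definition \ref{d:RosTypes}) that an admissible compactum is strongly Rosenthal and that a strongly Rosenthal compactum is Rosenthal, while the separability asserted in (2) is carried over verbatim. For $(3)\Rightarrow(1)$: a Rosenthal compactum is Fr\'echet by Fact \ref{BFT}.(1), so it cannot contain a homeomorphic copy of $\beta\N$; hence in the dynamical BFT dichotomy (Fact \ref{f:D-BFT}) the first alternative holds for $(G,X)$, which is exactly the statement that the metric system $(G,X)$ is tame (equivalently, one may invoke Fact \ref{GMU}).

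For $(1)\Rightarrow(2)$, since $X$ is compact metric the Banach space $C(X)$ is separable, so I fix a sequence $\{f_n\}_{n\in\N}\subset C(X)$ which separates the points of $X$. For each $n$, in the notation of Section \ref{s:env}, let $q_{f_n}\colon E(X)\to E^{f_n}=\cls_p(f_nG)\subset\R^X$, $p\mapsto p_{f_n}$, be the canonical continuous surjection, and form the diagonal map $\Phi\colon E(X)\to\prod_{n}E^{f_n}$, $\Phi(p)=(p_{f_n})_{n}$. Since $p_{f_n}(x)=f_n(px)$ and $\{f_n\}$ separates the points of $X$, the map $\Phi$ is injective; being continuous with compact domain and Hausdorff range, it is a topological embedding, and its image, being compact, is closed in $\prod_n E^{f_n}$. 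Now each orbit $f_nG$ is a bounded subset of $C(X)$ (indeed $\|f_ng\|_\infty=\|f_n\|_\infty$), and because $(G,X)$ is tame every $f\in C(X)$ is a tame function (Corollary \ref{c:tame-char}), so by Proposition \ref{p:tame-f} the orbit $f_nG$ is a Rosenthal family for the compact metric space $X$. Therefore $E^{f_n}=\cls_p(f_nG)$ is, directly from Definition \ref{d:RosTypes}.2 (with base space $X$ and family $f_nG$), an admissible compactum.

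It remains to assemble the pieces. By Lemma \ref{l:prod} the countable product $\prod_{n}E^{f_n}$ of admissible compacta is admissible, and, again by Lemma \ref{l:prod}, so is its closed subspace $\Phi(E(X))\cong E(X)$; moreover $E(X)$ is separable since, $(G,X)$ being tame, Fact \ref{f:D-BFT} places it in the first alternative, where $E(X)$ is a separable Rosenthal compactum. This establishes (2). The main obstacle, and the place I would be most careful, is the verification in $(1)\Rightarrow(2)$ that $\Phi$ is genuinely a closed embedding (so that stability of the admissible class under closed subspaces applies) and that each factor $\cls_p(f_nG)$ really is admissible; these rest respectively on $\{f_n\}$ separating the points of $X$ and on the identification $\cls_p(f_nG)\subset\mathcal F(X)=\calB_1(X)$ furnished by Proposition \ref{p:tame-f}. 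Everything else is formal manipulation of the stability properties recorded in Lemma \ref{l:prod} and of the BFT dichotomy.
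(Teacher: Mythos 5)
Your proposal is correct and follows essentially the same route as the paper: for $(1)\Rightarrow(2)$ the paper likewise takes a countable point-separating family $\{f_m\}\subset C(X)$, observes each $E^{f_m}=\cls_p(f_mG)$ is admissible because $f_mG$ is a Rosenthal family, and embeds $E(X)$ into the countable product $\prod_m E^{f_m}$ using Lemma \ref{l:prod}, while the remaining implications are read off from the definitions and the BFT dichotomy exactly as you do. Your explicit verification that the diagonal map is a closed embedding and your appeal to Fact \ref{f:D-BFT} for the separability in (2) only make explicit details the paper leaves implicit.
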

\begin{proof}

(1) $\Rightarrow$ (2):  Let $(G,X)$ be a tame system. Then every
continuous function $f \in C(X)$ is tame. This means that $fG$ is
a Rosenthal family for $X$. Then the compact space
$E^f:=\cls_p(fG)$ is a subset of $\calB_1(X)$ (Proposition
\ref{p:tame-f} and Corollary \ref{c:fr=B1}.2).
%
So the compactum $E^f$ is admissible.
Since $X$ is a metrizable compact space one may choose a countable
set of functions $\{f_m\}_{m \in \N}$ which separates the points
in $X$. Then $E(X)$ can be naturally embedded into the countable
product $K:=\prod_m E^{f_m}$ which is admissible by Lemma
\ref{l:prod}.

(2) $\Rightarrow$ (3): is clear.

(1) $\Leftrightarrow$ (3): Follows directly from the definitions.
\end{proof}

We have the following related purely topological result:

\begin{thm} \label{t:tame-comp} Let $K$ be a compact space.
The following conditions are equivalent:
\ben
\item
$K$ is an admissible
Rosenthal
compactum.
\item  $K$ is
homeomorphic to
a weak$^*$ closed
subset in the second dual of a separable Rosenthal Banach space $V$.
\een
\end{thm}
\begin{proof}
(1) $\Rightarrow$ (2):
Let $K$ be an admissible compactum.
By Definition \ref{d:RosTypes}
there exists a compact metric space $X$ and a Rosenthal
family $F \subset C(X)$ such that $K \subset
\cls_p(F) \subset {\mathcal B}_1(X)$. We have to show that $K$ is
homeomorphic to a weak$^*$ closed subset in the second
dual of a separable Rosenthal Banach space $V$. It is enough to
establish this for the case of $K=\cls_p(F)$. But this fact
follows directly from Theorem \ref{t:general}.2
when one considers a trivial (identity) $G$-action on $X$.

(2) $\Rightarrow$ (1):
 We have to show that $K$ is an admissible
compactum. It is enough to show this for the particular case where
$K:=B^{**}:=B_{V^{**}}$, the unit ball in the second dual. Since
$V$ is separable, $X:=B^*$, the weak$^*$ compact unit ball in
$V^*$ is a metrizable compact space. By our assumption $V$ is a
Rosenthal space. Then by Fact \ref{t:Ros}.3, $K=B^{**}$ is
naturally embedded into $\calB_1(X)$ with $X:=B^*$. By Goldstine's
theorem the unit ball $B:=B_V$ of $V$ is weak$^*$-dense in
$B^{**}$. At the same time $B$ can be treated as a (bounded)
subset of $C(X)$. Thus, $B$ is a Rosenthal family for $X$. Hence
the compactum $K$ is admissible in the sense of Definition
\ref{d:RosTypes}.2.
%
\end{proof}



\bibliographystyle{amsplain}

\begin{thebibliography}{10}

%

\bibitem{AAB}
E. Akin, J. Auslander, and K. Berg, {\em Almost equicontinuity and
the enveloping semigroup\/}, Topological dynamics and
applications, Contemporary Mathematics  {\bfseries 215}, a volume
in honor of R.~Ellis, 1998, pp. 75-81.

\bibitem{BFT} J. Bourgain, D.H. Fremlin and M. Talagrand,
{\it Pointwise compact sets in Baire-measurable functions}, Amer.
J. of Math., \textbf{100:4} (1977), 845-886.

\bibitem{Bo}
R.D. Bourgin, {\it Geometric Aspects of Convex Sets with the
Radon-Nikod\'ym Property}, Lecture Notes in Math. {\bf 993},
Springer-Verlag, 1983.

\bibitem{DFJP}
W.J. Davis, T. Figiel, W.B. Johnson and A. Pelczy\'nski, {\em
Factoring weakly compact operators\/}, {\it J. of Funct. Anal.},
{\bf 17} (1974), 311-327.

\bibitem{Dulst}
D. van Dulst, \emph{Characterization of Banach spaces not
containing $l^1$}, Centrum voor Wiskunde en Informatica,
Amsterdam, 1989.


\bibitem{Ellis}
R. Ellis, {\em Lectures on Topological Dynamics\/}, W. A.
Benjamin, Inc.\ , New York, 1969.


\bibitem{EN}
R. Ellis and M. Nerurkar, {\em Weakly almost periodic flows\/},
Trans.\ Amer.\ Math.\ Soc.\ {\bfseries 313}, (1989), 103-119.

\bibitem{Eng}
R. Engelking, {\em  General topology\/}, 
Heldermann Verlag, Berlin, 1989.

\bibitem{Fa}
M. Fabian, {\em Gateaux differentiability of convex functions and
topology. Weak Asplund spaces\/},\ Canadian Math.\ Soc.\ Series of
Monographs and Advanced Texts, A Wiley-Interscience Publication,
New York, 1997.

\bibitem{Gl-tame}
E. Glasner, {\it On tame dynamical systems}, Colloq. Math.
\textbf{105} (2006), 283-295.

\bibitem{G3} E. Glasner, {\it The structure of tame minimal
dynamical systems}, Ergod. Th. and Dynam. Sys.
{\bf 27}, (2007), 1819-1837


\bibitem{G4} E. Glasner, {\it Enveloping semigroups in topological
dynamics}, Topology Appl. {\bf 154},  (2007), 2344--2363.

\bibitem{GM}
E. Glasner and M. Megrelishvili, \emph{Hereditarily non-sensitive dynamical systems
and linear representations}, Colloq. Math.,
\textbf{104} (2006), no. 2, 223--283.

\bibitem{GM-suc}
E. Glasner and M. Megrelishvili, {\it New algebras of functions on
topological groups arising from $G$-spaces},
Fundamenta Math., \textbf{201} (2008), 1-51.

\bibitem{GM-fixed}
E. Glasner and M. Megrelishvili,
\emph{On fixed point theorems and nonsensitivity}, Israel J. of Math. (to appear).

\bibitem{GMU} E. Glasner, M. Megrelishvili and V.V. Uspenskij,
\emph{On metrizable enveloping semigroups},
Israel J. of Math. \textbf{164} (2008), 317-332.

\bibitem{GW1}
E. Glasner and B. Weiss, {\em Sensitive dependence on initial
conditions\/}, Nonlinearity  {\bfseries 6}, (1993), 1067-1075.


\bibitem{Godefroy}
G. Godefroy, {\it Compacts de Rosenthal}, Pacific J. Math., {\bf
91} (1980), 293-306.

\bibitem{Guil}
M.M. Guillermo, \emph{Indice de K-determinación de espacios
topológicos y $\sigma$-fragmentabilidad de aplicaciones,} Tesis
Doctoral, Universidad de Murcia, Spain, 2003.

\bibitem{Huang}
W. Huang, {\em Tame systems and scrambled pairs under an abelian
group action\/}, Ergod. Th. Dynam. Sys. {\bf 26} (2006),
1549-1567.

\bibitem{Ja}
R.C. James, \emph{A separable somewhat reflexive Banach space with
nonseparable dual}, Bull. Amer. Math. Soc., \textbf{80} (1974),
738-743.

\bibitem {JOPV}
J.E. Jayne, J. Orihuela, A.J. Pallares and G. Vera, {\it
$\sigma$-fragmentability of multivalued maps and selection
theorems}, J. Funct. Anal. {\bf 117} (1993), no. 2, 243--273.

\bibitem{JR}
J.E. Jayne and C.A. Rogers, {\it Borel selectors for upper
semicontinuous set-valued maps}, Acta Math. {\bf 155} (1985),
41-79.

\bibitem{Kech}
A.S. Kechris, {\em Classical descriptive set theory\/}, Graduate
texts in mathematics, {\bfseries 156}, 1991, Springer-Verlag.

\bibitem{KL} D. Kerr and H. Li,
{\it Independence in topological and $C^*$-dynamics},
Math. Ann. {\bf 338}  (2007), 869-926.

\bibitem{Ko}
A. K\"{o}hler, {\em Enveloping semigrops for flows}, Proceedings
of the Royal Irish Academy, {\bf 95A} (1995), 179-191.

\bibitem{LiSt}
J. Lindenstrauss and C. Stegall, E\emph{xamples of separable spaces
which do not contain $l_1$ and whose duals are nonseparable,
Studia Math.}, \textbf{54} (1975), 81-105.

\bibitem{Mefr}
M. Megrelishvili, {\it Fragmentability and continuity of semigroup
actions}, Semigroup Forum, {\bf 57} (1998), 101-126.

\bibitem{Meop}
M. Megrelishvili, {\it Operator topologies and reflexive
representability}, In: "Nuclear groups and Lie groups" Research
and Exposition in Math. series, vol. {\bf 24}, Heldermann Verlag
Berlin, 2001, 197-208.

\bibitem{Menz}
M. Megrelishvili, {\em Fragmentability and representations of
flows\/}, Topology Proceedings, {\bfseries 27:2} (2003), 497-544.
See also: www.math.biu.ac.il/ ${\tilde{}}$ megereli.

\bibitem{MN}
E. Michael and I. Namioka, {\em Barely continuous functions\/},
Bull. acad. Polon. Sci. Ser. Sci. Math. Astron. Phys., {\bf 24}
(1976), 889-892.

\bibitem{N-jct}
I. Namioka, {\it Separate continuity and joint continuity},
Pacific. J. Math., {\bf 51} (1974), 515-531.

\bibitem{N}
I. Namioka, {\em Radon-Nikod\'ym compact spaces and
fragmentability\/}, Mathematika {\bfseries 34}, (1987), 258-281.

\bibitem{NP}
I. Namioka and R.R. Phelps,
{\it Banach spaces which are Asplund spaces}, Duke Math. J., {\bf
42} (1975), 735-750.

\bibitem{OR}
E. Odell and H. P. Rosenthal, {\em A double-dual characterization
of separable Banach spaces containing $l\sp{1}$}, Israel J. Math.,
{\bf 20} (1975), 375-384.

\bibitem{Pol}
R. Pol, \emph{Note on Compact Sets of First Baire Class
Functions}, Proc. of AMS, \textbf{96}, No. 1. (1986), pp. 152-154.

\bibitem{RSU}
L.H. Riddle, E. Saab and J.J. Uhl,
\emph{Sets with the weak
Radon-Nikodym property in dual Banach spaces}, Indiana Univ. Math.
J., \textbf{32} (1983), 527-541.

\bibitem{Ros0} H.P. Rosenthal,
\emph{A characterization of Banach spaces
containing $l_1$}, Proc. Nat. Acad. Sci. U.S.A., \textbf{71}
(1974), 2411-2413.

\bibitem{Ros1} H.P. Rosenthal,
\emph{Point-wise compact subsets of the
first Baire class}, Amer. J. of Math., \textbf{99:2} (1977),
362-378.

\bibitem{Ros2} H.P. Rosenthal,
\emph{Some recent discoveries in the
isomorphic theory of Banach spaces}, Bull. of AMS, \textbf{84:5}
(1978), 803-831.

\bibitem{SS}
E. Saab and P. Saab, \emph{A dual geometric characterization of
Bancah spaces not containing $l_1$}, Pacific J. Math.,
\textbf{105:2} (1983), 413-425.

\bibitem{Tal}
M. Talagrand, \emph{Pettis integral and measure theory,}
Mem. AMS No. \textbf{51} (1984).

\bibitem{To-b}
S. Todor\u{c}evi\'{c}, {\em Topics in topology}, Lecture Notes in
Mathematics, {\bfseries 1652}, Springer-Verlag, 1997.




\end{thebibliography}

\end{document}